\documentclass[reqno]{amsart}
\pdfoutput 1

\usepackage{lineno}

\usepackage[ruled,vlined,norelsize,noend]{algorithm2e}

\usepackage{tikz-cd}
\usetikzlibrary{calc}

\usepackage[all,hyperref,numberbysection,mathbb]{bi-discrete}

\usepackage[backend=biber,maxbibnames=5,maxalphanames=5,style=alphabetic,bibencoding=utf8,giveninits,url=false,isbn=false,doi=true]{biblatex}
\addbibresource{Complexe.bib}
\renewbibmacro{in:}{}
\AtBeginBibliography{\small}

\usepackage{listings}
\usepackage{color} 

\definecolor{codegreen}{rgb}{0,0.6,0}
\definecolor{codegray}{rgb}{0.5,0.5,0.5}
\definecolor{codepurple}{rgb}{0.58,0,0.82}
\definecolor{backcolour}{rgb}{0.95,0.95,0.92}

\lstdefinestyle{mystyle}{
    backgroundcolor=\color{backcolour},   
    commentstyle=\color{codegreen},
    keywordstyle=\color{magenta},
    numberstyle=\tiny\color{codegray},
    stringstyle=\color{codepurple},
    basicstyle=\ttfamily\footnotesize,
    breakatwhitespace=false,         
    breaklines=true,                 
    captionpos=b,                    
    keepspaces=true,                 
    numbers=left,                    
    numbersep=5pt,                  
    showspaces=false,                
    showstringspaces=false,
    showtabs=false,                  
    tabsize=2
}

\lstset{style=mystyle}

\usepackage{tikz}
\usepackage{pgfplots}
\pgfplotsset{
  width=.65\linewidth,
  axis background/.style={fill=black!5!white},
  grid style={densely dotted,semithick},
  legend style={
    legend columns=1,
    legend pos=outer north east
  },
  compat=newest 
}
\pgfplotscreateplotcyclelist{MyColors}{%
    {red,mark = *,every mark/.append style={solid,scale=0.4,fill=red}},
    {teal,mark = x,every mark/.append style={solid,scale=0.4,fill=teal}},
    {blue,mark = square*,every mark/.append style={solid,scale=0.4,fill=blue}},
{dotted,red,mark = *,every mark/.append style={solid,scale=0.4,fill=red}},
    {dotted,teal,mark = x,every mark/.append style={solid,scale=0.4,fill=teal}},
    {dotted,blue,mark = square*,every mark/.append style={solid,scale=0.4,fill=blue}},
{dash dot,red,mark = *,every mark/.append style={solid,scale=0.4,fill=red}},
    {dash dot,teal,mark = x,every mark/.append style={solid,scale=0.4,fill=teal}},
    {dash dot,blue,mark = square*,every mark/.append style={solid,scale=0.4,fill=blue}},}

\usepackage[font=small]{caption}

\providecommand{\dy}{\, \mathrm{d}y}
\providecommand{\dx}{\, \mathrm{d}x}
\providecommand{\ds}{\, \mathrm{d}s}
\providecommand{\dt}{\, \mathrm{d}t}
\providecommand{\dz}{\, \mathrm{d}z}
\providecommand{\dr}{\, \mathrm{d}r}
\providecommand{\tria}{\mathcal{T}}

\providecommand{\midpoint}{\textup{mid}}

\newcommand{\Vander}{\mathrm{V}}
\newcommand{\edge}{f}
\newcommand{\edges}{\mathcal{F}}
\newcommand{\nodes}{\mathcal{N}}
\newcommand{\bfcurl}{\operatorname{\mathbf{curl}}}

\newcommand{\Matb}{\mathtt{b}}
\newcommand{\Matc}{\mathtt{c}}

\newcommand{\Zspace}{Z}  
\newcommand{\Zsing}{Z_s} 
\newcommand{\Zred}{Z_r}  
\newcommand{\Vspace}{V}  
\newcommand{\Qspace}{Q}  
\newcommand{\Vred}{V_r}  
\newcommand{\Qred}{Q_r}  

\newcommand{\Vspacediv}{V_{\divergence}} 
 

\providecommand{\Zienkiewicz}{Zienkiewicz\xspace} 
\providecommand{\Matlab}{MATLAB\xspace}

\begin{document}

\author[L.\ Diening]{Lars Diening}
\author[J.\ Storn]{Johannes Storn}
\author[T.\ Tscherpel]{Tabea Tscherpel}

\address[L.\ Diening]{Department of Mathematics, Bielefeld University, Postfach 10 01 31, 33501 Bielefeld, Germany}
\email{lars.diening@uni-bielefeld.de}
\address[J.\ Storn]{Faculty of Mathematics \& Computer Science, Institute of Mathematics, Leipzig University, Augustusplatz 10, 04109 Leipzig, Germany}
\email{johannes.storn@uni-leipzig.de}
\address[T.\ Tscherpel]{Department of Mathematics, Technische Universit\"at Darmstadt, Dolivostra\ss e 15, 64293 Darmstadt, Germany}
\email{tscherpel@mathematik.tu-darmstadt.de}

\subjclass[2020]{
	65D32,  
	65N30, 
 	76D07, 
	76M10
}

\keywords{quadrature, conforming finite elements, rational functions, singular \Zienkiewicz element, Guzmán-Neilan element, MATLAB, structure preservation, exactly divergence free}

\title[Exact integration for rational finite elements]{Exact Integration for singular Zienkiewicz and Guzm\'an--Neilan Finite Elements with Implementation}

\begin{abstract} 
We develop a recursive integration formula for a class of rational polynomials in 2D. 
Based on this, we present implementations of finite elements that have rational basis functions. 
Specifically, we provide simple \Matlab implementations of the singular \Zienkiewicz and the lowest-order \Guzman{}--Neilan finite element in 2D. 
\end{abstract}

\maketitle


\section{Introduction}

\noindent The majority of finite elements rely on polynomial functions because they are simple to implement by use of exact integration and differentiation. 
However, achieving certain types of conformity, such as $H^2$-conformity or exact divergence constraints, can be challenging with polynomials; 
ensuring these properties typically requires a large number of degrees of freedom when working with polynomial functions. 
By using rational functions these constraints can be satisfied with considerably fewer degrees of freedom. 
However, to the best of our knowledge, no exact integration formulas for rational polynomials on triangles have been established in the literature. 
As a result, finite elements based on rational basis functions have not been  studied extensively. 
In this work we develop an exact integration in 2D for a class of rational polynomials and we apply it to several rational finite elements. 
This allows for a straightforward exact implementation, and paves  the way for further investigations of rational finite elements. 

\subsubsection*{Approximation with rational functions} 
For certain problems rational functions offer better approximation properties than polynomials, see, e.g.,~\cite{Newman1964,DeVoreYu1986}. 
Indeed, for functions with certain singularities this is evident. 
Designing algorithms for rational approximations is a recent topic of research~\cite{AustinKrishnamoorthyLeyfferEtAl2021,TrefethenNakatsukasaWeideman2021,BoulleHerremansHuybrechs2024}. 
Compared to the polynomial case many questions are still open. 
Indeed, exact quadrature formulas have been developed mostly for 1D problems, e.g.,~\cite{Gautschi1993, Gautschi2001}, for 2D circular domains without radial dependence~\cite{BultheelGonzalezVeraHendriksenEtAl2001, DeckersBultheelPerdomoPio2011} and for tensorial problems.
A prominent example for the approximation with rational functions are the so-called NURBS used in isogeometric analysis ~\cite{HughesCottrellBazilevs2005}, see also~\cite{BuffaGantnerGiannelliEtAl2022}. 
Aside from few exceptions~\cite{ManniSpeleers2016}, they are based on a tensorial structure.  
For the quadrature usually a Duffy transform~\cite{SauterSchwab1997} is employed to resolve the singularities, and then quadrature formulas in 1D are applied in the tensorial form. 
This approach leads to inexact quadrature, but it offers exponential convergence in the number of quadrature points, see, e.g.,~\cite{Auricchio2012,DoelzHarbrechtKurzEtAl2020}. 
For complex geometries also triangular meshes and the use of rational functions on them are of interest. 
For those, in~\cite{BoulleHerremansHuybrechs2024} the problem of approximation has been addressed by splitting triangles into three quadrilaterals and by transforming them to the unit cube. 
However, for multivariate rational functions without tensor product structure, exact numerical integration seems to be an open problem, even in the case of dimension $d=2$.

\subsubsection*{Finite element methods with rational functions}
Finite element spaces based on low order polynomials fail to satisfy certain conformity properties. 
For example, the \Zienkiewicz element \cite[Fig.~2.2.16]{Ciarlet2002} is not $H^2$-conforming, and for the Taylor--Hood element \cite[Sec.~8.8]{Boffi2013} the corresponding space of discretely divergence-free velocity functions is not exactly divergence-free. 
However, in some situations such structure preservation in the form of conformity is crucial. 
Indeed, the lack of $H^2$-conformity can cause difficulties in the approximation of solutions to the biharmonic equation; see~\cite{IronsRazzaque72,Shi84} for the failure of convergence of the \Zienkiewicz element on certain meshes.  
To obtain $H^2$-conformity with polynomial basis functions requires a high number of degrees of freedom. 
For example, both the Argyris element~\cite{ArgyrisFriedScharpf1968}, see also~\cite[Ch.~2.2.2]{Ciarlet2002} and for a hierarchical version  \cite{CarstensenHu21}, and the Morgan--Scott ``element''~\cite{MorganScott1975} is $C^1$-conforming, but its definition depends on certain mesh conditions. Both elements use polynomials of 5th order, and hence have $21$ degrees of freedom per triangle.
On the other hand, the $H^2$ conforming singular \Zienkiewicz element~\cite[Sec.~10.10]{Zienkiewicz1971}, see also~\cite[Ch.~6.6.1]{Ciarlet2002} and Section~\ref{sec:Zienkiewicz} below, has only $12$ degrees of freedom per triangle due to the use of rational basis functions. 

Divergence-free subspaces of $H^1$ and $H^2$-conforming finite element spaces are related by a de~Rham complex.  More precisely, the curl of $H^2$-conforming elements yields exactly divergence-free $H^1$-conforming elements. 
Therefore, mixed methods for incompressible fluid equations with exact divergence constraint inherit some properties from the $H^2$-conforming spaces. 
The importance of exact divergence constraints lies in the fact that they ensure pressure robustness of mixed methods for incompressible fluid equations, see~\cite{Linke09,GalvinLinkeRebholzWilson12} for specific examples and~\cite{John17} for a discussion illustrating this point in the context of computational fluid dynamics.  
There are alternatives to ensure pressure robustness by non-conforming methods such as~\cite{LedererLinkeMerdonSchoeberl17,KreuzerVerfuerthZanotti21}. 
However, they inflict extra challenges for time-dependent and for non-linear equations, some of which have not been resolved to date. 
As for the $H^2$-conforming spaces, exactly divergence-free finite elements are available for high polynomial degree, e.g., the Falk--Neilan element~\cite{FalkNeilan2013} and the Scott--Vogelius element~\cite{ScottVogelius85,GuzmanNeilan2014}. 
For the latter, using split meshes the polynomial degree can be lowered~\cite{GuzmanNeilan2018,FabienGuzmanNeilanEtAl2022}, but the number of degrees of freedom increases; see~\cite{ScottTscherpel2024} for a comparison. 
The \Guzman{}--Neilan element~\cite{GuzmanNeilan2014}, see also Section~\ref{sec:GNelement} below, uses rational basis functions for the velocity space, and requires considerably fewer degrees of freedom. 
Note that also a 3D version is available in~\cite{GuzmanNeilan2014-3D}. 

\subsubsection*{Quadrature for rational functions}

While exact quadrature formulas can be applied for rational functions with tensor product structure, the theory is much less developed for more general rational functions.
On simplices, to the best of our knowledge, no exact quadrature for rational functions is available in the literature, not even in the lowest-dimensional case $d = 2$.
In practice, the Duffy transform in combination with a quadrature of sufficiently high exactness degree is used. This approach is common in the implementation of NURBS, and was also applied to the 2D \Guzman{}--Neilan finite element in~\cite{Schneier2015}. 
Although the integration is not exact, geometric convergence in the number of quadrature points is available. 
For applications in isogeometric analysis this may be sufficient. 
However, for finite element spaces with exact divergence constraints the lack of an exact quadrature comprises the conformity, see Section~\ref{subsec:ExpPressRob}, and thereby diminishes the benefit of the method. 

\subsubsection*{Main contribution}
In this work we present an exact quadrature for a class of 2D rational functions on triangles, based on a recursion formula. 
The quadrature can be computed during an offline phase and  tabulated for efficient use. 
This opens the door for the further investigation of the singular \Zienkiewicz element and the \Guzman{}--Neilan element, and their use in practical applications. 
Our objective is to demonstrate the overall procedure, and provide an easily accessible implementation in \Matlab. Specifically, in the spirit of~\cite{AlbertyCarstensenFunken99,AlbertyCarstensenFunkenKlose02,BahriawatiCarstensen05,CarstensenGallistlHu14,Bartels15} we provide an implementation of the biharmonic problem discretized by singular \Zienkiewicz element and of the Stokes problem discretized by the lowest-order \Guzman{}--Neilan element. 
All code is available at~\cite{Code}. 
Since the focus of this work is not on highly optimized performance, any improvement in this direction is left to future work. 

\subsubsection*{Outline} 
In Section~\ref{sec:rat-poly} we present an exact recursive integration formula for a large class of rational polynomials in $d=2$ dimensions. 
The class of rational polynomials includes basis functions occurring in singular \Zienkiewicz and in \Guzman{}--Neilan finite elements, which are introduced in Section~\ref{sec:Zienkiewicz} and Section~\ref{sec:GNelement}, respectively.
Section~\ref{sec:impl-general} provides a general guideline for implementing finite element methods with such rational basis functions. 
This is illustrated through simple and short \Matlab implementations of the biharmonic problem discretized by the singular Zienkiewicz element and the Stokes problem discretized by the lowest-order \Guzman{}--Neilan element. 
The implementations are discussed in Section~\ref{sec:implZienk} and~\ref{sec:implGN}, respectively. 
We conclude our investigation with a numerical example of the impact of inexact integration on the biharmonic eigenvalue problem in Section~\ref{subsec:BiHarm} and the Stokes equation in Section~\ref{subsec:ExpPressRob}, highlighting the need for exact integration rules.

\section{Finite element spaces}
\label{sec:FEspace}

\noindent 
In this section we introduce the singular \Zienkiewicz in Section~\ref{sec:Zienkiewicz}, and the lowest-order \Guzman{}--Neilan element in Section~\ref{sec:GNelement}. 

Throughout this work we denote by~$\mathcal{T}$ a triangulation in the sense of Ciarlet~\cite[\S~2.1, p.~38]{Ciarlet2002} of a two-dimensional, bounded, polyhedral domain $\Omega$. 
By $\nodes$ we denote the set of all vertices in~$\tria$.
A triangle~$T=[v_0,v_1,v_2] \in \mathcal{T}$ is determined by its vertices $v_0,v_1,v_2 \in \mathbb{R}^2$. 
The set of all faces/edges~$\edge$ of~$T$ is $\edges(T) = \lbrace\edge_0,\edge_1,\edge_2\rbrace$, where $\edge_j$ denotes the edge opposite of vertex $v_j$ for all $j=0,1,2$. 
The outer unit normal of $f \in \edges(T)$ is denoted by $\nu_{f}$.
 By$~\midpoint(T)$ and $\midpoint(\edge)$ we denote their respective midpoints. 
Whenever indices exceed $2$, they are understood as modulo $3$, e.g., $v_3 = v_0$.  
The space of polynomials on~$T$ of maximal degree $k\in \setN_0$ is denoted by $\mathcal{P}_k(T)$. Moreover, let $\lambda_0,\lambda_1,\lambda_2 \in \mathcal{P}_1(T)$ denote the barycentric coordinates associated to the vertices $v_0,v_1,v_2$, that is, $\lambda_j(v_k) = \delta_{j,k}$ for all $j,k=0,1,2$. We denote the vector of barycentric coordinates by $\lambda = (\lambda_0,\lambda_1,\lambda_2)^\top \in [0,1]^3$. 
We shall frequently use that these functions form the partition of unity $\lambda_0 + \lambda_1 + \lambda_2 = 1$.  
The barycentric coordinates represent a mapping from $T$ onto the reference simplex $\widehat{T} = [\hat{v}_0,\hat{v}_1,\hat{v}_2] \subset \RR^3$ with vertices 
\begin{equation}\label{eq:defvhat}
\hat{v}_0 = (1,0,0)^\top,\quad \hat{v}_1 = (0,1,0)^\top,\quad\hat{v}_2  = (0,0,1)^\top.
\end{equation}
If there is no risk of misunderstanding $\nabla$ denotes the gradient with respect to $x$. 
If a gradient is taken with respect to another variable $y$ instead, for clarity we denote the gradient by $\nabla_y$. 

\subsection{Singular \Zienkiewicz element}
\label{sec:Zienkiewicz}
\begin{figure}
    \begin{tikzpicture}[scale=1]
		\coordinate (z1) at (0,0);
		\coordinate (z2) at (2,0);
		\coordinate (z3) at (1,1.73);
		\draw (z1) circle (5pt);		
		\draw (z2) circle (5pt);
		\draw (z3) circle (5pt);
		\fill (z1) circle (2pt);
		\fill (z2) circle (2pt);
		\fill (z3) circle (2pt);
		\draw (z1) -- (z2) -- (z3) -- cycle;
		
		\coordinate (y1) at (4,0);
		\coordinate (y2) at (6,0);
		\coordinate (y3) at (5,1.73);
		\draw (y1) circle (5pt);		
		\draw (y2) circle (5pt);
		\draw (y3) circle (5pt);
		\fill (y1) circle (2pt);
		\fill (y2) circle (2pt);
		\fill (y3) circle (2pt);
		\draw (y1) -- (y2) -- (y3) -- cycle;
    \def\lengthOrthoLine{.14cm}
\coordinate (Midpoint) at ($(y1)!0.5!(y3)$);
    \draw[thick] (Midpoint) -- ($(Midpoint)!+\lengthOrthoLine!90:(y3)$) 
        coordinate (OrthoEnd);
    \draw[thick] (Midpoint) -- ($(Midpoint)!-\lengthOrthoLine!90:(y3)$);
    
	\coordinate (Midpoint2) at ($(y1)!0.5!(y2)$);
    \draw[thick] (Midpoint2) -- ($(Midpoint2)!+\lengthOrthoLine!90:(y2)$) 
        coordinate (OrthoEnd);
    \draw[thick] (Midpoint2) -- ($(Midpoint2)!-\lengthOrthoLine!90:(y2)$);

	\coordinate (Midpoint3) at ($(y3)!0.5!(y2)$);
    \draw[thick] (Midpoint3) -- ($(Midpoint3)!+\lengthOrthoLine!90:(y2)$) 
        coordinate (OrthoEnd);
    \draw[thick] (Midpoint3) -- ($(Midpoint3)!-\lengthOrthoLine!90:(y2)$);
  \end{tikzpicture}
\caption{Degrees of freedom in the \Zienkiewicz{} element and the reduced singular \Zienkiewicz (left) as well as in the singular \Zienkiewicz (right) element. 
Dots denote point evaluation, circles denote evaluation of the gradient, and straight lines denote the evaluation of the normal derivative.  
	}\label{fig:zien}
\end{figure}%

The singular \Zienkiewicz element~\cite[Thm.~6.1.4]{Ciarlet2002} is a $C^1$-conforming finite element. 
It is an enrichment of the \emph{\Zienkiewicz{} element}, which is a reduced cubic Hermite element, see~\cite[Thm.~2.2.9 and Fig.~2.2.16]{Ciarlet2002}.
For this reason, let us first recall the \Zienkiewicz{} element. 
It is globally $C^0$-conforming and gradients are continuous in the vertices. 
Its 9~degrees of freedom on each triangle~$T$ are the point values of the function and the point values of the gradient at its vertices, cf.~Figure~\ref{fig:zien}. 
The local space is a subspace of $\mathcal{P}_3(T)$, and the latter has dimension~$10$. 
Since the local space has only 9~degrees of freedom, one considers only polynomials $p \in \mathcal{P}_3(T)$ which satisfy the additional condition
\begin{equation}\label{eq:Hermite}
  \psi(p) \coloneqq 6\, p\big(\textup{mid}(T)\big)  - 2 \,\sum_{j=0}^2 p(v_j) + \sum_{k=0}^2 \nabla p(v_k) \cdot \big(v_k - \textup{mid}(T)\big) = 0.
\end{equation}
In other words, the local space reads $\Zspace(T) \coloneqq \lbrace p \in \mathcal{P}_3(T) \colon \psi(p) = 0\rbrace$. 

\begin{lemma}[\Zienkiewicz{} element]\label{lem:Zien}
  \leavevmode%
  \begin{enumerate}
  \item 
    The local space $\Zspace(T)$ can be represented as 
    \begin{equation*}
      \Zspace(T) = \mathcal{P}_2(T) \oplus  
      \linearspan \set{ \lambda^2_j \lambda_{j+1}- \lambda_{j+1}^2 \lambda_j}_{j = 0,1,2}.
    \end{equation*}
    Its degrees of freedom are the values of $p$ and $\nabla p$ at the vertices of $T$ for $p \in Z(T)$, see Figure~\ref{fig:zien} (left). 
  \item 
    The corresponding global space $\Zspace$ is $C^0$-conforming and is given by
    \begin{align*}
      \Zspace &= \lbrace v \in H^1(\Omega)\colon v|_T\in \Zspace(T)\text{ for all }T\in \tria\rbrace. 
    \end{align*}
  \end{enumerate}
\end{lemma}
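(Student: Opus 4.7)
\emph{Approach.} My plan is to prove both parts via a dimension count. A direct evaluation on the cubic bubble $b \coloneqq \lambda_0\lambda_1\lambda_2$ would give $\psi(b) = 6\, b(\midpoint(T)) = 2/9$, since $b$ and $\nabla b$ vanish at every vertex. In particular $\psi$ is a nontrivial linear functional on $\mathcal{P}_3(T)$, so $\Zspace(T) = \ker\psi$ has dimension $10 - 1 = 9$.

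\emph{Decomposition (i).} I would first check that both summands sit inside $\Zspace(T)$. For a quadratic $p$ with constant Hessian $H_p$, setting $S \coloneqq \sum_{k=0}^2 (v_k - \midpoint(T))^\top H_p (v_k - \midpoint(T))$ and Taylor expanding around $\midpoint(T)$ together with $\sum_k(v_k - \midpoint(T)) = 0$ yields $\sum_k p(v_k) = 3 p(\midpoint(T)) + S/2$ and $\sum_k \nabla p(v_k)\cdot(v_k - \midpoint(T)) = S$, whence $\psi(p) = 6 p(\midpoint(T)) - 2(3 p(\midpoint(T)) + S/2) + S = 0$. For each $q_j = \lambda_j\lambda_{j+1}(\lambda_j - \lambda_{j+1})$ I would verify that $q_j$ vanishes at every vertex and at $\midpoint(T)$ and compute $\nabla q_j(v_j) = \nabla\lambda_{j+1}$, $\nabla q_j(v_{j+1}) = -\nabla\lambda_j$, $\nabla q_j(v_{j+2}) = 0$; together with the identity $\nabla\lambda_l\cdot(v_m - v_n) = \delta_{lm} - \delta_{ln}$ this yields $\psi(q_j) = 0$. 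Linear independence of the $q_j$ modulo $\mathcal{P}_2(T)$ can be verified on the reference triangle by extracting the homogeneous cubic parts and checking that the resulting $3\times 4$ coefficient matrix in $(x^3, x^2y, xy^2, y^3)$ has rank three, so the direct sum has dimension $6+3 = 9 = \dim\Zspace(T)$.

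\emph{Unisolvence and $C^0$-conformity (ii).} Since the number of degrees of freedom matches $\dim\Zspace(T) = 9$, unisolvence reduces to showing that $p \in \Zspace(T)$ with $p(v_k) = 0 = \nabla p(v_k)$ for $k=0,1,2$ vanishes identically. Any such $p \in \mathcal{P}_3(T)$ satisfies the nine vanishing conditions of vertex-Hermite interpolation, which have rank nine on the ten-dimensional $\mathcal{P}_3(T)$ by the standard unisolvence of the (full) cubic Hermite element; hence $p \in \operatorname{span}\{b\}$, and $0 = \psi(p) = c\psi(b) = 2c/9$ forces $c = 0$. For the global $C^0$-conformity, on every shared edge $f = [v_i, v_j]$ the trace $p|_f$ is a univariate cubic determined by $p(v_i)$, $p(v_j)$, $\partial_\tau p(v_i)$, $\partial_\tau p(v_j)$; each of these is a fixed linear combination of the shared vertex degrees of freedom, so the traces agree across $f$ and $p \in H^1(\Omega)$. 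The only mildly delicate step is the linear independence of $q_0,q_1,q_2$ modulo $\mathcal{P}_2(T)$; the remaining arguments rest on partition-of-unity identities and the standard cubic Hermite theory.
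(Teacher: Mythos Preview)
Your proof is correct. The main methodological difference from the paper lies in how you show $\psi(q_j)=0$: the paper exploits that $\psi$ is invariant under affine transformations of $T$, so $\psi(\lambda_i^2\lambda_j)=\psi(\lambda_j^2\lambda_i)$ by swapping the roles of $v_i$ and $v_j$, and $\psi(q_j)=0$ drops out in one line without computing any gradients. Your direct evaluation via $\nabla\lambda_l\cdot(v_m-v_n)=\delta_{lm}-\delta_{ln}$ reaches the same conclusion with more bookkeeping but no appeal to symmetry. Conversely, you supply self-contained arguments for unisolvence (reducing to $\operatorname{span}\{b\}$ via the full cubic Hermite element and using $\psi(b)=2/9$) and for $C^0$-conformity (1D cubic Hermite interpolation on edges), whereas the paper simply cites Ciarlet for both. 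Your computation $\psi(b)=2/9$ also makes explicit that $\psi\not\equiv 0$ and hence $\dim Z(T)=9$, which the paper takes as given from the setup.
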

\begin{proof}
  Thanks to the structure of $\psi$ based on a Taylor expansion of second order, one can see that $\psi(p) = 0$ for any $p \in \mathcal{P}_2(T)$. 
  Since $\psi$ is linear and invariant under affine transformation, we obtain for any $i,j \in \{0,1,2\}$ with $i \neq j$
  \begin{align*}
  \psi(\lambda_i^2 \lambda_j - \lambda_i \lambda_j^2)
   =   \psi(\lambda_i^2 \lambda_j) - \psi(\lambda_i \lambda_j^2) 
  =    \psi(\lambda_i^2 \lambda_j) - \psi(\lambda_j \lambda_i^2) = 0.
  \end{align*} 
  Hence, $\psi(p)=0$ holds also for all $p\in \mathcal{P}_2(T) \oplus \linearspan\set{ \lambda^2_j \lambda_{j+1}- \lambda_{j+1}^2 \lambda_j}_{j = 0,1,2}$.  Since the basis functions are linearly independent and the dimensions agree, this shows the representation of~$\Zspace(T)$.  
  The second statements can be found in~\cite[Thm.~2.2.9 and 2.2.10]{Ciarlet2002}.
\end{proof}

There exist modifications of the polynomial space $Z(T)$ with improved properties~\cite{Wang2007}. 
However, the following considerations show the difficulties when aiming for $C^1$-conformity.

Let $T_-$ and $T_+$ be two neighboring triangles with joint edge $\edge = T_-\cap T_+$ and normal vector $\nu$ of $f$. 
Let $p_- \in \Zspace(T_-)$ and $p_+ \in \Zspace(T_+)$ be local \Zienkiewicz{} polynomials that coincide at their common degrees of freedom and set 
\begin{equation}\label{eq:DefP}
  p \colon T_- \cup T_+ \to \mathbb{R}\qquad\text{with } \, p|_{T_-} \coloneqq p_- \,\text{ and } \, p|_{T_+} \coloneqq p_+.
\end{equation}
Then $p$ is continuous. 
However, in general $p$ is not continuously differentiable, since the normal derivative of $p$ at~$f$ may jump. 
To achieve $C^1$-conformity, additional degrees of freedom are needed on the joint edge~$f$, which in turn requires enriching the local spaces. 
For this purpose an extra basis functions $B_f$ is needed that allows to adjust the normal derivative at~$f$. More precisely, we require that
\begin{subequations}
  \label{eq:cond-Bf}
  \begin{align}
  \label{eq:cond-Bf1}
   &\text{$B_f=0$ on $\partial T_-$ and $\partial T_+$,}
    \\
  \label{eq:cond-Bf2}
   &\text{$\nabla B_f=0$ on $\partial (T_- \cup T_+)$,}
    \\
  \label{eq:cond-Bf3}
   &\text{$\partial_\nu B_f \in \mathcal{P}_2(f)$  and }
   \text{$\partial_\nu B_f(\midpoint{f)} \neq 0$.}
  \end{align}
\end{subequations}
This ensures that~$B_f$ does not interfere with the other degrees of freedom.

\begin{remark}[Polynomial bases]
  \label{rem:not-polynomial}
  The properties in~\eqref{eq:cond-Bf} cannot be achieved using polynomials which we will discuss here. This is the reason, why non-polynomial basis function like rational bubbles are needed for this approach, see Lemma~\ref{lem:RatBubbles}.

We consider the simplices 
\begin{equation*}
T^-=\left[
  \begin{pmatrix}
    0 \\ 0
  \end{pmatrix},
  \begin{pmatrix}
    1 \\ 0
  \end{pmatrix},
  \begin{pmatrix}
    0 \\ 1
  \end{pmatrix}
  \right]\qquad\text{and}\qquad T^+=\left[
  \begin{pmatrix}
    1 \\ 1
  \end{pmatrix},
  \begin{pmatrix}
    1 \\ 0
  \end{pmatrix},
  \begin{pmatrix}
    0 \\ 1
  \end{pmatrix}
  \right].
\end{equation*}
  Their common face reads $f = [  \begin{psmallmatrix}
    1 \\ 0
  \end{psmallmatrix},
  \begin{psmallmatrix}
    0 \\ 1
  \end{psmallmatrix}
  ]$. 
  For contradiction, suppose that there is a polynomial~$B_f$ satisfying~\eqref{eq:cond-Bf}.  
  Then, the first two conditions require  that for some polynomial~$p$ one has
  \begin{align*}
    B_f(x,y) = x^2y^2(1-x-y)p(x,y)\qquad\text{for all }(x,y) \in T^-.
  \end{align*}
  This implies that $\partial_\nu B_f(x,1-x)= -\sqrt{2} x^2(1-x)p(x,1-x)$ on~$f$. The condition $\partial_\nu B_f \in \mathcal{P}_2(f)$ can only be fulfilled if $p(x,1-x)=0$, but then we have $\partial_\nu B_f=0$ on~$f$. This contradicts~\eqref{eq:cond-Bf3}.
\end{remark}

The singular \Zienkiewicz element~\cite[Thm.~6.1.4]{Ciarlet2002} uses a rational bubble function for~$B_f$.
More precisely, for $j=0,1,2$ let  $f_j = [v_{j+1},v_{j+2}]$ denote the edge connecting the vertices $v_{j+1}$ and $v_{j+2}$ of $T$. 
 In other words, $f_j$ is the edge opposite of $v_j$ in $T$. Let $\nu_{f_j}$ denote the outer unit normal vector of $f_j$.
We denote the cubic element bubble by $b_T\coloneqq \lambda_0 \lambda_1 \lambda_2$, the quadratic edge bubble by $b_{\edge_j} \coloneqq \lambda_{j+1} \lambda_{j+2}$ and the rational bubble by
\begin{equation}\label{eq:DefRatBubble}
  B_{\edge_j} \coloneqq  \frac{b_T b_{\edge_j}}{(\lambda_j + \lambda_{j+1})(\lambda_j + \lambda_{j+2})} = \frac{\lambda_0 \lambda_1 \lambda_2 \lambda_{j+1} \lambda_{j+2}}{(1-\lambda_{j+1})(1-\lambda_{j+2})}.
\end{equation}
The quintic polynomial in the numerator ensures~\eqref{eq:cond-Bf1} and~\eqref{eq:cond-Bf2}. 
On~$f_j$ the denominator reduces to $\lambda_{j+1}\lambda_{j+2}$, so on~$f_j$ the function $B_{f_j}$ is a cubic polynomial. 
This ensures that its derivative is quadratic on~$f_j$. 
The following lemma summarizes the important properties of~$B_{f_j}$.

\begin{lemma}[{Rational bubbles, e.g.,~\cite[Lem.~2.1]{GuzmanNeilan2014}}]\label{lem:RatBubbles}
  For all $T\in \tria$ and all indices $j = 0,1,2$ with edges $\edge_j$ of $T$,  the rational bubble $B_{\edge_j}$ is in $C^1(T)$ and satisfies
  \begin{enumerate}
  \item $B_{\edge_j}|_{\partial T} = 0$,
  \item $\nabla B_{\edge_j}|_{\edge_k} = 0$ \quad for all $k=0,1,2$ with $k\neq j$,
  \item $\nabla B_{\edge_j}|_{\edge_j}  = (\nabla b_T|_{f_j} \cdot \nu_{\edge_j}) \nu_{\edge_j}  = - b_{f_j} \nabla \lambda_j  = - \abs{\nabla \lambda_j} b_{f_j} \nu_{\edge_j} \in \mathcal{P}_2(\edge_j)^2$.
    \label{itm:SameDerivAsBubble}
  \end{enumerate}
\end{lemma}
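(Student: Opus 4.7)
By cyclic symmetry the plan is to reduce to $j=0$ and analyze
\[
  B_{f_0} = \frac{\lambda_0 \lambda_1^2 \lambda_2^2}{(1-\lambda_1)(1-\lambda_2)}.
\]
The denominator factors as $(\lambda_0+\lambda_2)(\lambda_0+\lambda_1)$ and is strictly positive on $T$ away from the two vertices $v_1$ and $v_2$, so on the complement of these points $B_{f_0}$ is $C^\infty$. The main obstacle will be $C^1$-regularity at $v_1$ and $v_2$, which I would establish by a local bound.

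For the regularity at $v_1$ (the argument at $v_2$ is symmetric) I would introduce local coordinates $s = 1-\lambda_1$, $t = \lambda_2$, so that $\lambda_0 = s-t$ with $0 \le t \le s \le 1$ on $T$, and
\[
  B_{f_0} = \frac{(s-t)(1-s)^2 t^2}{s(1-t)}.
\]
Since $(s-t)/s \le 1$ and the remaining factors stay bounded near the origin, one immediately gets the quadratic bound $|B_{f_0}| \le C t^2 \le C|x-v_1|^2$. A direct quotient-rule computation shows that $\partial_s B_{f_0}$ and $\partial_t B_{f_0}$ are dominated by $t^3/s^2$ and $t$, respectively, both of which tend to $0$ as $(s,t) \to (0,0)$ because $t \le s$. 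Hence $B_{f_0}$ and its gradient extend continuously at $v_1$ with value zero, giving $B_{f_0} \in C^1(T)$.

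With $C^1$-regularity in hand, the three claims reduce to short computations. For (i), the numerator contains $\lambda_0 \lambda_1 \lambda_2$ and hence vanishes on every edge of $T$; the denominator is positive on $\partial T \setminus \{v_1,v_2\}$, and continuity closes the two vertices. For (ii), on $f_k$ with $k \in \{1,2\}$ the numerator carries a factor $\lambda_k^2$, so locally $B_{f_0} = \lambda_k^2 h_k$ with $h_k$ smooth on the interior of $f_k$; the product rule then gives $\nabla B_{f_0}|_{f_k} = 0$ by $C^1$-continuity. For (iii), writing $B_{f_0} = \lambda_0 g$ with $g = \lambda_1^2 \lambda_2^2/[(1-\lambda_1)(1-\lambda_2)]$ and using $\lambda_1 + \lambda_2 = 1$ on $f_0$ yields $g|_{f_0} = \lambda_1\lambda_2 = b_{f_0}$, so $\nabla B_{f_0}|_{f_0} = b_{f_0}\,\nabla \lambda_0$. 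Comparing with $\nabla b_T|_{f_0} = b_{f_0}\,\nabla\lambda_0$, and using that $\nabla\lambda_0$ is parallel to $\nu_{f_0}$ with $\nabla\lambda_0\cdot\nu_{f_0} = -|\nabla\lambda_0|$ (because $\lambda_0$ vanishes on $f_0$ and is positive inside $T$), one obtains the chain of equalities in (iii), and $\nabla B_{f_0}|_{f_0}$ lies in $\mathcal{P}_2(f_0)^2$ since $b_{f_0}$ is quadratic on $f_0$.
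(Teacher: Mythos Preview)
The paper does not prove this lemma itself but defers to \cite[Lem.~2.1]{GuzmanNeilan2014}, so there is no in-paper argument to compare against. Your direct proof is correct: the local coordinate estimate near the two singular vertices gives the $C^1$-extension (your bound on $\partial_s B_{f_0}$ is slightly loose, since the quotient rule also produces a term of order $t^2(s-t)/s$, but this too vanishes at the vertex), and the factorizations $B_{f_0}=\lambda_k^2 h_k$ and $B_{f_0}=\lambda_0 g$ reduce (i)--(iii) to routine product-rule computations.

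One remark on (iii): your calculation gives $\nabla B_{f_0}|_{f_0}=b_{f_0}\nabla\lambda_0$, which agrees with the first and last members of the printed chain but has the opposite sign to the middle expression $-b_{f_j}\nabla\lambda_j$ as stated. Since $\nabla\lambda_0=-\lvert\nabla\lambda_0\rvert\,\nu_{f_0}$, the last two printed terms are themselves inconsistent; the middle one is a typo in the statement, and your sign is the correct one.
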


The local space of the singular \Zienkiewicz element is defined as
\begin{equation}\label{eq:DefZienkiwiecz}
	\Zsing(T) \coloneqq \Zspace(T) \oplus \linearspan\lbrace B_{\edge_j}\colon j=0,1,2\rbrace.
\end{equation}
\begin{lemma}[Singular \Zienkiewicz element {\cite[Thm.~6.1.4]{Ciarlet2002}}]\label{lem:sing-Zien} \hfill
  \begin{enumerate}
  \item 
    Any $w \in \Zsing(T)$ is uniquely determined by the values $w(v_j)$, $\nabla w(v_j)$ and $\nabla w(\midpoint(f_j)) \cdot \nu_{f_j}$, $j = 0,1,2$, i.e., those are its degrees of freedom, see Figure~\ref{fig:zien} (right). 
  \item  
   The corresponding global space $\Zsing$ is $C^1$-conforming and is given by 
    $$ \Zsing = \lbrace w \in H^2(\Omega)\colon w|_T\in \Zsing(T)\text{ for all }T\in \tria\rbrace.$$
  \end{enumerate}
\end{lemma}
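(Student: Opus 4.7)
The plan is to prove the two assertions in order, starting with local unisolvence and then passing to global $C^1$-conformity.

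\textbf{Part (1): Unisolvence.} First I would check that the dimensions match: by Lemma~\ref{lem:Zien} the space $\Zspace(T)$ has dimension $9$, and since the three rational bubbles are nonzero along their respective edges, while $\Zspace(T)$ consists of functions with zero normal derivative at edge midpoints in general—actually, more simply, I would show that the three $B_{f_j}$ are linearly independent modulo $\Zspace(T)$ by applying the DOFs $\nabla(\cdot)(\midpoint(f_k))\cdot \nu_{f_k}$ to them (see below). Hence $\dim \Zsing(T) = 9 + 3 = 12$, matching the number of prescribed DOFs. Then unisolvence follows from injectivity: assume $w = p + \sum_{j=0}^{2} c_j B_{f_j}$ with $p \in \Zspace(T)$ has all $12$ DOFs equal to zero. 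By Lemma~\ref{lem:RatBubbles}(i)--(ii), each $B_{f_j}$ and $\nabla B_{f_j}$ vanish on $\partial T$, hence at every vertex. Thus the nine vertex-based DOFs of $w$ coincide with those of $p$, and Lemma~\ref{lem:Zien} forces $p=0$. It remains to apply the edge-midpoint normal-derivative DOFs. By Lemma~\ref{lem:RatBubbles}(ii), $\nabla B_{f_j}$ vanishes on $f_k$ for $k \neq j$, so
\begin{equation*}
\nabla w(\midpoint(f_k))\cdot \nu_{f_k} = c_k\, \nabla B_{f_k}(\midpoint(f_k))\cdot \nu_{f_k},
\end{equation*}
and by Lemma~\ref{lem:RatBubbles}(iii) this equals $-c_k |\nabla \lambda_k|\, b_{f_k}(\midpoint(f_k))$, which is nonzero since the edge bubble is strictly positive at midpoints. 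Hence $c_k = 0$ for all $k$, and $w = 0$.

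\textbf{Part (2): $C^1$-conformity.} Let $T_-,T_+$ share an edge $f$, and let $w_\pm \in \Zsing(T_\pm)$ agree on all shared DOFs (values and gradients at the two endpoints $v$, $v'$ of $f$, and the normal derivative at $\midpoint(f)$). I must check two matching conditions on $f$:
\emph{Continuity of $w$.} Every rational bubble vanishes on $\partial T_\pm$ by Lemma~\ref{lem:RatBubbles}(i), so $w_\pm|_f$ reduces to the polynomial part, restricted to $f$; this is a univariate cubic determined by the four scalar DOFs $w(v), \nabla w(v)\cdot \tau_f, w(v'), \nabla w(v')\cdot \tau_f$, where $\tau_f$ is the edge tangent. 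Hence $w_-|_f = w_+|_f$.
\emph{Continuity of the normal derivative.} On $f$, the rational bubbles of the two edges different from $f$ have vanishing gradient by Lemma~\ref{lem:RatBubbles}(ii), while the cubic polynomial part contributes a quadratic normal trace and the bubble $B_f$ contributes a term proportional to $b_f$, again quadratic on $f$ by Lemma~\ref{lem:RatBubbles}(iii). Hence $\partial_\nu w_\pm|_f \in \mathcal{P}_2(f)$. A univariate quadratic is determined by its values at three points; at the endpoints $v,v'$ these are fixed by the matching vertex gradient DOFs, and at $\midpoint(f)$ by the matching midpoint DOF. Therefore $\partial_\nu w_-|_f = \partial_\nu w_+|_f$.

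\textbf{Conclusion.} The pointwise matching of $w$ and $\nabla w$ across each interior edge yields $w \in C^1(\Omega)$, in particular $w \in H^2(\Omega)$ (since each $w|_T$ is smooth). The set equality
$\Zsing = \{w \in H^2(\Omega): w|_T \in \Zsing(T) \text{ for all } T \in \tria\}$
then follows: any $w$ in the right-hand side automatically has matching DOFs across shared edges (vertex values and gradients by $H^2$-trace continuity, and midpoint normal derivatives similarly), so it is patched together from local DOFs and lies in $\Zsing$. The main subtlety is in Part~(2) and lies in verifying that $\partial_\nu w|_f$ is a quadratic polynomial determined by exactly three well-chosen values; this is guaranteed by property~(iii) of Lemma~\ref{lem:RatBubbles}, which is precisely the design goal that forced the use of rational, rather than polynomial, bubbles (cf.\ Remark~\ref{rem:not-polynomial}).
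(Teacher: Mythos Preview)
Your proof is correct and follows essentially the same approach as the paper. The paper itself does not give a formal proof of this lemma, citing instead \cite[Thm.~6.1.4]{Ciarlet2002} and adding only a one-sentence sketch after the statement (``the normal derivatives on each face are quadratic and coincide on the end points and the mid points of each face''), which is exactly your argument for Part~(2); your Part~(1) supplies the standard unisolvence argument that the paper leaves to the reference.
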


Note that $w \in \Zsing$ is continuous and its tangential derivatives are continuous on each face.  Furthermore, the normal derivatives on each face are quadratic and coincide on the end points and the mid points of each face.  Therefore, they are continuous as well.

Continuity of the normal derivatives can be achieved with a smaller local space, namely the one of the so-called reduced singular \Zienkiewicz element
\begin{equation}\label{eq:ReducedZienkiewicz}
	\Zred(T) \coloneqq \lbrace w \in \Zsing(T)\colon \nabla w|_\edge \cdot \nu_\edge \in \mathcal{P}_1(\edge)\text{ for all edges } \edge \in \edges(T)\rbrace.
\end{equation}
The functions in $\Zred(T)$ can be decomposed into quadratic functions and the functions $\lambda_j^2 \lambda_i - \lambda_i^2 \lambda_j$, $i,j \in \{0,1,2\}$ with $i \neq j$, corrected by the rational bubble function $B_{f_{k}}$, $k\in \{0,1,2\}$, such that its normal derivatives are linear on the faces.

\begin{lemma}[Reduced singular \Zienkiewicz element {\cite[Sec.~6.1.6]{Ciarlet2002}}]\label{lem:red-sing-Zien} \hfill
	\begin{enumerate}
  \item
		Any $w \in \Zred(T)$ is uniquely determined by the values $w(v_j)$ and $\nabla w(v_j)$ i.e., those are its degrees of freedom, see Figure~\ref{fig:zien} (left). 
  \item
    The corresponding global space $\Zred$ is $C^1$-conforming and is given by
		\begin{align*}
			\Zred &= \lbrace w \in H^2(\Omega)\colon w|_T\in \Zred(T)\text{ for all }T\in \tria\rbrace.
		\end{align*}
	\end{enumerate}
\end{lemma}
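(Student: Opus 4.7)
The strategy is to derive part~(1) from Lemma~\ref{lem:Zien} combined with a careful treatment of the rational bubble corrections, and to obtain part~(2) by verifying $C^1$-continuity across interior edges.

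For (1), I would use the direct-sum decomposition from~\eqref{eq:DefZienkiwiecz} to write every $w \in \Zred(T) \subset \Zsing(T)$ uniquely as
\begin{equation*}
w = p + \sum_{j=0}^{2} c_j B_{f_j},\qquad p \in \Zspace(T),\ c_j \in \RR.
\end{equation*}
By Lemma~\ref{lem:RatBubbles}, each bubble $B_{f_j}$ vanishes on $\partial T$, and $\nabla B_{f_j}$ vanishes on the two edges $f_k$ with $k\neq j$, and in particular at all three vertices. Consequently $w(v_k) = p(v_k)$ and $\nabla w(v_k) = \nabla p(v_k)$, so Lemma~\ref{lem:Zien} determines $p$ uniquely from the nine vertex DoFs. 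It remains to identify the three coefficients $c_j$. On the edge $f_k$, Lemma~\ref{lem:RatBubbles} gives
\begin{equation*}
\nabla w \cdot \nu_{f_k}\big|_{f_k} = \nabla p\cdot \nu_{f_k}\big|_{f_k} - c_k\,|\nabla \lambda_k|\, b_{f_k}\big|_{f_k}\in \mathcal{P}_2(f_k).
\end{equation*}
Since $b_{f_k}|_{f_k}$ is a nonzero quadratic vanishing at both endpoints of $f_k$, it spans the one-dimensional complement of $\mathcal{P}_1(f_k)$ in $\mathcal{P}_2(f_k)$, so the reduction condition $\nabla w\cdot\nu_{f_k}|_{f_k}\in \mathcal{P}_1(f_k)$ imposes exactly one nondegenerate linear equation on $c_k$, uniquely solvable thanks to $|\nabla \lambda_k|>0$. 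Combining, this yields unisolvence with respect to the nine vertex DoFs and $\dim \Zred(T) = 9$.

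For (2), I would verify $C^1$-conformity across each interior edge $f$ shared by two triangles $T_\pm$. If $w_\pm \in \Zred(T_\pm)$ agree in the vertex values and vertex gradients at the endpoints of $f$, then $w_\pm|_f$ lies in $\mathcal{P}_3(f)$ (since every $B_{f_j}$ vanishes on $\partial T_\pm$) and is uniquely determined by the endpoint values and tangential derivatives, hence $w$ is continuous across $f$. Similarly, by the definition of $\Zred$, the trace $\nabla w_\pm \cdot \nu_f|_f \in \mathcal{P}_1(f)$ is determined by the two endpoint normal derivatives, which also coincide. This yields $C^1$-continuity and therefore $H^2$-regularity of the glued function on $\Omega$. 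The reverse inclusion is standard, because an $H^2(\Omega)$-function whose restrictions lie in $\Zred(T)$ automatically has matching vertex values (by $C^0$-continuity) and matching vertex gradients (by continuity of the edge traces of $\nabla w$, which forces equality at the endpoints in our piecewise-smooth setting).

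The main technical step is the dimension-counting in part~(1): verifying that ``$\nabla w \cdot \nu_f|_f \in \mathcal{P}_1(f)$'' is a single nondegenerate linear condition on each edge, removing exactly three dimensions from $\Zsing(T)$. This relies crucially on the explicit formula for $\nabla B_{f_k} \cdot \nu_{f_k}|_{f_k}$ in Lemma~\ref{lem:RatBubbles} and on the nonvanishing of $|\nabla \lambda_k|$ and of $b_{f_k}$ in the interior of $f_k$.
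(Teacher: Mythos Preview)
Your argument is correct and matches the approach the paper sketches: the paper does not spell out a proof but, in the paragraph preceding the lemma, indicates that $\Zred(T)$ consists of the \Zienkiewicz functions $\mathcal{P}_2(T)\oplus\linearspan\{\lambda_j^2\lambda_i-\lambda_i^2\lambda_j\}$ each corrected by a suitable multiple of the rational bubbles $B_{f_k}$ so that the normal derivative becomes linear on every face, and then defers to~\cite[Sec.~6.1.6]{Ciarlet2002}. Your decomposition $w=p+\sum_j c_j B_{f_j}$, the use of Lemma~\ref{lem:RatBubbles} to see that the vertex DoFs determine $p$, and the observation that $b_{f_k}|_{f_k}$ spans the complement of $\mathcal{P}_1(f_k)$ in $\mathcal{P}_2(f_k)$ so that the reduction condition pins down each $c_k$, is exactly this correction procedure made precise; part~(2) then follows just as for Lemma~\ref{lem:sing-Zien}.
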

\subsection{\Guzman{}--Neilan element}
\label{sec:GNelement}
The \Guzman{}--Neilan element~\cite{GuzmanNeilan2014} is a mixed finite element consisting of a pressure space $\Qspace$ and a velocity space $\Vspace$. 
For those, the discretely divergence-free velocity functions are even exactly divergence-free. 
They are constructed in such a way that the divergence-free velocity functions are represented as the curl of some $C^1$-conforming \Zienkiewicz type finite element. 
With those they form an exact sequence with compatible projection operators. 

Let us summarize the construction in 2D for the lowest-order \Guzman{}--Neilan element in case the singular \Zienkiewicz element from Section~\ref{sec:Zienkiewicz} is used. 
Note that in the original contribution a slightly modified \Zienkiewicz type element is used, which we present in Remark~\ref{rmk:GN-orig} below, but this does not change any essential properties.
For functions $g\colon \RR^2 \to \RR$ we define
\begin{align}
  \label{eq:R}
  \bfcurl g
  &\coloneqq 
  \begin{pmatrix}
    \partial_{x_2} g \\ -\partial_{x_1} g
  \end{pmatrix}
  = R\, \nabla g\qquad\text{with }R \coloneqq \begin{pmatrix}
    0 & 1 \\ -1 & 0 
  \end{pmatrix}.
\end{align}
As above let~$b_T \coloneqq \lambda_0 \lambda_1 \lambda_2$ be the element bubble function, let~$b_{\edge_j} \coloneqq \lambda_{j+1} \lambda_{j+2}$ be the edge bubble functions, and let~$B_{\edge_j}$ be the rational bubble functions  as in~\eqref{eq:DefRatBubble}, for $j=0,1,2$. 
The local pressure space and velocity space are given by 
\begin{subequations}\label{eq:GN-local}
\begin{alignat}{3}
  Q(T) &\coloneqq \mathcal{P}_0(T),&\\
	V(T) &\coloneqq \mathcal{P}_1(T)^2 &&\oplus \, \linearspan\{ \bfcurl(\lambda_j^2 \lambda_{j+1} - \lambda_{j+1}^2 \lambda_j)\}_{j = 0,1,2} \\
	&& & \oplus \linearspan\{ \bfcurl B_{f_j}\}_{ j = 0,1,2 }.\notag
\end{alignat} 
\end{subequations}
Note that we have the representation (without direct sum)
\begin{align}\label{eq:VT}
	V(T) = \mathcal{P}_1(T)^2 + \bfcurl \Zsing(T). 
\end{align}
Since $V(T) \subset \mathcal{P}_2(T)^2 + \linearspan\{ \bfcurl B_{f_j}\}_{ j = 0,1,2 }$ it follows by Lemma~\ref{lem:RatBubbles}~\ref{itm:SameDerivAsBubble} that 
\begin{align}
  \label{eq:VinP2}
  v|_{\edge_j} \in \mathcal{P}_2(\edge_j)^2 \qquad \text{for any $v \in \Vspace(T)$}.
\end{align}
This ensures that using the degrees of freedom of the vector-valued second-order Lagrange finite element functions, see Figure~\ref{fig:GN}, one obtains globally continuous functions.

\begin{lemma}[\Guzman{}-Neilan element {\cite[Sec.~3.1--3.2]{GuzmanNeilan2014}}]\label{lem:GN} \leavevmode
  \begin{enumerate}
  \item 
    Any $v \in \Vspace(T)$ is uniquely determined by the values $v$ at vertices and the midpoints of the faces, see Figure~\ref{fig:GN} (left). 
    Any $q\in \Qspace(T)$ is uniquely determined by its value at the barycenter, see Figure~\ref{fig:GN} (right).
  \item  
    The global space $\Vspace$ is $C^0$-conforming and the global spaces are given by
    \begin{align*}
      \Vspace &= \lbrace v \in  H^1(\Omega)^2
      \colon v|_T\in \Vspace(T)\text{ for all }T\in \tria\rbrace,
      \\
      \Qspace &= \lbrace v \in L^2(\Omega)\colon v|_T\in \Qspace(T)\text{ for all }T\in \tria\rbrace. 
    \end{align*}
  \end{enumerate}
\end{lemma}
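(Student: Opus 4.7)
The plan is to prove~(1) by matching dimensions and showing injectivity of the degree-of-freedom map; part~(2) then follows by standard element-wise patching. For $\Qspace(T) = \mathcal{P}_0(T)$ unisolvency is immediate, so I would focus on $\Vspace(T)$. By the direct sum in~\eqref{eq:GN-local}, $\dim \Vspace(T) = 6 + 3 + 3 = 12$, which matches the $2\times 6 = 12$ scalar DOFs (two components at each of three vertices and three edge midpoints).

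The next step is to show that any $v \in \Vspace(T)$ vanishing at all six DOF nodes is identically zero. First I would upgrade nodal vanishing to boundary vanishing: by~\eqref{eq:VinP2}, $v|_{\edge_j} \in \mathcal{P}_2(\edge_j)^2$, and a quadratic on an edge is determined by its values at the two endpoints and the midpoint, so $v|_{\partial T}=0$. Next, since $\operatorname{div}(\bfcurl\,\cdot) = 0$ and $\operatorname{div}\mathcal{P}_1(T)^2 \subset \mathcal{P}_0(T)$, the divergence $\operatorname{div} v$ is constant, and the divergence theorem applied to $v|_{\partial T}=0$ gives $\operatorname{div} v = 0$.

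Using the representation $\Vspace(T) = \mathcal{P}_1(T)^2 + \bfcurl \Zsing(T)$ from~\eqref{eq:VT}, I would write $v = p_1 + \bfcurl w$ with $p_1 \in \mathcal{P}_1(T)^2$ and $w \in \Zsing(T)$. Divergence-freeness of $v$ forces $p_1$ to be divergence-free linear, and in 2D such a field is always of the form $p_1 = \bfcurl r$ for some $r \in \mathcal{P}_2(T) \subset \Zsing(T)$. Setting $W := r + w \in \Zsing(T)$, one has $v = \bfcurl W$, and the boundary condition $v|_{\partial T}=0$ translates to $\nabla W|_{\partial T}=0$. From this I would read off on each edge $\edge_j$: $\partial_\tau W|_{\edge_j}=0$, so $W$ is constant on $\edge_j$, making the three vertex values of $W$ agree at some common $c \in \mathbb{R}$; $\nabla W(v_j)=0$ at each vertex, killing the vertex-gradient DOFs; and the quadratic polynomial $\partial_\nu W|_{\edge_j}$ vanishes (in particular at its midpoint), killing the midpoint normal-derivative DOF. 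Lemma~\ref{lem:sing-Zien} then identifies $W$ with the constant $c$, so $v = \bfcurl c = 0$.

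For part~(2), $C^0$-conformity across any interior edge $\edge = T_-\cap T_+$ follows directly from~\eqref{eq:VinP2}: the two side traces both lie in $\mathcal{P}_2(\edge)^2$ and coincide at the two endpoints and the midpoint, hence on all of $\edge$. The global characterizations then follow from standard local-to-global assembly, and $\Qspace$ is discontinuous by construction. The step I expect to be the main obstacle is the translation of $\nabla W|_{\partial T}=0$ into precisely the correct vanishing singular-\Zienkiewicz DOFs (together with the compatibility condition that the three vertex values of $W$ coincide), so that Lemma~\ref{lem:sing-Zien} applies cleanly; keeping track of sign and orientation conventions for $\bfcurl = R\nabla$ is the technical subtlety here.
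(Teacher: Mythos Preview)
The paper does not give its own proof of this lemma; it is stated with a citation to \cite[Sec.~3.1--3.2]{GuzmanNeilan2014} and no argument is supplied. Your proposal is therefore not duplicating anything in the paper but rather filling in a proof the authors chose to outsource.

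Your argument is correct. The dimension count is right, the upgrade from nodal vanishing to $v|_{\partial T}=0$ via~\eqref{eq:VinP2} is clean, and the key reduction step---passing from $v$ to a potential $W \in \Zsing(T)$ and then invoking unisolvency of the singular \Zienkiewicz element (Lemma~\ref{lem:sing-Zien})---works exactly as you describe. The point you flag as the main obstacle (translating $\nabla W|_{\partial T}=0$ into the vanishing of the singular-\Zienkiewicz DOFs up to a common constant) is in fact straightforward: $\partial_\tau W|_{\edge_j}=0$ forces $W$ to be the same constant $c$ at all three vertices by connectedness of $\partial T$, and the remaining DOFs vanish pointwise. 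Since the constant function $c \in \mathcal{P}_2(T) \subset \Zsing(T)$ shares these DOF values, Lemma~\ref{lem:sing-Zien} gives $W \equiv c$ and hence $v=0$. The rotation $R$ in $\bfcurl = R\nabla$ is invertible, so no sign issues arise. Part~(2) follows as you say.
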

\begin{figure}[t]
    \begin{tikzpicture}[scale=1]
		\coordinate (z1) at (0,0);
		\coordinate (z2) at (2,0);
		\coordinate (z3) at (1,1.73);
		\fill (z1) circle (2pt);
		\fill (z2) circle (2pt);
		\fill (z3) circle (2pt);
		\fill ($(z1)!0.5!(z2)$) circle (2pt);
		\fill ($(z2)!0.5!(z3)$) circle (2pt);
		\fill ($(z3)!0.5!(z1)$) circle (2pt);
		\draw (z1) -- (z2) -- (z3) -- cycle;
		
		\coordinate (y1) at (4,0);
		\coordinate (y2) at (6,0);
		\coordinate (y3) at (5,1.73);
		\draw (y1) -- (y2) -- (y3) -- cycle;

    \fill ($(y1)!1/2!(y2)!1/3!(y3)$) circle (2pt);

  \end{tikzpicture}
  \caption{Degrees of freedom in the \Guzman{}--Neilan elements for the vector-valued velocity (left) and the pressure (right).} \label{fig:GN}
\end{figure}%

A special feature of the \Guzman{}--Neilan element is the fact that they are part of an exact discrete de~Rham complex~\cite[Eq.~4.5]{GuzmanNeilan2014}
\begin{align*}
	\mathbb{R}\; \xrightarrow{\hspace{1em}}\;	\Zsing 
	\; \xrightarrow{\bfcurl} \;\Vspace  \;\xrightarrow{\divergence }\;  \Qspace 
	\;\xrightarrow{\hspace{1em}}\; 0. 
\end{align*} 
Note that when including zero-traces the \Guzman{}--Neilan elements are inf-sup stable, as proved in~\cite{GuzmanNeilan2014} by use of a Fortin operator. 

\begin{remark}[Divergence-free functions]\label{rmk:div-free}
	The spaces of discretely divergence-free velocity functions are defined by
	\begin{align*}
		\Vspacediv& \coloneqq \Big\{v_h \in \Vspace\colon 
		\int_{\Omega} q_h \divergence v_h  \dx  
		= 0\; \text{ for all } q_h \in \Qspace\Big\}.
	\end{align*}
	Due to the representation of the velocity space in~\eqref{eq:VT} 
	we have that $\divergence \Vspace \subset \Qspace$. 
	Therefore, the discretely divergence-free velocity functions are exactly divergence-free, that is 
		$\Vspacediv \subset H^1_{\divergence}(\Omega) \coloneqq \lbrace v \in H^1(\Omega)^2 \colon \divergence v = 0\rbrace$. 
\end{remark}
\begin{remark}[Alternative space]\label{rmk:GN-orig}
	Instead of using the standard \Zienkiewicz elements (see Section~\ref{sec:Zienkiewicz}), \Guzman{} and Neilan use in~\cite{GuzmanNeilan2014} an alternative $C^1$-conforming \Zienkiewicz type element for their construction. 
	It has the same degrees of freedom, but the local velocity space reads
	\begin{equation*}
		\widetilde{\Zspace}_s(T) = \mathcal{P}_2(T) \oplus  
		\linearspan \set{ \lambda_j^2 \lambda_{j+1}}_{j = 0,1,2} \oplus 
		\linearspan \set{ B_{\edge_{j}}}_{j = 0,1,2}.
	\end{equation*}
	This means that they use the basis functions $\lambda_{j}^2\lambda_{j+1}$ instead of $\lambda_{j}^2\lambda_{j+1} -  \lambda_{j+1}^2 \lambda_{j}$, $j = 0,1,2$. 
	Then, instead of $V(T)$ as in~\eqref{eq:GN-local} they use the local spaces 
	\begin{align*}
		\widetilde{V}(T) &\coloneqq \mathcal{P}_1(T)^2 \oplus \linearspan\{ \bfcurl(\lambda^2_j \lambda_{j+1})\}_{ j = 0,1,2}	
		\oplus \linearspan\{ \bfcurl B_{f_j} \}_{ j = 0,1,2 },\\
		\widetilde{Q}(T) &\coloneqq \mathcal{P}_0(T).
	\end{align*}
	The pressure space is the same as above, but the velocity space is slightly different. 
	The corresponding global spaces form an exact sequence. 
	All the results cited in this section are proved in~\cite{GuzmanNeilan2014} for this construction, but also hold for the version of the spaces presented here. 
	This includes inf-sup stability, commuting projection properties, and the exactness of the discrete de~Rham complex. 	
\end{remark}

\subsubsection*{Reduced \Guzman{}--Neilan element}

\begin{figure}[t]
	\begin{tikzpicture}[scale=1]
		\coordinate (z1) at (0,0);
		\coordinate (z2) at (2,0);
		\coordinate (z3) at (1,1.73);
		\fill (z1) circle (2pt);
		\fill (z2) circle (2pt);
		\fill (z3) circle (2pt);
		\draw (z1) -- (z2) -- (z3) -- cycle;
		\draw (y1) -- (y2) -- (y3) -- cycle;
		\def\lengthOrthoLine{.28cm}
		\coordinate (Midpoint) at ($(z1)!0.5!(z3)$);
		\draw[thick,->] (Midpoint) -- ($(Midpoint)!+\lengthOrthoLine!90:(z3)$) 
		coordinate (OrthoEnd);
		
		\coordinate (Midpoint2) at ($(z1)!0.5!(z2)$);
		\draw[thick,->] (Midpoint2) -- ($(Midpoint2)!-\lengthOrthoLine!90:(z2)$) 
		coordinate (OrthoEnd);
		
		\coordinate (Midpoint3) at ($(z3)!0.5!(z2)$);
		\draw[thick,->] (Midpoint3) -- ($(Midpoint3)!+\lengthOrthoLine!90:(z2)$) 
		coordinate (OrthoEnd);
		
		\coordinate (y1) at (4,0);
		\coordinate (y2) at (6,0);
		\coordinate (y3) at (5,1.73);
		\draw (y1) -- (y2) -- (y3) -- cycle;
		
		\fill ($(y1)!1/2!(y2)!1/3!(y3)$) circle (2pt);
		
	\end{tikzpicture}
	\caption{Degrees of freedom in the reduced \Guzman{}--Neilan elements for the vector-valued velocity (left) and the pressure (right). Dots denote point evaluations and arrows evaluation of the normal component.}\label{fig:GNred}
\end{figure}%

\Guzman{} and Neilan have also developed a reduced element in~\cite{GuzmanNeilan2014} with fewer degrees freedom for the local velocity space, see Figure~\ref{fig:GNred}. 
The local spaces can be written with tangential unit vectors $\tau_f$ as
\begin{align*}
  \Vred(T) &\coloneqq \set{ v \in V(T)\colon v|_{\edge} \cdot\tau_{\edge} \in \mathcal{P}_1(\edge)\text{ for all  
  			$\edge \in \edges(T)$}},
  \\
  Q_r(T) &\coloneqq \mathcal{P}_0(T). 
\end{align*}
Since the tangential unit vectors can be obtained by $\tau_f = R^\top \nu_f$ with rotation matrix $R$ defined in~\eqref{eq:R}, we have for each $v\in Z(T)$ that
\begin{equation*}
\bfcurl v|_f \cdot \tau_f = R \nabla v|_f \cdot \tau_f = \nabla v|_f \cdot R^\top \tau_f = \nabla v|_f \cdot \nu_f.
\end{equation*}
Since $\nabla v|_f \cdot \nu_f \in \mathcal{P}_1(f)$ with $v\in Z(T)$ if and only if $v\in \Zsing(T)$, we obtain
\begin{align}\label{eq:Vr}
  \Vred(T)&= \mathcal{P}_1(T)^2 + \bfcurl \Zred(T).
\end{align}
 It is possible to determine an explicit basis of $\Vred(T)$, but in our implementation this is computed on the fly.
\begin{lemma}[Reduced \Guzman{}--Neilan element {\cite[Sec.~6]{GuzmanNeilan2014}}]\label{lem:GNred} \leavevmode
  \begin{enumerate}
  \item 
    Any $v \in \Vred(T)$ is uniquely determined by the values of $v$ at vertices and the values of $v \cdot \nu_f$ at the edge midpoints, see Figure~\ref{fig:GNred} (left). 
    Any $q\in \Qspace(T)$ is uniquely determined by its value at the barycenter, see Figure~\ref{fig:GNred} (right).
  \item  
    The global space $\Vred$ is $C^0$-conforming and the global spaces are given by
    \begin{align*}
      \Vred &= \lbrace v \in 
      H^1(\Omega)^2
      \colon v|_T\in \Vred(T)\text{ for all }T\in \tria\rbrace,
      \\
      \Qred &= \lbrace v \in L^2(\Omega)\colon v|_T\in \Qred(T)\text{ for all }T\in \tria\rbrace. 
    \end{align*}
  \end{enumerate}
\end{lemma}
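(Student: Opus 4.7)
The plan is to reduce both assertions to the corresponding statements for the full \Guzman{}--Neilan element in Lemma~\ref{lem:GN}, using the edge-trace structure of $\Vred(T)$. The key observation is that since $\Vred(T) \subset \Vspace(T)$, the property~\eqref{eq:VinP2} gives $v|_{\edge} \in \mathcal{P}_2(\edge)^2$ for any $v \in \Vred(T)$ and edge $\edge \in \edges(T)$, while the defining constraint of $\Vred(T)$ additionally imposes $v|_{\edge} \cdot \tau_{\edge} \in \mathcal{P}_1(\edge)$. Together these describe the trace on an edge as an affine tangential component plus a quadratic normal component.

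First I would check the dimension count. Using the representation~\eqref{eq:Vr} together with $\dim \Zred(T) = 9$ from Lemma~\ref{lem:red-sing-Zien}, the fact that $\bfcurl$ has the one-dimensional kernel of constants, and $\mathcal{P}_1(T)^2 \cap \bfcurl \Zred(T) = \bfcurl \mathcal{P}_2(T)$ of dimension $5$, inclusion--exclusion yields $\dim \Vred(T) = 6 + 8 - 5 = 9$. This matches the nine proposed degrees of freedom ($2$ per vertex and $1$ per edge). For unisolvence, suppose $v \in \Vred(T)$ vanishes at all three vertices and has vanishing normal component at all three edge midpoints. Since $v|_{\edge_j}\cdot \tau_{\edge_j}$ is affine and vanishes at both endpoints of $\edge_j$, it also vanishes at $\midpoint(\edge_j)$. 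Thus all twelve degrees of freedom of $\Vspace(T)$ from Lemma~\ref{lem:GN} are zero for $v$, which forces $v=0$. The unisolvence statement for $\Qred(T) = \mathcal{P}_0(T)$ is immediate.

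Next I would establish the global conformity. Let $T_-,T_+ \in \tria$ share an edge $\edge$ with unit tangent $\tau_\edge$ and unit normal $\nu_\edge$, and let $v \in \Vred$. The two traces $v|_{\edge}$ from either side are in $\mathcal{P}_2(\edge)^2$, with tangential parts in $\mathcal{P}_1(\edge)$. The tangential components agree at the two endpoints (since the vertex values are single-valued degrees of freedom) and, being affine, they coincide on all of $\edge$. The normal components agree at the endpoints and at $\midpoint(\edge)$ (single-valued normal midpoint degree of freedom), so, being quadratic polynomials agreeing at three distinct points, they also coincide on $\edge$. Therefore $v$ is continuous across every interior edge, yielding $\Vred \subset C^0(\Omega)^2 \subset H^1(\Omega)^2$ and the claimed global characterization. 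For $\Qred$ the characterization is immediate since no interelement continuity is imposed.

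The main obstacle is conceptual rather than computational: one has to carefully separate the edge trace of an element of $\Vred(T)$ into its affine tangential part and its quadratic normal part, and recognize that the nine proposed interpolation conditions supply exactly the right data for both local unisolvence (by completing them to the twelve conditions of $\Vspace(T)$) and global trace continuity (by interpolation on an interval). Once this decomposition is in hand, the remainder of the argument reduces to elementary dimension counting and polynomial interpolation.
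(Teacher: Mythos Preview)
The paper does not supply its own proof of this lemma; it simply cites \cite[Sec.~6]{GuzmanNeilan2014}. Your argument is correct and self-contained: the dimension count via~\eqref{eq:Vr}, the reduction of local unisolvence to that of $\Vspace(T)$ in Lemma~\ref{lem:GN} by recovering the tangential midpoint values from the affine edge-trace constraint, and the interpolation argument for interelement continuity all go through cleanly. The one step you assert without justification, namely $\mathcal{P}_1(T)^2 \cap \bfcurl \Zred(T) = \bfcurl \mathcal{P}_2(T)$, is true---the inclusion $\supset$ is immediate from $\mathcal{P}_2(T) \subset \Zred(T)$, and for $\subset$ note that every element of $\bfcurl \Zred(T)$ is divergence-free while the divergence-free part of $\mathcal{P}_1(T)^2$ is exactly $\bfcurl \mathcal{P}_2(T)$---but it would strengthen the write-up to say so.
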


Since for the reduced \Guzman{}--Neilan element we have $\Vred \subset \Vspace$, also the corresponding discretely divergence-free subspace of $\Vred$ is exactly divergence-free, cf.~Remark~\ref{rmk:div-free}. 

\section{Rational polynomials}
\label{sec:rat-poly}

\noindent Certain types of rational polynomials occur in the 2D singular \Zienkiewicz and in the \Guzman{}--Neilan element, as introduced in Section~\ref{sec:FEspace}. 
For the implementation of those elements no exact quadrature is available in the literature. 
Instead, \cite{Schneier2015} presents an inexact quadrature that resolves the corner singularities by use of the Duffy transformation applied to the rational function, and then uses  Gau\ss{} quadrature to approximate the integral. 
Here, in Section~\ref{subsec:IntFormula} we present exact recursive quadrature formulas for a class of rational polynomials that include the ones in Section~\ref{sec:FEspace}. 
Furthermore, we show properties of rational polynomials in Section~\ref{subsec:DifAndReg} that we exploit in the recursion.  

Throughout this section $T = [v_0,v_1,v_2]$ denotes a triangle with barycentric coordinates $\lambda = (\lambda_0,\lambda_1,\lambda_2)$ with $\lambda_0 + \lambda_1 + \lambda_2 = 1$.
For multi-indices $\alpha= (\alpha_0,\alpha_1,\alpha_2)$ and $\beta=(\beta_0,\beta_1,\beta_2)$ in $\setN^3_0$ we define the rational function
\begin{align}\label{def:rat-fct}
	\frR^\alpha_\beta \coloneqq \frR^\alpha_\beta(\lambda) \coloneqq \frac{\lambda^\alpha}{(1-\lambda)^\beta} = \frac{\prod_{j=0}^2 \lambda_j^{\alpha_j}}{\prod_{j=0}^2 (1-\lambda_j)^{\beta_j}}.
\end{align}
With this notation and  the canonical basis vectors $e_j\in \mathbb{R}^3$, the rational bubble functions $B_{f_j}$ in~\eqref{eq:DefRatBubble} read
\begin{equation*}
B_{f_{j-1}} = \frR^{(2,2,2)-e_j}_{(1,1,1)-e_j}\qquad \text{for }j=1,2,3.
\end{equation*}
\subsection{Differentiation and regularity}\label{subsec:DifAndReg}
Let $\frR^\alpha_\beta\colon T \to \mathbb{R}$ be a rational polynomial with multi-indices $\alpha,\beta \in \mathbb{N}_0^3$ and let $\hat{\frR}^\alpha_\beta \colon  \widehat{T} \to \mathbb{R}$ be the function with $\hat{\frR}^\alpha_\beta \circ \lambda = \frR_\beta^\alpha$. 
To simplify the presentation, for $j=0,1,2$ we use the notation
\begin{equation}\label{eq:DefNotationLambda}
\nabla_\lambda \frR^\alpha_\beta \coloneqq (\nabla_{\lambda} \hat{\frR}^\alpha_\beta)\circ \lambda\qquad \text{and}\qquad \partial_{\lambda_j} \frR^\alpha_\beta  \coloneqq (\partial_{\lambda_j} \hat{\frR}^\alpha_\beta) \circ \lambda.
\end{equation}
The chain rule relates the gradients $\nabla_x \frR^\alpha_\beta \colon T\to \setR^2$ and $\nabla_\lambda \hat{\frR}^\alpha_\beta \colon \widehat{T} \to \mathbb{R}^3$ via 
\begin{equation}\label{eq:trafogradx}
\nabla_x \frR^\alpha_\beta
= 	\nabla_x (\hat{\frR}^\alpha_\beta \circ \lambda) 
= \nabla_{x} \lambda \,(\nabla_{\lambda} \hat{\frR}^\alpha_\beta)\circ \lambda = \nabla_{x} \lambda \,\nabla_{\lambda} \frR^\alpha_\beta.
\end{equation} 
This extends  to vector-valued functions in the obvious manner.
Furthermore, the Hessian matrix of $\frR^\alpha_\beta  $ reads 
\begin{equation}\label{eq:chainrule-2}
	\nabla_x^2 \frR^\alpha_\beta  
	= \nabla_x \lambda\, ((\nabla_{\lambda}^2 \hat{\frR}^\alpha_\beta)\circ \lambda )  (\nabla_x \lambda )^\top 
	= \nabla_x \lambda\, (\nabla_{\lambda}^2 \frR^\alpha_\beta)\,  (\nabla_x \lambda )^\top.
\end{equation}
These formulae allow us to differentiate rational polynomials, once we know $\nabla_x \lambda$ and $\partial_{\lambda_j} \frR^\alpha_\beta$. 
The computation of $\nabla_x \lambda$ is discussed in~\eqref{eq:nablaLam} below, and differentiation with respect to the barycentric coordinates is subject of the following lemma. 
\begin{lemma}[Multiplication and differentiation]
  \label{lem:Rmult-diff}
  For~$\alpha,\beta,\sigma,\tau \in \setN_0^3$  we have
  \begin{align}
    \frR^\alpha_\beta \cdot
     \frR^\sigma_\tau &= \frR^{\alpha+\sigma}_{\beta+\tau}, \label{eq:MultiplyRat}
    \\
    \partial_{\lambda_j} \frR^\alpha_\beta
    &= \alpha_j
    \frR^{\alpha-e_j}_\beta - \beta_j \frR^\alpha_{\beta+e_j} \quad \text{ for } j \in \{0,1,2\}. \label{eq:diffRat}
  \end{align}
\end{lemma}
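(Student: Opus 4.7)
The plan is a direct computation from the definition
\begin{equation*}
\frR^\alpha_\beta(\lambda) = \prod_{j=0}^2 \lambda_j^{\alpha_j}(1-\lambda_j)^{-\beta_j}.
\end{equation*}

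For the multiplication identity~\eqref{eq:MultiplyRat}, I would multiply the two fractions and combine powers coordinate-by-coordinate using $\lambda_j^{\alpha_j}\lambda_j^{\sigma_j} = \lambda_j^{\alpha_j+\sigma_j}$ and the analogous identity $(1-\lambda_j)^{\beta_j}(1-\lambda_j)^{\tau_j} = (1-\lambda_j)^{\beta_j+\tau_j}$ in each factor $j = 0,1,2$. Regrouping immediately gives $\frR^{\alpha+\sigma}_{\beta+\tau}$.

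For the differentiation identity~\eqref{eq:diffRat}, I would pass to the lifted function $\hat{\frR}^\alpha_\beta$ on $\widehat{T}$, where by the definition~\eqref{eq:DefNotationLambda} the coordinates $\lambda_0,\lambda_1,\lambda_2$ are treated as independent variables in $\mathbb{R}^3$. Since only the single factor $\lambda_j^{\alpha_j}(1-\lambda_j)^{-\beta_j}$ depends on $\lambda_j$, the multivariate product rule reduces the computation to the one-variable Leibniz rule applied to this factor. The power rule on $\lambda_j^{\alpha_j}$ gives the contribution $\alpha_j \lambda_j^{\alpha_j-1}(1-\lambda_j)^{-\beta_j}$, while the power rule on $(1-\lambda_j)^{-\beta_j}$, combined with the chain-rule derivative $\partial_{\lambda_j}(1-\lambda_j) = -1$, gives the contribution involving $(1-\lambda_j)^{-\beta_j-1}$. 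Reinserting the untouched factors $\prod_{k\neq j}\lambda_k^{\alpha_k}(1-\lambda_k)^{-\beta_k}$ repackages these two pieces as $\alpha_j \frR^{\alpha-e_j}_\beta$ and a $\beta_j$-multiple of $\frR^\alpha_{\beta+e_j}$.

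The proof poses no genuine obstacle; it is an elementary verification, and the only bit of care required is keeping the chain-rule sign from $\partial_{\lambda_j}(1-\lambda_j) = -1$ straight while recombining the product. The importance of the statement is notational and structural: together these two identities furnish the algebraic engine for the recursive integration formula to be developed in Section~\ref{subsec:IntFormula}, and, combined with the derivatives of the barycentric coordinates with respect to $x$, they also render the chain-rule formulas~\eqref{eq:trafogradx}--\eqref{eq:chainrule-2} explicitly computable on each triangle.
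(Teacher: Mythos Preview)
Your proposal is correct and follows essentially the same approach as the paper: the paper's proof is a two-line remark that the first identity follows directly from the definition of $\frR^\alpha_\beta$ and the second is a consequence of the quotient rule, which is exactly what you spell out in slightly more detail.
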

\begin{proof}
  The first identity follows directly by the definition of  $\frR^\alpha_\beta$ in~\eqref{def:rat-fct}. 
  The second identity is a consequence of the quotient rule.
\end{proof}

\begin{lemma}[Regularity]\label{lem:regularity}
	For $m \in \setN_0$ and $\alpha,\beta \in \setN^3$ we have $\frR^\alpha_\beta \in W^{m,p}(T)$ for $p \in [1,\infty)$ if and only if 
	$	\abs{\alpha} - \norm{\alpha + \beta}_{\infty}> m - {2}/{p}$, and 
	for $p  = \infty$ if and only if 
	$	\abs{\alpha} - \norm{\alpha + \beta}_{\infty}\geq m$.
	In particular, one has 
	\begin{enumerate}[label = (\roman*)]
	\item \label{itm:Wmpsmall}  $\frR^\alpha_\beta \in W^{m,p}(T)$ for $p \in [1,2)$ if 
    and only if
    $m-1 \leq \abs{\alpha} - \norm{\alpha+\beta}_\infty $,
  \item \label{itm:Wmplarge}  $\frR^\alpha_\beta \in W^{m,p}(T)$ for $p\in [2,\infty]$ if and only if $ m \leq \abs{\alpha} - \norm{\alpha+\beta}_\infty$.
	\end{enumerate}
\end{lemma}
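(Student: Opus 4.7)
The plan is to localize the regularity analysis at the three vertices of~$T$, which are the only possible singular points of~$\frR^\alpha_\beta$: away from $\{v_0,v_1,v_2\}$ the denominator factors $(1-\lambda_j)^{\beta_j}$ are bounded below, so $\frR^\alpha_\beta$ is smooth there. Using a smooth partition of unity, the question reduces to characterizing $W^{m,p}$-regularity on a neighborhood $U_j$ of each vertex~$v_j$.

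Near~$v_j$ I introduce the affine coordinates $(s,t) \coloneqq (\lambda_{j+1},\lambda_{j+2})$, in which $v_j$ corresponds to the origin and $\lambda_j = 1-s-t$. A direct factorization gives
\begin{equation*}
\frR^\alpha_\beta = \phi_j(s,t)\, g_j(s,t), \qquad \phi_j(s,t) \coloneqq \frac{(1-s-t)^{\alpha_j}}{(1-s)^{\beta_{j+1}}(1-t)^{\beta_{j+2}}}, \qquad g_j(s,t) \coloneqq \frac{s^{\alpha_{j+1}} t^{\alpha_{j+2}}}{(s+t)^{\beta_j}},
\end{equation*}
where $\phi_j$ is smooth near the origin with $\phi_j(0,0)=1$. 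Hence the $W^{m,p}$-regularity of~$\frR^\alpha_\beta$ near~$v_j$ coincides with that of the positively homogeneous function~$g_j$ of degree $d_j \coloneqq \abs{\alpha} - (\alpha_j+\beta_j)$.

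Passing to polar coordinates $(s,t) = (r\cos\theta, r\sin\theta)$, $\theta\in(0,\pi/2)$, I write $g_j = r^{d_j} h_j(\theta)$ with $h_j$ smooth and strictly positive on the open quadrant, and any $k$-th derivative of $g_j$ is again homogeneous of degree $d_j - k$ with non-vanishing angular profile. Consequently the $W^{m,p}$-norm on $U_j \cap T$ is controlled by the radial integral $\int_0^\varepsilon r^{p(d_j-m)+1}\,\mathrm{d}r$, which is finite iff $d_j > m - 2/p$ for $p < \infty$, while $L^\infty$-boundedness of the derivatives up to order~$m$ requires $d_j \geq m$. Combining the three vertices (the binding one realizing $\norm{\alpha+\beta}_\infty$) yields the main equivalence. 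Items~(i) and~(ii) then follow from the integrality of $\abs{\alpha} - \norm{\alpha+\beta}_\infty$: for $p \in [1,2)$ one has $2/p \in (1,2]$ so the strict inequality becomes $\abs{\alpha}-\norm{\alpha+\beta}_\infty \geq m-1$, while for $p \in [2,\infty]$ it becomes $\geq m$.

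The main technical obstacle lies in the necessity direction of the third step: I have to rule out that the leading singularity $r^{d_j - m}$ is cancelled simultaneously in every $m$-th derivative of~$g_j$. I plan to settle this by exhibiting a concrete pure derivative such as $\partial_s^m g_j$ and checking that its leading-order coefficient is a rational function of $\theta$ that does not vanish identically on $(0,\pi/2)$; this forces $\frR^\alpha_\beta \notin W^{m,p}$ as soon as $d_j \leq m - 2/p$, closing the equivalence.
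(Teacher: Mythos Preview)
Your localization at the vertices and extraction of the homogeneous model $g_j$ is exactly what the paper does for the base case $m=0$: the paper phrases it as ``$\lambda_0$, $(1-\lambda_1)$, $(1-\lambda_2)$ are equivalent to a constant in a neighbourhood of the origin'' and then integrates $r^{p(\alpha_{j+1}+\alpha_{j+2}-\beta_j)}\,r\,\mathrm{d}r$ in polar coordinates, arriving at the same threshold $d_j > -2/p$.

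Where the two proofs diverge is the passage from $m=0$ to general~$m$. The paper does \emph{not} argue via homogeneity of~$g_j$; instead it runs an induction on~$m$ using the differentiation identity $\partial_{\lambda_j}\frR^\alpha_\beta = \alpha_j\,\frR^{\alpha-e_j}_\beta - \beta_j\,\frR^{\alpha}_{\beta+e_j}$ established earlier, checks that each summand satisfies $\abs{\alpha'}-\norm{\alpha'+\beta'}_\infty \geq \abs{\alpha}-\norm{\alpha+\beta}_\infty-1$, and applies the induction hypothesis. Sharpness is dispatched in one line by saying the two summands have ``linearly independent singularities'' and therefore cannot cancel. Your homogeneity route is a genuine alternative: it treats all orders~$m$ simultaneously and keeps the analysis on a single explicit model function, at the price of the non-vanishing check you flag. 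That check is not hard once you observe that for $\beta_j\geq 1$ the function~$g_j$ is never a polynomial (the factors $s^{\alpha_{j+1}}t^{\alpha_{j+2}}$ and $(s+t)^{\beta_j}$ are coprime), so its $m$-th derivatives cannot all vanish identically; any nonzero one is homogeneous of degree $d_j-m$ and real-analytic, hence its angular profile is nonzero almost everywhere in~$\theta$. The paper's inductive argument trades this computation for a structural one that stays within the class~$\frR^\alpha_\beta$, but is comparably terse about its own non-cancellation step.
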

\begin{proof}
	Let us start proving the case for $m = 0$. 
	
	Let $p\in [1,\infty)$. 
	Note that the singularities occur only in vertices and hence it suffices to verify integrability in the neighborhoods of vertices. 
	By symmetry it suffices to consider the vertex $v_0$ which is without loss of generality at the origin $v_0 = 0$. 
	Since the terms $\lambda_0$, $(1-\lambda_1)$, and $(1 - \lambda_2)$ are equivalent to a constant in a neighborhood of the origin we obtain for small values $\varepsilon >0$
	\begin{align*}
    \begin{aligned}
    		\int_{\{x \in T\colon \abs{x} \leq \epsilon\}} \left(\frac{\lambda^\alpha}{ (1-\lambda)^\beta}\right)^p \dx& \eqsim \int_{\{x \in T\colon \abs{x} \leq \epsilon\}} \left(\frac{\lambda_1^{\alpha_1}\lambda_2^{\alpha_2}}{ (1-\lambda_0)^{\beta_0}}\right)^p \dx \\
		&\eqsim \int_0^{\epsilon} r^{p(\alpha_1+\alpha_2-\beta_0)} r \dr.
    \end{aligned}
	\end{align*}
	The integral is finite if and only if $p(\alpha_1+\alpha_2 - \beta_0) + 1 > -1$. 
	This condition is equivalent to $\alpha_1+\alpha_2 - \beta_0  > -2/p$ or in other words $\abs{\alpha}-\alpha_0-\beta_0 > -2/p$. 
	By symmetry in fact we require $\abs{\alpha}-\norm{\alpha + \beta}_{\infty} > -2/p$, which proves the claim.
	Similarly, for $p = \infty$, in the neighborhood of $v_0 = 0$, we have that 
	\begin{align*}
		\frac{\lambda^\alpha}{ (1-\lambda)^\beta} \eqsim \frac{\lambda_1^{\alpha_1}\lambda_2^{\alpha_2}}{ (1-\lambda_0)^{\beta_0}} \lesssim r^{\alpha_1+\alpha_2-\beta_0}
	\end{align*}
	is bounded, if $\alpha_1+\alpha_2-\beta_0 \geq 0$.   This is equivalent to $\abs{\alpha} - \alpha_0 - \beta_0 \geq 0$, which proves the claim for $p = \infty$. 
	This estimate is sharp if we consider a path from the barycenter to the vertex~$v_0=0$.
	
	We proceed by induction over~$m$. For $j = 0,1,2$ the identity~\eqref{eq:diffRat} states
	\begin{align}
    \label{eq:regularity-aux1}
		\partial_{\lambda_j} \frR^\alpha_\beta = \alpha_j\frR^{\alpha-e_j}_\beta -\beta_j\frR^{\alpha}_{\beta+e_j}.
	\end{align}
  	The terms vanish for~$\alpha_j=0$ or $\beta_j=0$, respectively. 
  	Otherwise, for the corresponding indices we obtain
	\begin{align*}
    \abs{\alpha-e_j} -\norm{\alpha-e_j + \beta}_\infty &\geq
    \abs{\alpha-e_j} -\norm{\alpha+\beta}_\infty =
    \abs{\alpha} -\norm{\alpha + \beta}_\infty-1,
    \\
    \abs{\alpha} -\norm{\alpha + \beta+e_j}_\infty &\geq
    \abs{\alpha} -\norm{\alpha+\beta}_\infty -1.
  \end{align*}
  In both cases 
  $p \in [1,\infty)$ and $p = \infty$
   we have $\frR^{\alpha-e_j}_\beta,\frR^{\alpha}_{\beta+e_j} \in W^{m-1,p}(T)$ for $p\in[1,\infty]$ by induction hypothesis and the formula in~\eqref{eq:trafogradx}.
  Hence, $\frR^\alpha_\beta \in W^{m,p}(T)$ follows by~\eqref{eq:trafogradx} and~\eqref{eq:regularity-aux1}. The sharpness follows from the fact that the singularities of~$\frR^{\alpha-e_j}_\beta$ and $\frR^{\alpha}_{\beta+e_j}$ are linearly independent and hence cannot cancel.  
  
  The statements in \ref{itm:Wmpsmall} and \ref{itm:Wmplarge} are an immediate consequence.
\end{proof}

\subsection{Integration formula}\label{subsec:IntFormula}
We are now in the position to derive recursive formulas for the (mean) integral of the rationals functions~$\frR^\alpha_\beta$. Recall, that~$T \subset \RR^2$ is a triangle. We define for $\alpha,\beta \in \setN^3_0$
\begin{align}\label{eq:Integral}
  \mathcal{I}(\alpha,\beta)
  &\coloneqq \dashint_T \frR^\alpha_\beta(\lambda(x))\dx = \dashint_T \frac{\lambda^\alpha}{(1-\lambda)^\beta}\dx. 
\end{align}
Note that  by affine transformation this definition is independent of~$T$, so without loss of generality we may use
\begin{align}
  \label{eq:defT}
  T = [ (0,0)^\top,(1,0)^\top,(0,1)^\top].
\end{align}
By Lemma~\ref{lem:regularity} we have
\begin{align}
  \label{eq:I-finite}
  \mathcal{I}(\alpha,\beta) < \infty\qquad\text{if and only if}\qquad	\lVert \alpha +\beta \rVert_\infty \leq \abs{\alpha} +1.
\end{align}
\subsubsection{\texorpdfstring{Special case $\beta_0=\beta_1=0$}{Special case beta0=beta1=0}}\label{subsubsec:SpecialCase}
The following lemma determines the value $\mathcal{I}(\alpha,0)$, i.e., the special case of polynomial integrand, as 
  \begin{align}\label{eq:I-poly}
	\mathcal{I}(\alpha,0) &=2 \frac{\alpha!}{(2+\abs{\alpha})!} = 2 \frac{\alpha_0!\alpha_1!\alpha_2!}{(2 + \alpha_0 + \alpha_1+\alpha_2)!}.
\end{align}

\begin{lemma}[Polynomials]\label{lem:QuadPolynomials}
  For any $d$-simplex $T$ with $d \in \mathbb{N}$ and multi-index $\alpha = (\alpha_0, \ldots, \alpha_d)\in \setN^{d+1}_0$, one has the identity 
  \begin{align*}
    \dashint_{T} \lambda^{\alpha} \dx  &=
    \frac{ d! \alpha!}{(d + \abs{\alpha})!}
    =  \frac{d! \alpha_0!\alpha_1!\alpha_2!\dots \alpha_d!}{(d + \alpha_0 + \alpha_1+\alpha_2+ \dots + \alpha_d)!}.
  \end{align*}
\end{lemma}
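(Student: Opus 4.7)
The plan is to first reduce to the reference simplex via affine invariance, and then to establish the formula by induction on the dimension~$d$ using a fibration and the Beta integral.

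First I would observe that, since the statement involves the mean integral and the barycentric coordinates transform equivariantly under affine maps, the quantity $\dashint_T \lambda^\alpha \dx$ is independent of~$T$. Hence it suffices to fix $T$ as the reference simplex $\widehat{T}_d \coloneqq \{x \in \RR^d \colon x_i \geq 0,\, x_1 + \dots + x_d \leq 1\}$, which has Lebesgue measure $1/d!$. On this simplex the barycentric coordinates are $\lambda_j(x) = x_j$ for $j=1,\dots,d$ and $\lambda_0(x) = 1 - x_1 - \dots - x_d$. Thus the claim reduces to the Dirichlet-type identity
\begin{equation*}
  J_d(\alpha) \coloneqq \int_{\widehat{T}_d} (1 - x_1 - \dots - x_d)^{\alpha_0} x_1^{\alpha_1} \cdots x_d^{\alpha_d} \dx = \frac{\alpha_0!\,\alpha_1!\cdots \alpha_d!}{(d+\abs{\alpha})!},
\end{equation*}
from which the result follows after multiplying by $d!$.

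The base case $d = 1$ is the classical Beta integral $\int_0^1 (1-x)^{\alpha_0} x^{\alpha_1} \dx = \alpha_0!\,\alpha_1!/(\alpha_0+\alpha_1+1)!$. For the inductive step $d \to d+1$, I would slice $\widehat{T}_{d+1}$ along the last coordinate, writing $s = x_{d+1}$ and performing the rescaling $y_i = x_i/(1-s)$ for $i=1,\dots,d$ which maps the slice bijectively onto $\widehat{T}_d$ with Jacobian $(1-s)^d$. This separates the integral as
\begin{equation*}
  J_{d+1}(\alpha) = \int_0^1 s^{\alpha_{d+1}} (1-s)^{\alpha_0 + \dots + \alpha_d + d}\ds \cdot J_d(\alpha_0,\dots,\alpha_d),
\end{equation*}
where the $s$-integral is another Beta integral evaluating to $\alpha_{d+1}!\,(\alpha_0 + \dots + \alpha_d + d)! / (\alpha_0 + \dots + \alpha_{d+1} + d + 1)!$. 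Combining this with the inductive hypothesis for $J_d$ and telescoping the factorials yields exactly $J_{d+1}(\alpha) = \alpha!/((d+1) + \abs{\alpha})!$.

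I do not expect any serious obstacle here; the statement is a well-known Dirichlet integral and the above induction is standard. The only care needed is the bookkeeping in the rescaling step, in particular verifying that the exponent of $(1-s)$ produced by the Jacobian and by the relation $1 - x_1 - \dots - x_{d+1} = (1-s)(1 - y_1 - \dots - y_d)$ combines correctly so that the remaining integral over $\widehat{T}_d$ is precisely $J_d(\alpha_0,\dots,\alpha_d)$.
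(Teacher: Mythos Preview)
Your proof is correct, but it takes a genuinely different route from the paper's. After the same affine-invariance reduction, the paper does \emph{not} induct on the dimension. Instead it multiplies the integral by $\Gamma(d+\abs{\alpha}+1)=\int_0^\infty t^{d+\abs{\alpha}}e^{-t}\dt$ and performs the single substitution $(y_0,\dots,y_d)=t\,(1-x_1-\cdots-x_d,\,x_1,\dots,x_d)$, which maps $(0,\infty)\times T$ bijectively onto $(0,\infty)^{d+1}$. This decouples the integral into a product $\prod_{j=0}^d\int_0^\infty z^{\alpha_j}e^{-z}\dz=\prod_j\alpha_j!$, giving the formula in one step without an inductive hierarchy. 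Your slicing-plus-Beta approach is equally standard and arguably more elementary, but the paper's Gamma trick has a concrete payoff later: it is reused verbatim in the proof of Lemma~\ref{lem:QuadRat-1} for the rational integrand $\lambda^\alpha/(1-\lambda_2)^{\beta_2}$, where the same substitution again separates variables. An inductive argument via Beta integrals would not extend as cleanly to that case.
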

\begin{proof}
  This known identity is for example presented in~\cite[p.~222]{Stroud1971} for~$\alpha_0=0$. The general case with proof can be found in~\cite[Eq.~2.3]{GrundmannMoeller78}. 
  For the convenience of the reader we include a short alternative proof for polynomials on a $d$-simplex, for dimension $d\in \mathbb{N}$, and $\alpha \in \mathbb{N}^{d+1}_0$. 
  We also apply the same technique later in Proposition~\ref{pro:intrational-new}. 
  Let~$T$ be as in~\eqref{eq:defT}. Using the Gamma function $\Gamma(s) = \int_0^\infty t^{s-1} \exp(-t)\dt$, which satisfies $\Gamma(s) = (s-1)!$ for $s \in \mathbb{N}$, we obtain
  \begin{align*}
  	\lefteqn{\Gamma(d+\abs{\alpha}+1) 
  		\dashint_T \lambda(x)^\alpha\dx } \qquad \\
  	&= d!\int_0^\infty \int_T (1-x_1-x_2 - \dots - x_d)^{\alpha_0} x_1^{\alpha_1} x_2^{\alpha_2} \cdots x_d^{\alpha_d} \, t^{d+\abs{\alpha}} \exp(-t) \dx \dt.
  \end{align*}
  The substitution $(y_0,y_1,\ldots, y_d ) 
  = t(1-x_1-\dots - x_d, x_1,\dots,x_d)$ leads to
  \begin{align*}
  	\lefteqn{\Gamma(d+\abs{\alpha}+1) \dashint_T \lambda(x)^\alpha\dx } \qquad &
  	\\
  	&=d!\,  \int_0^\infty  \dots \int_0^\infty y_0^{\alpha_0} \dotsm y_d^{\alpha_d} \exp(-y_0-\dots - y_d)\dy_0 \dy_1 \dots \dy_d \\
  	& =d!\, \prod_{j = 0}^{d} \int_0^\infty z^{\alpha_j} \exp(-z) \dz	
  	 =d!\, \prod_{j = 0}^{d} \Gamma(\alpha_j + 1) = d!\alpha!. 
  \end{align*}
  This proves the claim. 
\end{proof}

The following lemma reveals another special case with explicit value of $\mathcal{I}(\alpha,\beta)$ generalizing the previous lemma with $\beta = (0, 0, \beta_2)$ for some $\beta_2 \in \mathbb{N}_0$ and $\alpha \in \mathbb{N}_0^3$. 

\begin{lemma}\label{lem:QuadRat-1}
For any triangle $T$, 
 for multi-indices $\alpha = (\alpha_0, \alpha_1, \alpha_2)\in \setN^{3}_0$, and $\beta = (0,0,\beta_2)$ with $\beta_2 \in \mathbb{N}_0$ such that $\norm{\alpha + \beta}_{\infty} \leq \abs{\alpha} + 1$, one has the identity  
\begin{equation}\label{eq:I-rat-special}
	\dashint_{T} \frac{\lambda^{\alpha}}{(1 - \lambda)^\beta} \dx    
	=  2\frac{\alpha_0! \alpha_1!\alpha_2!}{(\abs{\alpha}-\beta_2 +2)!} \frac{(\alpha_0+\alpha_1+1-\beta_2)!}{(\alpha_0+\alpha_1+1)!}.
\end{equation}
\end{lemma}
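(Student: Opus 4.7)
The plan is to reduce the claim to a direct product of two Beta integrals via an elementary substitution, rather than invoking the Gamma function trick from Lemma~\ref{lem:QuadPolynomials}.

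First I would use affine invariance, as justified below~\eqref{eq:Integral}, to reduce to the reference triangle $T = [(0,0)^\top, (1,0)^\top, (0,1)^\top]$, so that $\lambda_0 = 1 - x_1 - x_2$, $\lambda_1 = x_1$, $\lambda_2 = x_2$, and $|T| = 1/2$. The decisive observation for the special form of $\beta$ is that
\begin{equation*}
  1 - \lambda_2 = \lambda_0 + \lambda_1 = 1 - x_2,
\end{equation*}
so that the denominator depends only on $x_2$. The integrand on $T$ becomes
\begin{equation*}
  \frac{(1-x_1-x_2)^{\alpha_0}\, x_1^{\alpha_1}\, x_2^{\alpha_2}}{(1-x_2)^{\beta_2}}.
\end{equation*}

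Next I would apply, for fixed $x_2 \in (0,1)$, the substitution $x_1 = (1-x_2)\, u$ with $u \in (0,1)$. This gives $1-x_1-x_2 = (1-x_2)(1-u)$ and $\mathrm{d}x_1 = (1-x_2)\,\mathrm{d}u$, so the powers of $(1-x_2)$ collapse into a single factor $(1-x_2)^{\alpha_0 + \alpha_1 + 1 - \beta_2}$ and the double integral factorises:
\begin{equation*}
  \int_T \frac{\lambda^\alpha}{(1-\lambda)^\beta}\dx
  = \Bigl(\int_0^1 x_2^{\alpha_2}(1-x_2)^{\alpha_0+\alpha_1+1-\beta_2}\, \mathrm{d}x_2\Bigr) \Bigl(\int_0^1 u^{\alpha_1}(1-u)^{\alpha_0}\,\mathrm{d}u\Bigr).
\end{equation*}
Both factors are Beta functions, and writing $B(p,q) = (p-1)!(q-1)!/(p+q-1)!$ gives $B(\alpha_1+1,\alpha_0+1) = \alpha_0!\,\alpha_1!/(\alpha_0+\alpha_1+1)!$ and $B(\alpha_2+1,\alpha_0+\alpha_1+2-\beta_2) = \alpha_2!\,(\alpha_0+\alpha_1+1-\beta_2)!/(|\alpha|-\beta_2+2)!$. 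Multiplying by $|T|^{-1} = 2$ yields exactly the right-hand side of~\eqref{eq:I-rat-special}.

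The only subtle point is well-definedness of the second Beta integral, which requires $\alpha_0 + \alpha_1 + 1 - \beta_2 > -1$, equivalently $\beta_2 \leq \alpha_0 + \alpha_1 + 1$. This is precisely guaranteed by the hypothesis $\lVert \alpha+\beta \rVert_\infty \leq |\alpha|+1$, since $\alpha_2 + \beta_2 \leq |\alpha|+1$ rearranges to exactly this inequality; consequently the factorial $(\alpha_0+\alpha_1+1-\beta_2)!$ in the claimed formula is a nonnegative integer and the computation is legitimate. I expect no genuine obstacle here — the structural input is the identity $1-\lambda_2 = \lambda_0+\lambda_1$ which permits the separation of variables; all subsequent steps are routine Beta-function arithmetic.
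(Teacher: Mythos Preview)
Your proof is correct and genuinely different from the paper's. The paper proceeds in parallel with Lemma~\ref{lem:QuadPolynomials}: it multiplies by $\Gamma(|\alpha|-\beta_2+3)$, passes to the cone via $(y_0,y_1,y_2)=t\lambda$, integrates out $y_2$ to produce the factor $\alpha_2!$, and then resolves the remaining $(y_0,y_1)$-integral by the further substitution $s=y_0+y_1$, $z=y_0/s$, which is where the one-dimensional Beta integral (Lemma~\ref{lem:QuadPolynomials} for $d=1$) enters. Your Duffy-type change $x_1=(1-x_2)u$ achieves the same factorisation into two one-dimensional Beta integrals in a single step, without introducing the auxiliary variable $t$. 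Both arguments rest on the same structural observation that $1-\lambda_2=\lambda_0+\lambda_1$ permits separation of variables; the paper's route has the aesthetic benefit of mirroring the polynomial case, whereas yours is shorter and stays entirely within the two-dimensional picture.
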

\begin{proof}
We proceed similarly as in Lemma~\ref{lem:QuadPolynomials} using the Gamma function~$\Gamma(s) = \int_0^\infty t^{s-1} \exp(-t)\,dt$ with $\Gamma(s+1)=s!$ for $s \in \setN_0$. Let~$T$ be as in~\eqref{eq:defT}.  
By the assumption $\norm{\alpha + \beta}_{\infty} \leq \abs{\alpha} + 1$ the integral is finite, see Lemma~\ref{lem:regularity}. 
With this and the fact that $\beta_1 = 0$ it follows that 
\begin{align*}
2+\abs{\alpha}-\beta_2 +1 \geq 1 + \norm{\alpha+\beta}_\infty - \beta_2+1 \geq 2.
\end{align*}
For this reason $\Gamma(2 + |\alpha| - \beta_2  +1)$ is well-defined. 
Employing the substitution $(y_0,y_1,y_2) = t(1-x_1- x_2, x_1,x_2)$ we obtain
\begin{align*}
	\mathrm{I} &\coloneqq \Gamma(2+\abs{\alpha}-\beta_2+1) \dashint_T \frac{\lambda(x)^\alpha}{(1-\lambda_2(x))^{\beta_2}}\dx
	\\
	&= 2 \int_0^\infty \int_T \frac{(1-x_1-x_2)^{\alpha_0} x_1^{\alpha_1} x_2^{\alpha_2}}{(1-x_2)^{\beta_2}} t^{2+\abs{\alpha}-\beta_2} \exp(-t) \dx \dt
	\\
	&= 2\int_0^\infty \int_0^\infty \int_0^\infty \frac{y_0^{\alpha_0} y_1^{\alpha_1} y_2^{\alpha_2}}{(y_0+y_1)^{\beta_2}} \exp(-y_0-y_1-y_2) \dy_0 \dy_1 \dy_2
	\\
	&= 2\alpha_2! \int_0^\infty \int_0^\infty \frac{y_0^{\alpha_0} y_1^{\alpha_1}}{ (y_0+y_1)^{\beta_2}} \exp(-y_0-y_1)\dy_0\dy_1.
\end{align*}
We substitute $s = y_0+y_1$, and then $z = y_0/s$ and use Lemma~\ref{lem:QuadPolynomials} for~$n=1$ to find
\begin{align*}
	\mathrm{I}
	&= 2\alpha_2! \int_0^\infty \int_0^s \frac{y_0^{\alpha_0} (s-y_0)^{\alpha_1}}{ s^{\beta_2}} \exp(-s)\dy_0\ds
	\\
	&= 2\alpha_2! \int_0^\infty \int_0^1 z^{\alpha_0} (1-z)^{\alpha_1} \, \dz\, s^{\alpha_0+\alpha_1-\beta_2+1}  \,\exp(-s) \ds
	\\
	&= 2\alpha_2! \frac{\alpha_0! \alpha_1!}{(\alpha_0+\alpha_1+1)!} (\alpha_0+\alpha_1+1-\beta_2)!.
\end{align*}
With $\Gamma(s) = (s-1)!$ for $s \in \mathbb{N}$ this proves the claim.
\end{proof}

\subsubsection{\texorpdfstring{Special case $\alpha_0=\beta_0=0$}{Special case alpha0=beta0=0}}

	In case that~$\alpha_0=\beta_0=0$ the integral $\mathcal{I}(\alpha,\beta)$ can be split by Fubini's theorem, leading to
	\begin{equation}\label{def:J}
		\begin{aligned}
			\mathcal{J}(\alpha_1,\alpha_2,\beta_1,\beta_2)& \coloneqq   \mathcal{I}((0,\alpha_1,\alpha_2),(0,\beta_1,\beta_2)) 
			\\
			&\hphantom{:}=
			2	\int_0^1 \frac{y^{\alpha_2}}{(1-y)^{\beta_2}} \int_0^{1-y}  \frac{x^{\alpha_1}}{(1-x)^{\beta_1}} \dx \dy.
		\end{aligned}
	\end{equation}
	To evaluate~$\mathcal{J}(\alpha_1,\alpha_2,\beta_1,\beta_2)$ we denote the inner integral by
	\begin{align}\label{def:Jx}
		\mathcal{J}^x(\alpha_1,\beta_1,y)& \coloneqq \int_0^{1-y} \frac{x^{\alpha_1}}{(1-x)^{\beta_1}} \dx\qquad \text{for } y\in (0,1).
	\end{align}
Hence, we have the identity 
	\begin{align}
		\label{eq:JvsJy}
		\mathcal{J}(\alpha_1,\alpha_2,\beta_1,\beta_2) = 
		2
		\int_0^1 \frac{y^{\alpha_2}}{(1-y)^{\beta_2}} \mathcal{J}^x(\alpha_1,\beta_1,y)\dy.
	\end{align}

	\begin{lemma}
		\label{lem:integralI}
		We have for all $\alpha_1,\beta_1 \in \setN_0$ and $y\in (0,1)$
		\begin{align*}
			\mathcal{J}^x(\alpha_1,\beta_1,y)=
			\begin{cases}
				{\displaystyle \frac{(1-y)^{\alpha_1 + 1}}{\alpha_1 + 1}}
				&\text{if }\beta_1 = 0,\\[1mm]
				{\displaystyle - \sum_{j=1}^{\alpha_1} \tfrac{1}{j}(1-y)^j-\log(y)} 
				&\text{if }\beta_1 = 1,\\[1mm]
				{\displaystyle
					\frac{\beta_1 - \alpha_1 - 2}{\beta_1 - 1}\mathcal{J}^x(\alpha_1,\beta_1\!-\!1,y) + 
					\frac{1}{\beta_1-1} \frac{(1\!-\!y)^{\alpha_1 + 1}}{y^{\beta_1-1}} } &\text{if }\beta_1 > 1.
			\end{cases}
		\end{align*}
	\end{lemma}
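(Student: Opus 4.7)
My plan is to handle the three cases by case analysis on $\beta_1$, each with elementary calculus techniques.

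For the case $\beta_1=0$, the integrand reduces to the polynomial $x^{\alpha_1}$ and direct antidifferentiation gives $(1-y)^{\alpha_1+1}/(\alpha_1+1)$.

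For the case $\beta_1=1$, I would use the polynomial long division identity
\begin{equation*}
\frac{x^{\alpha_1}}{1-x} = \frac{1}{1-x} - \sum_{j=0}^{\alpha_1-1} x^j,
\end{equation*}
which follows from $1-x^{\alpha_1} = (1-x)\sum_{j=0}^{\alpha_1-1} x^j$. Integrating term by term on $[0,1-y]$ produces $-\log(y)$ from the first term and $\sum_{j=0}^{\alpha_1-1}(1-y)^{j+1}/(j+1)$ from the sum, which after re-indexing $j\mapsto j-1$ yields the claimed formula.

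For the case $\beta_1 > 1$, the key step is to find an antiderivative whose differentiation produces the stated recursion that keeps $\alpha_1$ fixed. I would compute
\begin{equation*}
\frac{d}{dx}\left[\frac{x^{\alpha_1+1}}{(1-x)^{\beta_1-1}}\right] = (\alpha_1+1)\frac{x^{\alpha_1}}{(1-x)^{\beta_1-1}} + (\beta_1-1)\frac{x^{\alpha_1+1}}{(1-x)^{\beta_1}}
\end{equation*}
and then apply the crucial rewriting $x^{\alpha_1+1} = x^{\alpha_1}\bigl(1-(1-x)\bigr)$ to the second term. This trades a power of $x$ for a power of $(1-x)$ and yields
\begin{equation*}
\frac{d}{dx}\left[\frac{x^{\alpha_1+1}}{(1-x)^{\beta_1-1}}\right] = (\alpha_1+2-\beta_1)\frac{x^{\alpha_1}}{(1-x)^{\beta_1-1}} + (\beta_1-1)\frac{x^{\alpha_1}}{(1-x)^{\beta_1}}.
\end{equation*}
Integrating from $0$ to $1-y$ and solving for $\mathcal{J}^x(\alpha_1,\beta_1,y)$ gives the recursion after dividing by $\beta_1-1$, which is nonzero since $\beta_1>1$.

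The only subtle point is locating the correct antiderivative in the third case: a naive integration by parts with $u=x^{\alpha_1}$ would drop $\alpha_1$ to $\alpha_1-1$, spoiling the desired recursion. The trick is to instead consider $x^{\alpha_1+1}/(1-x)^{\beta_1-1}$ and use the algebraic identity $x = 1-(1-x)$ to restore the exponent $\alpha_1$. Apart from this, the proof is routine.
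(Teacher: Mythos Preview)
Your proof is correct and follows essentially the same approach as the paper's. The only organizational difference is in the case $\beta_1>1$: the paper first splits $\mathcal{J}^x(\alpha_1,\beta_1,y)=\mathcal{J}^x(\alpha_1,\beta_1-1,y)+\mathcal{J}^x(\alpha_1+1,\beta_1,y)$ via $1=(1-x)+x$ and then integrates by parts on the second summand, whereas you differentiate $x^{\alpha_1+1}/(1-x)^{\beta_1-1}$ directly and apply $x=1-(1-x)$ afterward; both routes use the same antiderivative and the same algebraic identity, so the computations are equivalent.
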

	\begin{proof}
		The first identity follows from the integration of polynomials in 1D.
		We continue with~$\beta_1=1$. 
		In case $\alpha_1 = 0$ we have
		\begin{align*}
			\mathcal{J}^x(0,1,y)= - \int_0^{1-y} \frac{1}{x-1} \dx = - \log(y).
		\end{align*}
		For $\alpha_1 > 0$ the formula for geometric sums leads to
		\begin{align*}
			\mathcal{J}^x(0,1,y)&=\int_0^{1-y} \frac{x^{\alpha_1}}{1-x} \dx = - \int_0^{1-y} \frac{x^{\alpha_1} - 1}{x-1} \dx - \int_0^{1-y} \frac{1}{x-1} \dx \\
			&= - \sum_{j=0}^{\alpha_1-1}\int_0^{1-y}  x^j \dx - \int_0^{1-y} \frac{1}{x-1} \dx 
			 = -\sum_{j=1}^{\alpha_1} \frac{1}{j}(1-y)^j - \log(y).
		\end{align*}
		Finally, let $\beta_1 > 1$. 
		We use the identity  
		\begin{align*}
			\mathcal{J}^x(\alpha_1,\beta_1,y) 
			&=   \int_0^{1-y}\! \frac{x^{\alpha_1}}{(1-x)^{\beta_1}}\dx
			 = \int_0^{1-y}\!\! \frac{ x^{\alpha_1}}{(1-x)^{\beta_1-1}}\dx + \int_0^{1-y}\! \frac{x^{\alpha_1+1}}{(1-x)^{\beta_1}}\dx
			\\
			&=     \mathcal{J}^x(\alpha_1,\beta_1-1,y) + \mathcal{J}^x(\alpha_1+1,\beta_1,y).
		\end{align*}
		Integration by parts yields for the second term
		\begin{align*}
			\mathcal{J}^x(\alpha_1+1,\beta_1,y) &=
			\int_0^{1-y} \frac{x^{\alpha_1+1}}{(1-x)^{\beta_1}}\dx
			\\
			&= \bigg[ \frac{x^{\alpha_1+1}(1-x)^{1-\beta_1}}{\beta_1-1}\bigg]^{1-y}_0 - \int_0^{1-y} \frac{\alpha_1+1}{\beta_1-1} \frac{x^{\alpha_1}}{(1-x)^{\beta_1-1}}\dx
			\\
			&= \frac{(1-y)^{\alpha_1+1} y^{1-\beta_1}}{\beta_1-1} - \frac{\alpha_1+1}{\beta_1-1}
			\mathcal{J}^x(\alpha_1,\beta_1-1,y).
		\end{align*}
		Combining theses identities concludes the proof.
	\end{proof}
	
		Since expressions for $\mathcal{J}^x$ are available by Lemma~\ref{lem:integralI}, certain integrals in $y$ still have to be computed in order to evaluate the integral $\mathcal{J}$ in~\eqref{eq:JvsJy}. 
	
	\begin{lemma}\label{lem:IntegralsII}
		Let $\alpha_2,\gamma\in \setN_0$, then we have with polygamma function $\psi^{(1)}$ that
		\begin{subequations}
			\begin{align}
				\int_0^1  y^{\alpha_2} (1-y)^\gamma \dy & = \frac{\alpha_2! \gamma!}{(\alpha_2 + \gamma + 1)!},\label{eq:FubiniIntegrals2}\\
				\int_0^1 y^{\alpha_2} \log(y) \dy & = -\frac{1}{(\alpha_2+1)^2},\label{eq:FubiniIntegrals3}\\
				\int_0^1 \frac{y^{\alpha_2}}{1-y} \log(y) \dy & = -\psi^{(1)}(\alpha_2+1) 
				= - \frac{\pi^2}{6} + \sum_{j=1}^{\alpha_2} \frac{1}{j^2}.\label{eq:FubiniIntegrals4}
			\end{align}
		\end{subequations}
	\end{lemma}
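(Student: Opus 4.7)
The plan is to prove the three identities~\eqref{eq:FubiniIntegrals2}, \eqref{eq:FubiniIntegrals3}, and~\eqref{eq:FubiniIntegrals4} separately; they are essentially independent, though~\eqref{eq:FubiniIntegrals4} will reuse~\eqref{eq:FubiniIntegrals3}.

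For~\eqref{eq:FubiniIntegrals2}, the identity is the classical Beta integral. The quickest route is to apply Lemma~\ref{lem:QuadPolynomials} in dimension $d=1$ on the $1$-simplex $[0,1]$, with barycentric coordinates $\lambda_0(y) = 1-y$, $\lambda_1(y) = y$ and multi-index $(\gamma,\alpha_2)$. Since $|[0,1]|=1$, the mean integral coincides with the ordinary one, yielding the claim directly.

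For~\eqref{eq:FubiniIntegrals3}, I would integrate by parts with $\log y$ as the differentiated factor and $y^{\alpha_2+1}/(\alpha_2+1)$ as an antiderivative of $y^{\alpha_2}$. The boundary contributions vanish, at $y=1$ because $\log 1 = 0$ and at $y=0$ because $y^{\alpha_2+1}\log y \to 0$, and the remaining integral evaluates to $-1/(\alpha_2+1)^2$.

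For~\eqref{eq:FubiniIntegrals4}, the key algebraic step is the geometric decomposition
\begin{equation*}
\frac{y^{\alpha_2}}{1-y} = \frac{1}{1-y} - \sum_{j=0}^{\alpha_2-1}y^j,
\end{equation*}
which follows from $(1-y^{\alpha_2})/(1-y) = \sum_{j=0}^{\alpha_2-1}y^j$. Multiplying by $\log y$ and integrating, the finite sum contributes $-\sum_{j=0}^{\alpha_2-1}\int_0^1 y^j\log y\dy = \sum_{j=1}^{\alpha_2}1/j^2$ by~\eqref{eq:FubiniIntegrals3}. It then remains to evaluate $\int_0^1 \log(y)/(1-y)\dy$. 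I would substitute $u=1-y$ to rewrite it as $\int_0^1 \log(1-u)/u\,\mathrm{d}u$, expand $\log(1-u) = -\sum_{k\geq 1}u^k/k$, and integrate term by term (monotone convergence applies to the positive integrand $-\log(1-u)/u$) to obtain $-\sum_{k\geq 1}1/k^2 = -\pi^2/6$ by Euler's solution of the Basel problem. Identification with $-\psi^{(1)}(\alpha_2+1)$ is then immediate from the series representation of the trigamma function.

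The only step not immediate from elementary calculus is the evaluation $\int_0^1 \log(y)/(1-y)\dy = -\pi^2/6$, which is the main (and only) real obstacle; everything else reduces to bookkeeping.
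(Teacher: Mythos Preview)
Your proofs of~\eqref{eq:FubiniIntegrals2} and~\eqref{eq:FubiniIntegrals3} match the paper's exactly: the first is the $d=1$ case of Lemma~\ref{lem:QuadPolynomials}, the second is a single integration by parts.

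For~\eqref{eq:FubiniIntegrals4} your argument is correct but genuinely different from the paper's. The paper substitutes $y=\exp(-t)$ to transform the integral into $-\int_0^\infty t\,e^{-(\alpha_2+1)t}/(1-e^{-t})\dt$, which it identifies directly as $-\psi^{(1)}(\alpha_2+1)$ via the standard integral representation of the trigamma function, and then quotes the recurrence $\psi^{(1)}(n+1)=\pi^2/6-\sum_{j=1}^{n}1/j^2$ from Abramowitz--Stegun. Your route instead peels off the polynomial part of $y^{\alpha_2}/(1-y)$ via the geometric sum, reduces to the single base integral $\int_0^1 \log(y)/(1-y)\dy$, and evaluates that as $-\pi^2/6$ by the Taylor expansion of $\log(1-u)$ and the Basel sum. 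Your approach is more self-contained---it needs only~\eqref{eq:FubiniIntegrals3} and the Basel identity, not any special-function machinery---while the paper's is shorter once one is willing to cite the polygamma integral representation. Both arrive at the same closed form, and the identification with $-\psi^{(1)}(\alpha_2+1)$ that you mention at the end is immediate from the series $\psi^{(1)}(n)=\sum_{k\geq n}1/k^2$.
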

	\begin{proof}
		The formula in~\eqref{eq:FubiniIntegrals2} is the 1D version of Lemma~\ref{lem:QuadPolynomials}.
		Integration by parts leads to~\eqref{eq:FubiniIntegrals3}, that is,
		\begin{align*}
			\int_0^1 y^{\alpha_2} \log y\dy
			&= \bigg[\frac{y^{\alpha_2+1}}{\alpha_2+1} \log y\bigg]^1_0 
			- \int_0^1 \frac{y^{\alpha_2}}{\alpha_2+1}\dy = -\frac{1}{(\alpha_2+1)^2}.
		\end{align*}
		A substitution with $y = \exp(-t)$ and the properties of the polygamma function~\cite[eq.~6.4.1 and 6.4.10]{AbramowitzStegun1964} yield
		\begin{align*}
			\int_0^1 \frac{y^{\alpha_2}\log(y)}{y-1}\dy
			&= - \int_0^\infty \frac{t \exp(-\alpha_2t)}{1-e^{-t}}\dt =  - \psi^{(1)}(\alpha_2+1)
			=\frac{\pi^2}{6}- \sum_{j=1}^{\alpha_2} \frac{1}{j^2}.\qedhere
		\end{align*}
	\end{proof}	

	Combining Lemmas~\ref{lem:integralI} and~\ref{lem:IntegralsII} allows us to evaluate $\mathcal{J}$. 
	This is realized in Algorithm~\ref{algo:ComputeJ}, which will be verified in Theorem~\ref{thm:Algo-J} below.

	\begin{figure}[ht] 
		\begin{algorithm}[H] 
			\caption{Computation of $\mathcal{J}(\alpha_1,\alpha_2,\beta_1,\beta_2)$}
			\label{algo:ComputeJ}
			\SetAlgoLined%
			\SetKwFunction{KwComputeJ}{ComputeJ}
			\KwComputeJ$(\alpha_1,\alpha_2,\beta_1,\beta_2)$
			
			\KwIn{Indices $\alpha_1,\alpha_2,\beta_1,\beta_2 \in \setN_0$}
			\KwOut{Integral mean $\mathcal{J}(\alpha_1,\alpha_2,\beta_1,\beta_2)$} \If{$\max \set{\alpha_1+\beta_1,\alpha_2+\beta_2} > \alpha_1+\alpha_2+1$}{
				\Return{$\infty$}} 
			
			Sort $((\alpha_j,\beta_j))_{j=1,2}$ such that $\beta_2 \geq \beta_1$
			
			\If{$\beta_1 = 0$} { 
				\Return{$ \tfrac{2}{\alpha_1 + 1} \tfrac{\alpha_2! (\alpha_1 - \beta_2 + 1)! }{(\alpha_1 +\alpha_2 - \beta_2 + 2)!}$} } 
			\If{$\beta_1 = 1$} { 
				\If{$\beta_2 = 1$} { 
					\Return{$- 2 \sum_{i=1}^{\alpha_2} \tfrac{1}{i^2} + \tfrac{\pi^2}{3} -2 \sum_{j=1}^{\alpha_1} \tfrac 1j \tfrac{\alpha_2! (j-1)!}{(\alpha_2 + j)!}$} } 
				\Else { 
					\Return{$\tfrac{\beta_2 - \alpha_2 - 2}{\beta_2 - 1} \KwComputeJ(\alpha_1,\alpha_2,1,\beta_2-1) + \tfrac{2}{\beta_2-1} \tfrac{(\alpha_1 - \beta_2 +1)!\alpha_2!}{(\alpha_1 - \beta_2 + \alpha_2 +2)!}$} } } 
			\Else { 
				\Return{$\tfrac{\beta_1 - \alpha_1 - 2}{\beta_1 - 1} \KwComputeJ(\alpha_1,\alpha_2,\beta_1 - 1,\beta_2) + \tfrac{2}{\beta_1-1} \tfrac{(\alpha_2 - \beta_1 +1)!(\alpha_1 - \beta_2 + 1)!}{(\alpha_2 - \beta_1 + \alpha_1 - \beta_2 + 3)!}$} } 
		\end{algorithm}
	\end{figure}
	
	\begin{theorem}[Computation of $\mathcal{J}$ in Algorithm~\ref{algo:ComputeJ}]\label{thm:Algo-J}
		Let $\alpha_1,\alpha_2,\beta_1,\beta_2 \in \setN_0$. 
		Then Algorithm~\ref{algo:ComputeJ} terminates and returns $ \mathcal{J}(\alpha_1,\alpha_2,\beta_1,\beta_2)$.
	\end{theorem}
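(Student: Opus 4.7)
The plan is to verify Algorithm~\ref{algo:ComputeJ} case by case, using three ingredients already at hand: the finiteness criterion~\eqref{eq:I-finite}, the closed forms and recursion for $\mathcal{J}^x$ from Lemma~\ref{lem:integralI}, and the three one-dimensional integrals in Lemma~\ref{lem:IntegralsII}. One additional fact I will need is the symmetry $\mathcal{J}(\alpha_1,\alpha_2,\beta_1,\beta_2) = \mathcal{J}(\alpha_2,\alpha_1,\beta_2,\beta_1)$, which is immediate from Fubini's theorem applied to~\eqref{def:J}; this both justifies the sort step and underlies the $\beta_1=1,\beta_2>1$ branch.

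First I dispatch termination and the infinite branch. Every non-base branch recurses with $\beta_1+\beta_2$ decreased by exactly one, so after at most $\beta_1+\beta_2$ calls a return is reached. The condition $\max\{\alpha_1+\beta_1,\alpha_2+\beta_2\}>\alpha_1+\alpha_2+1$ is exactly the negation of the finiteness criterion~\eqref{eq:I-finite} applied to $\mathcal{I}((0,\alpha_1,\alpha_2),(0,\beta_1,\beta_2))=\mathcal{J}(\alpha_1,\alpha_2,\beta_1,\beta_2)$, so the $\infty$ return is correct. From this point on I may assume $\alpha_1\ge\beta_2-1$ and $\alpha_2\ge\beta_1-1$, so all factorials appearing below are of non-negative integers; moreover this finiteness property is preserved by decreasing any $\beta_j$, hence along every recursive call.

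Next I verify each returned value by direct substitution into~\eqref{eq:JvsJy}. For $\beta_1=0$, inserting $\mathcal{J}^x(\alpha_1,0,y)=(1-y)^{\alpha_1+1}/(\alpha_1+1)$ from Lemma~\ref{lem:integralI} reduces the integral to $\int_0^1 y^{\alpha_2}(1-y)^{\alpha_1-\beta_2+1}\dy$, whose value is furnished by~\eqref{eq:FubiniIntegrals2}. For $\beta_1=\beta_2=1$, inserting the second line of Lemma~\ref{lem:integralI} splits $\mathcal{J}$ into a finite-sum part, again handled by~\eqref{eq:FubiniIntegrals2}, and the logarithmic integral $-2\int_0^1 y^{\alpha_2}\log(y)/(1-y)\dy$, handled by~\eqref{eq:FubiniIntegrals4}; the combination reproduces exactly the returned expression $-2\sum_{i=1}^{\alpha_2}\tfrac{1}{i^2}+\tfrac{\pi^2}{3}-2\sum_{j=1}^{\alpha_1}\tfrac{1}{j}\tfrac{\alpha_2!(j-1)!}{(\alpha_2+j)!}$. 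For the recursive branch $\beta_1>1$, plugging the third line of Lemma~\ref{lem:integralI} into~\eqref{eq:JvsJy} produces the recursive call with coefficient $(\beta_1-\alpha_1-2)/(\beta_1-1)$ and a remainder $\tfrac{2}{\beta_1-1}\int_0^1 y^{\alpha_2-\beta_1+1}(1-y)^{\alpha_1-\beta_2+1}\dy$, evaluated once more by~\eqref{eq:FubiniIntegrals2}. Finally, for $\beta_1=1$, $\beta_2>1$, the recursion of Lemma~\ref{lem:integralI} is not applicable (its denominator $\beta_1-1$ vanishes), so I apply the Fubini symmetry once, rewriting $\mathcal{J}(\alpha_1,\alpha_2,1,\beta_2)=\mathcal{J}(\alpha_2,\alpha_1,\beta_2,1)$; this places me in the just-treated $\beta_1>1$ branch with swapped arguments, and translating the resulting identity back through the same symmetry yields precisely the expression returned by the algorithm.

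The main obstacle I anticipate is purely bookkeeping: keeping careful track of the exponents that appear after multiplying $y^{\alpha_2}/(1-y)^{\beta_2}$ against each term of Lemma~\ref{lem:integralI}, and verifying that the resulting factorial arguments are non-negative along every branch (which, as noted above, follows once and for all from the preservation of the finiteness condition under the recursion). Beyond this, every case is a one-step substitution, so no further difficulty is expected.
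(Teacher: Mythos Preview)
Your proposal is correct and follows essentially the same approach as the paper's proof: the same finiteness criterion, the same case split driven by the three formulas of Lemma~\ref{lem:integralI}, the same use of Lemma~\ref{lem:IntegralsII} for the resulting one-dimensional integrals, and the same Fubini symmetry to handle the $\beta_1=1$, $\beta_2>1$ branch. The only difference is organizational---you put termination up front and are a bit more explicit about invoking symmetry in the last branch---but the argument is the same.
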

	\begin{proof}
		Due to Lemma~\ref{lem:regularity} we have~$\mathcal{J}(\alpha_1,\alpha_2,\beta_1,\beta_2)=\infty$ if and only if we have $\max \set{\alpha_1+\beta_1,\alpha_2+\beta_2} > \alpha_1+\alpha_2+1$.  
		Let us consider the remaining cases.
		Since~$\mathcal{J}(\alpha_1,\alpha_2,\beta_1,\beta_2) = \mathcal{J}(\alpha_2,\alpha_1,\beta_2,\beta_1)$ we can assume that~$\beta_2 \geq \beta_1$.
		Recall
		\begin{align*}
			\mathcal{J}(\alpha_1,\alpha_2,\beta_1,\beta_2) 
			= 2
			\int_0^1 \frac{y^{\alpha_2}}{(1-y)^{\beta_2}} \mathcal{J}^x(\alpha_1,\beta_1,y)\dy.
		\end{align*}
		The values for~$\mathcal{J}^x$ are available by Lemma~\ref{lem:integralI} and the integrals in~$y$ are computed in Lemma~\ref{lem:IntegralsII}. 
		Hence, one has
		\begin{align*}
			\mathcal{J}(\alpha_1,\alpha_2,0,\beta_2)
			&= \frac{2}{\alpha_1 + 1} \int_0^1 \frac{y^{\alpha_2}}{(1-y)^{\beta_2}} (1-y)^{\alpha_1+1}\dy \\
			& = \frac{2}{\alpha_1 + 1} \frac{\alpha_2! (\alpha_1+1 - \beta_2)! }{(\alpha_2 +\alpha_1 - \beta_2 + 2)!}.
		\end{align*}
		This verifies Algorithm~\ref{algo:ComputeJ} for $\beta_2\geq \beta_1 = 0$. 
		We continue with $\beta_2 \geq \beta_1 \geq 1$. 
		We calculate~\eqref{eq:FubiniIntegrals4} which yields
		\begin{align*}
			\mathcal{J}(\alpha_1,\alpha_2,1,1)& = - 2 \int_0^1  \frac{y^{\alpha_2}}{1-y} \log(y) \dy
			-2 \sum_{j=1}^{\alpha_1} \frac{1}{j} \int_0^1 \frac{y^{\alpha_2}}{1-y} (1-y)^j \dy\\
			&= - 2\sum_{i=1}^{\alpha_2} \frac{1}{i^2} + \frac{\pi^2}{3} -2\sum_{j=1}^{\alpha_1} \frac{1}{j}\frac{\alpha_2! (j-1)!}{(\alpha_2 + j)!}.
		\end{align*}
		This verifies  Algorithm~\ref{algo:ComputeJ} for $\beta_1=\beta_2=1$.
		
To cover the remaining cases, we exploit 	Lemma~\ref{lem:integralI} to conclude for $\beta_1 >1$ that
		\begin{align*}
		\mathcal{J}(\alpha_1,\alpha_2,\beta_1,\beta_2) 
			&= 2 \frac{\beta_1 - \alpha_1 - 2}{\beta_1 - 1} \int_0^1 \frac{y^{\alpha_2}}{(1-y)^{\beta_2}} \mathcal{J}^x(\alpha_1,\beta_1-1,y) \dy\\
			&\qquad 
			+ \frac{2}{\beta_2-1} \int_0^1 \frac{y^{\alpha_2}}{(1-y)^{\beta_2}}  \frac{(1-y)^{\alpha_1+1}}{y^{\beta_1-1}} \dy \\
			&=\frac{\beta_1 - \alpha_1 - 2}{\beta_1 - 1}  \mathcal{J}(\alpha_1,\alpha_2,\beta_1-1,\beta_2) \\
			&\qquad
			+ \frac{2}{\beta_1-1} \frac{(\alpha_2 - \beta_1 +1)!(\alpha_1 - \beta_2 + 1)!}{(\alpha_2 - \beta_1 + \alpha_1 - \beta_2 + 3)!}.
		\end{align*}
		By symmetry, we obtain for $\beta_2 > 1$ the identity
	\begin{align*}
	\mathcal{J}(\alpha_1,\alpha_2,\beta_1,\beta_2) 
	&
	=\frac{\beta_2 - \alpha_2 - 2}{\beta_2 - 1}  \mathcal{J}(\alpha_1,\alpha_2,\beta_1,\beta_2-1) \\
		&\qquad	+ \frac{2}{\beta_2-1} \frac{(\alpha_1 - \beta_2 +1)!(\alpha_2 - \beta_1 + 1)!}{(\alpha_1 - \beta_2 + \alpha_2 - \beta_1 + 3)!}.
	\end{align*}
	This yields in particular the formula for $\beta_1 = 1 < \beta_2$ and $1 < \beta_1 \leq \beta_2$ in the algorithm. 
	Since in both cases the sum  $\beta_1 + \beta_2$ is reduced by one, the recursive algorithm reaches after a finite number of steps the case $\beta_1 = 1 = \beta_2$ and terminates.
	\end{proof}

\subsubsection{General case}

In this subsection we introduce a recursive formula that successively reduces the value of~$\abs{\beta}$ or~$\alpha_0$ until the integrand~$\mathcal{I}(\alpha,\beta)$ defined in~\eqref{eq:Integral} matches one of the special cases outlined in~\eqref{eq:I-poly}, in~\eqref{eq:I-rat-special} and in Section~\ref{subsubsec:SpecialCase}. 
The reduction relies on the following recursive relations, which use the basis vectors $\hat{v}_0 = (1,0,0)$, $\hat{v}_1 = (0,1,0)$, and $\hat{v}_2 = (0,0,1)$. 

\begin{lemma}[Recursion formulae]\label{lem:rat-rec}
For any $\alpha,\beta \in \mathbb{N}^3_0$ we have the following identities for rational polynomials as defined in~\eqref{def:rat-fct}: 
\begin{enumerate}
\item \label{itm:rec-4term-a}
$\frR^{\alpha}_{\beta} = \frac12 (
\frR^{\alpha}_{\beta- \hat v_0} +
\frR^{\alpha}_{\beta-\hat v_1} +
\frR^{\alpha}_{\beta- \hat v_2})$,
\item \label{itm:rec-3term}
$\frR^{\alpha}_{\beta} =
\frR^{\alpha-\hat v_0}_{\beta-\hat v_2} -
\frR^{\alpha-\hat v_0+\hat v_1}_{\beta}$,
\item \label{itm:rec-6term}
 $ \frR^\alpha_\beta = \tfrac 12 \big( \frR^{\alpha}_{\beta-\hat v_1} + \frR^{\alpha}_{\beta- \hat v_2} + \frR^{\alpha-\hat v_0+\hat v_1}_{\beta-\hat v_1} + \frR^{\alpha-\hat v_0+\hat v_2}_{\beta-\hat v_2}\big) - \frR^{\alpha-\hat v_0+\hat v_1+\hat v_2}_{\beta}.$
\end{enumerate}
\end{lemma}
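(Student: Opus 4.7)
The plan is to verify each identity by direct algebraic manipulation, relying only on the partition of unity $\lambda_0 + \lambda_1 + \lambda_2 = 1$ and its two consequences
\begin{equation*}
(1-\lambda_0) + (1-\lambda_1) + (1-\lambda_2) = 2, \qquad 1-\lambda_k = \lambda_i + \lambda_j \text{ for } \{i,j,k\} = \{0,1,2\}.
\end{equation*}
In each case I would factor out a common $\lambda^\alpha/(1-\lambda)^\beta$ (or a shifted version thereof) so that the claim reduces to a trivial scalar identity. No regularity or integrability issue appears, since all statements are pointwise identities between rational functions; negative exponents are handled by reading $\frR^{\cdot}_{\cdot}$ as the formal monomial $\lambda^\alpha/(1-\lambda)^\beta$, so that $\beta - \hat v_j$ simply multiplies by $(1-\lambda_j)$ and $\alpha - \hat v_0$ divides by $\lambda_0$.

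For \ref{itm:rec-4term-a}, pulling $\lambda^\alpha/(1-\lambda)^\beta$ out of the right-hand side leaves $\tfrac12\bigl((1-\lambda_0) + (1-\lambda_1) + (1-\lambda_2)\bigr) = 1$, which is the first consequence above. For \ref{itm:rec-3term}, I would factor $\lambda^{\alpha-\hat v_0}/(1-\lambda)^\beta$ from the right-hand side, reducing the identity to $(1-\lambda_2) - \lambda_1 = \lambda_0$, which is the second consequence with $i=0$, $j=1$, $k=2$.

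For \ref{itm:rec-6term}, the same factoring of $\lambda^{\alpha-\hat v_0}/(1-\lambda)^\beta$ reduces the claim to the scalar identity
\begin{equation*}
\tfrac12\bigl(\lambda_0(1-\lambda_1) + \lambda_0(1-\lambda_2) + \lambda_1(1-\lambda_1) + \lambda_2(1-\lambda_2)\bigr) - \lambda_1\lambda_2 = \lambda_0.
\end{equation*}
Substituting $1-\lambda_1 = \lambda_0+\lambda_2$ and $1-\lambda_2 = \lambda_0+\lambda_1$ and expanding, the bracket becomes $2\lambda_0^2 + 2\lambda_0\lambda_1 + 2\lambda_0\lambda_2 + 2\lambda_1\lambda_2$; after the factor $\tfrac12$ and the subtraction of $\lambda_1\lambda_2$ one is left with $\lambda_0(\lambda_0+\lambda_1+\lambda_2) = \lambda_0$ by the partition of unity. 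Alternatively, \ref{itm:rec-6term} can be derived more conceptually by starting from \ref{itm:rec-4term-a} and replacing the problematic term $\frR^\alpha_{\beta-\hat v_0}$ via the symmetric counterpart of \ref{itm:rec-3term} (with the roles of $\hat v_0$ permuted), which explains why this longer recursion is exactly the one needed when $\beta_0$ cannot be decreased further.

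There is no real obstacle here: the only point requiring a small word of care is to make clear that the identities are read as identities between rational functions in the formal sense above, so that the recursion of Section~\ref{subsec:IntFormula} can legitimately apply them to reduce $\abs{\beta}$ or $\alpha_0$ irrespective of which component of $\beta$ vanishes.
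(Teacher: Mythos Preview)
Your proof is correct and follows essentially the same approach as the paper: factor out $\lambda^{\alpha}/(1-\lambda)^\beta$ (or $\lambda^{\alpha-\hat v_0}/(1-\lambda)^\beta$) and reduce each identity to a scalar relation that follows from $\lambda_0+\lambda_1+\lambda_2=1$. Your alternative remark that \ref{itm:rec-6term} can be obtained by combining \ref{itm:rec-4term-a} with a permuted version of \ref{itm:rec-3term} is a nice observation not made explicit in the paper.
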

\begin{proof}
The proof of~\ref{itm:rec-4term-a} follows from 
\begin{align*}
	(1-\lambda)^{\hat{v}_0}  + (1-\lambda)^{\hat{v}_1} + (1-\lambda)^{\hat{v}_2} = (1-\lambda_0) + (1-\lambda_1) + (1-\lambda_2) = 3 - 1 = 2. 
\end{align*}
The claim in~\ref{itm:rec-3term} follows from the identity
\begin{align*}
	\frac{\lambda^\alpha}{(1-\lambda)^\beta}
	&=
	\frac{\lambda^{\alpha-\hat{v}_0}(1-\lambda_1-\lambda_2)}{(1-\lambda)^\beta}
	=\frac{\lambda^{\alpha-\hat{v}_0}}{(1-\lambda)^{\beta-\hat{v}_1}}
	-\frac{\lambda^{\alpha-\hat{v}_0+\hat{v}_2}}{(1-\lambda)^\beta}.
\end{align*}
Finally, the proof of~\ref{itm:rec-6term} is a consequence of
\begin{align*}
	\lefteqn{\frac 12 \Big(
		\frac{\lambda^\alpha}{(1-\lambda)^{\beta-\hat{v}_1}} +
		\frac{\lambda^\alpha}{(1-\lambda)^{\beta-\hat{v}_2}}+ 
		\frac{\lambda^{\alpha-\hat{v}_0+\hat{v}_1}}{(1-\lambda)^{\beta-\hat{v}_1}}+
		\frac{\lambda^{\alpha-\hat{v}_0+\hat{v}_2}}{(1-\lambda)^{\beta-\hat{v}_2}} \Big)
		- \frac{\lambda^{\alpha-\hat{v}_0+\hat{v}_1+\hat{v}_2}}{(1-\lambda)^{\beta}} }\quad
	&
	\\
	&= \frac{\lambda^{\alpha-\hat{v}_0}}{2(1-\lambda)^\beta} \big(\lambda_0 (1-\lambda_1) + \lambda_0 (1-\lambda_2) + \lambda_1 (1-\lambda_1) + \lambda_2(1-\lambda_2) - 2\lambda_1\lambda_2\big)
	\\
	&= \frac{\lambda^{\alpha-\hat{v}_0}}{2(1-\lambda)^\beta} 2\lambda_0=
	\frac{\lambda^{\alpha}}{(1-\lambda)^{\beta}}. \qedhere
\end{align*}
\end{proof}

Those recursion relations allow us to derive recursion relations for $\mathcal{I}(\alpha,\beta)$.
\begin{proposition}[Reduction]
	\label{pro:intrational-new}
	Let $T\subset \RR^2$ be a triangle and let $\alpha,\beta \in \setN_0^3$ be multi-indices such that $\mathcal{I}(\alpha,\beta)<\infty$.
	\begin{enumerate}
		\item \label{itm:intrational-symmetry}
		For any permutation $\sigma$ of $\set{0,1,2}$ we have
		\begin{align*}
			\mathcal{I}(\alpha,\beta) &= \mathcal{I}(\sigma(\alpha), \sigma(\beta)).
		\end{align*}
		\item \label{itm:intrational-decrease-denominator}
		If $\beta_0,\beta_1,\beta_2 \geq 1$, one has
		\begin{align*}
			\mathcal{I}(\alpha,\beta) &= \tfrac 12 \big(
			\mathcal{I}(\alpha,\beta-\hat{v}_0) +
			\mathcal{I}(\alpha,\beta-\hat{v}_1) +
			\mathcal{I}(\alpha,\beta-\hat{v}_2)\big).
		\end{align*}
		\item 
		 \label{itm:intrational-decrease-alpha0}
			If $\beta_0 = 0$ and $\alpha_0 \geq 1$, and let $\alpha_j+\beta_j < \abs{\alpha}+1$ for some $j=1,2$, then 
		\begin{align*}
			\mathcal{I}(\alpha,\beta) & = \mathcal{I}(\alpha-\hat{v}_0,\beta-\hat{v}_{3-j}) - \mathcal{I}(\alpha-\hat{v}_0+\hat{v}_j,\beta).
		\end{align*}
		\item \label{itm:intrational-reduce-beta-or-improve-alpha}
		If $\beta_0 = 0$, $\beta_1,\beta_2 \geq 1$ and $\alpha_0> 0$, one has
		\begin{align*}
			\mathcal{I}(\alpha,\beta)
			&= \tfrac 12 \big(
			\mathcal{I}(\alpha,\beta-\hat{v}_1) +
			\mathcal{I}(\alpha,\beta-\hat{v}_2) \big)-\mathcal{I}(\alpha-\hat{v}_0+\hat{v}_1+\hat{v}_2,\beta) \\
			&\quad  + \tfrac12 \big( 
			\mathcal{I}(\alpha-\hat{v}_0+\hat{v}_1,\beta-\hat{v}_1)+
			\mathcal{I}(\alpha-\hat{v}_0+\hat{v}_2,\beta-\hat{v}_2) \big).
		\end{align*}   
	\end{enumerate}
\end{proposition}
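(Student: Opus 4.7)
The plan is to derive each of the four identities by integrating the corresponding pointwise algebraic identity from Lemma~\ref{lem:rat-rec} over $T$; the only subtlety is to verify that every integral appearing on the right-hand side is finite, using the criterion $\norm{\alpha+\beta}_\infty \leq \abs{\alpha}+1$ from Lemma~\ref{lem:regularity} (equivalently~\eqref{eq:I-finite}), so that termwise integration is legitimate.

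For~\ref{itm:intrational-symmetry} I would invoke an affine change of variables. By the affine invariance noted just before~\eqref{eq:defT}, the value of $\mathcal{I}(\alpha,\beta)$ does not depend on the chosen triangle, so I may pull back by the unique affine map sending the vertices $v_0,v_1,v_2$ to $v_{\sigma(0)},v_{\sigma(1)},v_{\sigma(2)}$. Under this map the barycentric coordinates are permuted by $\sigma$, which turns $\frR^\alpha_\beta$ into $\frR^{\sigma(\alpha)}_{\sigma(\beta)}$, yielding the stated equality of mean integrals.

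For~\ref{itm:intrational-decrease-denominator} I integrate Lemma~\ref{lem:rat-rec}\ref{itm:rec-4term-a} termwise over $T$; each $\mathcal{I}(\alpha,\beta-\hat v_j)$ is finite by~\eqref{eq:I-finite}, since decreasing $\beta_j$ by one can only decrease $\norm{\alpha+\beta}_\infty$ while leaving $\abs{\alpha}$ unchanged. For~\ref{itm:intrational-decrease-alpha0} I would treat the case $j=1$ by integrating Lemma~\ref{lem:rat-rec}\ref{itm:rec-3term}, and obtain $j=2$ by first applying~\ref{itm:intrational-symmetry} to swap coordinates $1$ and $2$. The nontrivial integrability check is for $\mathcal{I}(\alpha-\hat v_0+\hat v_j,\beta)$: the only component of the index sum that could exceed $\abs{\alpha-\hat v_0+\hat v_j}+1=\abs{\alpha}+1$ is $\alpha_j+1+\beta_j$, and bounding this by $\abs{\alpha}+1$ is precisely the extra hypothesis $\alpha_j+\beta_j<\abs{\alpha}+1$; finiteness of $\mathcal{I}(\alpha-\hat v_0,\beta-\hat v_{3-j})$ is immediate from the global bound $\norm{\alpha+\beta}_\infty\leq\abs{\alpha}+1$.

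Finally, for~\ref{itm:intrational-reduce-beta-or-improve-alpha} I integrate Lemma~\ref{lem:rat-rec}\ref{itm:rec-6term}. The two $\mathcal{I}(\alpha,\beta-\hat v_k)$ terms are finite as in~\ref{itm:intrational-decrease-denominator}; the mixed terms $\mathcal{I}(\alpha-\hat v_0+\hat v_k,\beta-\hat v_k)$, $k=1,2$, have $\abs{\alpha}$ unchanged and each relevant component of the index sum bounded by the original $\norm{\alpha+\beta}_\infty\leq\abs{\alpha}+1$; and for $\mathcal{I}(\alpha-\hat v_0+\hat v_1+\hat v_2,\beta)$ both $\abs{\alpha}$ and the potentially largest components grow by $1$, so integrability is again preserved. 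The main obstacle I anticipate is purely bookkeeping — tracking these component-by-component integrability checks so that termwise integration is justified — rather than any conceptual difficulty, since each identity collapses to its algebraic counterpart from Lemma~\ref{lem:rat-rec} once integrability is secured.
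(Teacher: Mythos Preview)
Your proposal is correct and follows essentially the same approach as the paper: integrate the pointwise identities of Lemma~\ref{lem:rat-rec} termwise after verifying finiteness of each summand via the criterion~\eqref{eq:I-finite}. One small caveat: in part~\ref{itm:intrational-decrease-alpha0} the finiteness of $\mathcal{I}(\alpha-\hat v_0,\beta-\hat v_{3-j})$ is not quite ``immediate from the global bound'' --- the $j$-th component $\alpha_j+\beta_j$ must be bounded by $\abs{\alpha-\hat v_0}+1=\abs{\alpha}$, which again requires the extra hypothesis $\alpha_j+\beta_j<\abs{\alpha}+1$ --- but this is already at hand, so the argument goes through.
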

\begin{proof}
	Recall that $\mathcal{I}(\alpha,\beta)< \infty$ holds if and only if $\norm{\alpha + \beta}_{\infty} \leq \abs{\alpha} + 1$, see Lemma~\ref{lem:regularity}. 
The proof of~\ref{itm:intrational-symmetry} follows by symmetry.

To prove~\ref{itm:intrational-decrease-denominator} note that with $\beta_0,\beta_1,\beta_2 \geq 1$ it follows that for any $j \in \{0,1,2\}$ one has $\norm{\alpha + \beta - \hat v_j}_{\infty} \leq  	\norm{\alpha + \beta}_{\infty} \leq \abs{\alpha} + 1.$
Consequently, all terms are finite. 
Then the claim follows from Lemma~\ref{lem:rat-rec}~\ref{itm:rec-4term-a}. 

By symmetry, we assume that the index satisfying the assumptions of~\ref{itm:intrational-decrease-alpha0} is $j=2$, that is, $\alpha_2 + \beta_2 < \abs{\alpha}+1$. 
We have $(\alpha-\hat{v}_0+\hat{v}_2)_2 + \beta_2 = \alpha_2 +\beta_2 + 1 \leq \abs{\alpha}+1$. 
Thus, Lemma~\ref{lem:regularity} shows that the integrals $\mathcal{I}(\alpha-\hat{v}_0,\beta-\hat{v}_1)$ and $ \mathcal{I}(\alpha-\hat{v}_0+\hat{v}_2,\beta)$ are finite.
Then the identity follows from Lemma~\ref{lem:rat-rec}~\ref{itm:rec-3term}. 
 
Similarly, under the conditions on $\alpha, \beta$ as specified in~\ref{itm:intrational-reduce-beta-or-improve-alpha} one can show that all terms are finite and the identity follows from Lemma~\ref{lem:rat-rec}~\ref{itm:rec-6term}. 
\end{proof}

\begin{figure}[h!]
	\begin{algorithm}[H]
		\caption{Computation of $\mathcal{I}(\alpha,\beta)$}
		\label{algo:ComputeI}
		\SetAlgoLined
		\SetKwFunction{KwComputeI}{ComputeI}
		\KwComputeI$(\alpha,\beta)$
		
		\KwIn{Multi-indices $\alpha,\beta \in \setN_0^3$ with $\alpha=(\alpha_0,\alpha_1,\alpha_2)$ and $\beta=(\beta_0,\beta_1,\beta_2)$}
		\KwOut{Integral mean $\mathcal{I}(\alpha,\beta)$}
		\If{ $\max\set{\alpha_0+\beta_0,\alpha_1+\beta_1,\alpha_2+\beta_2} > \alpha_0+\alpha_1+\alpha_2+1$} {
			\Return{$\infty$}
		}
		Sort $((\alpha_j,\beta_j))_{j=1,2}$ such that $\beta_0 \leq \beta_1 \leq \beta_2$
		
		\If{$\beta_0=\beta_1 = 0$}{
			\Return{$2\frac{\alpha_0! \alpha_1!\alpha_2!}{(\abs{\alpha}-\beta_2+2)!} \frac{(\alpha_0+\alpha_1+1-\beta_2)!}{(\alpha_0+\alpha_1+1)!}$}
		}
		\If(\tcp*[f]{$\beta \geq (1,1,1)$}){$\beta_0 \geq 1$}{
			\Return{$\tfrac{1}{2} \sum_{j=0}^2 \KwComputeI(\alpha,\beta - \hat{v}_j)$}
		}
		
			\If(\tcp*[f]{$\beta_0 = 0, \beta_2\geq \beta_1 \geq 1$}){$\alpha_0 =0$}{
				\Return{$\KwComputeJ(\alpha_1,\alpha_2,\beta_1,\beta_2)$}	
			}
		
		\If(\tcp*[f]{$\beta_0 = 0, \beta_2\geq \beta_1 \geq 1$, $\alpha_0\geq 1$}){$\alpha_1 + \beta_1 < |\alpha| + 1$}
		{    
			\Return{$\mathtt{ComputeI}(\alpha-\hat{v}_0,\beta-\hat{v}_2) - \mathtt{ComputeI}(\alpha-\hat{v}_0+\hat{v}_1,\beta)$}    
		}
		\If{$\alpha_2 + \beta_2 < |\alpha| + 1$}
		{
			\Return{$\mathtt{ComputeI}(\alpha-\hat{v}_0,\beta-\hat{v}_1) - \mathtt{ComputeI}(\alpha-\hat{v}_0+\hat{v}_2,\beta)$}    
		}
		\Else
		{
			\Return{$\tfrac{1}{2} \sum_{j=1}^2 \big( \mathtt{ComputeI}(\alpha,\beta - \hat{v}_j) + \mathtt{ComputeI}(\alpha-\hat{v}_0+\hat{v}_j,\beta-\hat{v}_j)\big)$ \\ $
				\qquad \qquad -\, \mathtt{ComputeI}(\alpha-\hat{v}_0+\hat{v}_1+\hat{v}_2,\beta)$  }
		}
	
	\end{algorithm}
\end{figure}

\begin{theorem}[Computation of~$\mathcal{I}$ in Algorithm~\ref{algo:ComputeI}]%
	\label{thm:Algo}
	Let $\alpha,\beta \in \setN^3_0$. 
	Then Algorithm~\ref{algo:ComputeI} terminates and returns~$\mathcal{I}(\alpha,\beta)$.
\end{theorem}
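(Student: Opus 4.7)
The plan is strong induction on a termination measure, with correctness of each branch of Algorithm~\ref{algo:ComputeI} supplied by a previously established identity.

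First I would handle the preliminary steps: the infinity test at the top of the algorithm is justified by the characterization of finiteness in Lemma~\ref{lem:regularity} (equivalently~\eqref{eq:I-finite}), and the sort step that places the smallest $\beta_j$ in position~$0$ is justified by the symmetry~\ref{itm:intrational-symmetry} of Proposition~\ref{pro:intrational-new}. After the sort we have $\beta_0\leq\beta_1\leq\beta_2$, and every subsequent branch is then identified by conditions on $\beta_0,\beta_1$, on $\alpha_0$, and on the saturation of the inequalities $\alpha_j+\beta_j\leq|\alpha|+1$.

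Second, I would treat the two base-case branches. When $\beta_0=\beta_1=0$, the sorted tuple satisfies $\beta=(0,0,\beta_2)$, and Lemma~\ref{lem:QuadRat-1} yields exactly the closed-form expression returned by the algorithm. When $\beta_0=0$, $\beta_1,\beta_2\geq 1$ and $\alpha_0=0$, the integral reduces by~\eqref{def:J} to $\mathcal{J}(\alpha_1,\alpha_2,\beta_1,\beta_2)$, whose exact evaluation by the subroutine $\mathtt{ComputeJ}$ was established in Theorem~\ref{thm:Algo-J}.

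Third, in each of the remaining branches I would verify that the hypotheses of the relevant part of Proposition~\ref{pro:intrational-new} are met and then read off the identity that the algorithm implements. The $\beta_0\geq 1$ branch yields $\beta_0,\beta_1,\beta_2\geq 1$ by the sort, so part~\ref{itm:intrational-decrease-denominator} applies and produces exactly the returned formula. The branches guarded by $\alpha_1+\beta_1<|\alpha|+1$ and $\alpha_2+\beta_2<|\alpha|+1$ invoke part~\ref{itm:intrational-decrease-alpha0} with $j=1$ and $j=2$, respectively. In the final else-branch, the failure of both previous tests together with the finiteness $\|\alpha+\beta\|_\infty\leq|\alpha|+1$ forces $\alpha_1+\beta_1=\alpha_2+\beta_2=|\alpha|+1$; combined with $\beta_0=0$, $\beta_1,\beta_2\geq 1$ and $\alpha_0\geq 1$ (from the negation of the earlier branches), the hypotheses of part~\ref{itm:intrational-reduce-beta-or-improve-alpha} are met, and a rearrangement of its right-hand side matches the returned linear combination.

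The crux, and the step I expect to be the main obstacle, is termination, since several recursive calls do not reduce $|\beta|$. To handle it I would introduce the lexicographic measure
\[
\mu(\alpha,\beta)\coloneqq \bigl(|\beta|,\ \alpha_{j^\star}\bigr)\in\setN_0\times\setN_0,
\]
where $j^\star$ is any index with $\beta_{j^\star}=\min_k\beta_k$ (the choice is immaterial thanks to part~\ref{itm:intrational-symmetry}), and verify by inspection that $\mu$ strictly decreases lexicographically in every recursive call. In the $\beta_0\geq 1$ branch, $|\beta|$ drops by one in all three calls. In the two part~\ref{itm:intrational-decrease-alpha0} branches, one call reduces $|\beta|$ by one, while the other keeps $|\beta|$ but lowers by one the $\alpha$-component sitting over the unchanged zero coordinate of~$\beta$. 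In the final else-branch, the four calls of the form $\mathcal{I}(\alpha,\beta-\hat v_j)$ and $\mathcal{I}(\alpha-\hat v_0+\hat v_j,\beta-\hat v_j)$ strictly reduce $|\beta|$, whereas the single call $\mathcal{I}(\alpha-\hat v_0+\hat v_1+\hat v_2,\beta)$ leaves $|\beta|$ unchanged but decreases $\alpha_0=\alpha_{j^\star}$ by one. Since $(\setN_0^2,<_{\mathrm{lex}})$ is well-founded, strong induction on~$\mu$ closes the proof.
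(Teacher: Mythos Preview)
Your proposal is correct and follows essentially the same route as the paper: a branch-by-branch verification that each return value matches the corresponding identity (Lemma~\ref{lem:QuadRat-1}, Theorem~\ref{thm:Algo-J}, and the four parts of Proposition~\ref{pro:intrational-new}), followed by a termination argument based on the pair $(|\beta|,\alpha_0)$. Your lexicographic measure $\mu=(|\beta|,\alpha_{j^\star})$ is simply a cleaner formalization of what the paper states informally as ``$\alpha_0$ and $|\beta|$ are non-increasing in the iteration, and in each step $\alpha_0$ or $|\beta|$ is reduced''; your observation that in every $|\beta|$-preserving call the minimum of $\beta$ is uniquely at position~$0$ (so $\alpha_{j^\star}=\alpha_0$ is well-defined and drops by one) makes the well-foundedness explicit.
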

\begin{proof}
	The first condition determines whether the integral is finite as described in Lemma~\ref{lem:regularity}. 
	The sorting of the $(\alpha_j,\beta_j)$ guarantees that~$\beta_0 \leq \beta_1 \leq \beta_2$ using Proposition~\ref{pro:intrational-new}~\ref{itm:intrational-symmetry}. 
	In case $\beta_0 = \beta_1  = 0$ we use the explicit formula as in Lemma~\ref{lem:QuadRat-1}. 
	
	 If $\beta \geq (1,1,1)$, the routine exploits Proposition~\ref{pro:intrational-new}~\ref{itm:intrational-decrease-denominator} to reduce some~$\beta_j$. 
	 After that the remaining cases have $\beta_0=0$ and $1 \leq \beta_1 \leq \beta_2$. 
	 If also $\alpha_0=0$, then $\mathcal{I}(\alpha,\beta) = \mathcal{J}(\alpha_1,\alpha_2,\beta_1,\beta_2)$ and we can use Algorithm~\ref{algo:ComputeJ} to compute its value. 
	In the remaining cases we have additionally to the conditions on $\beta$ that $\alpha_0 \geq 1$.
	If either $\alpha_1 + \beta_1 < \abs{\alpha} + 1$ or $\alpha_2 + \beta_2 < \abs{\alpha} + 1$, then the three term recursion in Proposition~\ref{pro:intrational-new}~\ref{itm:intrational-decrease-alpha0} can be applied to reduce $\alpha_0$, and on one term also one of the entries of $\beta$. 
	Now, the only remaining case is $\beta_0 = 0$, $1 \leq \beta_1 \leq \beta_2$, $\alpha_0 \geq 0$, and $\alpha_1 + \beta_1 = \alpha_2 + \beta_2 = \abs{\alpha}+ 1$. 
	In this case Proposition~\ref{pro:intrational-new}~\ref{itm:intrational-reduce-beta-or-improve-alpha} is applicable, and in each term  $\alpha_0$ or $\abs{\beta}$ is reduced. 
	Since $\alpha_0$ and $\abs{\beta}$ are non-increasing in the iteration, and in each step $\alpha_0$ or $\abs{\beta}$ is reduced, the algorithm terminates after finitely many iterations. 
\end{proof}

\begin{remark}[Performance]
	\label{rem:many-calls}%
	Algorithm~\ref{algo:ComputeI} is recursive and is not optimized in terms of performance. 
	It can be improved by storing the values of $\mathcal{I}(\alpha,\beta)$ that are computed in the iteration and using symmetries. 
	Note that in the implementation of finite elements as in Section~\ref{sec:implementation},  Algorithm~\ref{algo:ComputeI} is solely used in the offline phase. 
\end{remark}

\section{Implementation}\label{sec:implementation}
In this section we describe our implementation of the finite elements discussed in Section~\ref{sec:FEspace} with \Matlab~\cite{Matlab.2024}, which is available at~\cite{Code}. Our code uses the implementation of a uniform mesh refinement routine from \cite{Bartels15}.
The implementation can easily be transferred to other programming languages. 
We discuss general finite elements using rational bubble functions in Section~\ref{sec:impl-general}. 
Then we address the special cases of the singular \Zienkiewicz element in Section~\ref{sec:implZienk} and the lowest-order \Guzman{}--Neilan element in Section~\ref{sec:implGN}.

Before we discuss our implementation, we explain the computation of the Jacobian $D\lambda = (\nabla \lambda)^\top$ needed for the computation of gradients in~\eqref{eq:trafogradx} and of Hessians in~\eqref{eq:chainrule-2}. 
Let $T = [v_0,v_1,v_2]$ be a simplex spanned by vertices $v_0,v_1,v_2 \in \setR^2$ and let $T^\textup{ref} = [v^\textup{ref}_0, v^\textup{ref}_1, v^\textup{ref}_2]$ denote the reference simplex with vertices $v^\textup{ref}_0 = (0,0)^\top$, $v^\textup{ref}_1= (1,0)^\top$, $v^\textup{ref}_2 = (0,1)^\top$ in $\mathbb{R}^2$. 
The affine transformation mapping $v^\textup{ref}_i \mapsto v_i$ for all $i= 0,1,2$ is denoted by $F \colon T^\textup{ref} \to T$. 
Its Jacobian matrix is given by 
\begin{equation*}
	D F = \begin{pmatrix}
		v_1-v_0,&v_2-v_0
	\end{pmatrix} = (\nabla F)^\top\in \mathbb{R}^{2\times 2}.
\end{equation*}
The barycentric coordinates of $T^\textup{ref}$ for any $x = (x_1,x_2)^\top \in \setR^2$ are given by 
\begin{align}\label{def:lambdaref}
	\lambda^\textup{ref}(x) = \begin{pmatrix}
		{\lambda}^\textup{ref}_0(x) \\
		{\lambda}^\textup{ref}_1(x)\\
		{\lambda}^\textup{ref}_2(x)	
	\end{pmatrix}
	= \begin{pmatrix}
		1 - x_1 - x_2\\
		x_1 \\ 
		x_2
	\end{pmatrix}.
\end{align}
The barycentric coordinates $\lambda = (\lambda_0,\lambda_1,\lambda_2)^\top$ on $T$ satisfy $\lambda \circ F = \lambda^\textup{ref}$. 
This identity and the chain rule lead with Jacobian matrices $D \lambda^\textup{ref}, D \lambda \in \setR^{3\times 2}$ to 
\begin{equation*}
 D \lambda^\textup{ref} = D (\lambda\circ F) = D \lambda\,  D F.
\end{equation*}
Hence, we can compute, as for example in Figure~\ref{fig:MatlabRoutineZ}, line~\ref{line:grad_lam}, the matrix
\begin{equation}\label{eq:nablaLam}
	G_T \coloneqq D \lambda =  D \lambda^\textup{ref}\, (D F)^{-\top} = \begin{pmatrix}
		-1 & -1\\
		1 & 0\\
		0 & 1 
	\end{pmatrix} 
	(D F)^{-1} \in \setR^{3 \times 2}.
\end{equation}
\begin{remark}[Alternative]
Lemma 3.10 in~\cite{Bartels15} states the alternative formula
  \begin{equation*}
    D \lambda = 
    \begin{pmatrix}
      1 & 1 & 1\\
      v_0 & v_1 & v_2 
    \end{pmatrix}^{-1} \begin{pmatrix}
      0 & 0 \\ 1 & 0 \\ 0&1 
    \end{pmatrix}.
 \end{equation*}
\end{remark}
\subsection{Finite elements using rational functions}\label{sec:impl-general}
This subsection provides a general framework to implement finite element schemes that involve rational functions of the form~\eqref{def:rat-fct}.

\subsubsection{The class `Rational Function'}\label{subsec:ClassRationalFunction}
	The local bases of the finite elements under consideration are linear combinations of rational functions. 
	That is with scalars $\gamma^{(j)} \in \mathbb{R}$ and multi-indices $\alpha^{(j)},\beta^{(j)} \in \setN_0^3$ for $j = 1,\dots,m \in \setN$ expressed in barycentric coordinates they have the form
\begin{equation}\label{eq:RatFctClass}
  \hat{b} = \sum_{j=1}^m \gamma^{(j)} \hat{\frR}^{\alpha^{(j)}}_{\beta^{(j)}}.
\end{equation}
Given a triangle~$T$ with barycentric coordinates $\lambda\colon T \to \widehat{T}$ we obtain the rational function $b = \hat{b} \circ \lambda$ in standard coordinates. 
In \Matlab we store such functions as tensors, that is, we identify $b$ with $\Matb \in \mathbb{R}^{3\times 3\times m}$, via
\begin{equation*}
  b \sim \Matb \in \mathbb{R}^{3\times 3\times m}\quad\text{with}\quad\Matb[:,:,j] =
  \begin{pmatrix}
    \alpha^{(j)}_0& \alpha^{(j)}_1 & \alpha^{(j)}_2\\
    \beta^{(j)}_0& \beta^{(j)}_1 & \beta^{(j)}_2\\
    \gamma^{(j)}& 0 & 0
  \end{pmatrix}\ \text{for }j=1,\dots,m.
\end{equation*}
Based on the results in Section~\ref{sec:rat-poly} we  implement several routines for rational functions $b \sim \Matb \in \mathbb{R}^{3\times 3\times m}$ and $c \sim \Matc \in \mathbb{R}^{3\times 3\times n}$, for $m,n \in \mathbb{N}$, including the following: 
\begin{itemize}
\item \lstinline|Multiply_Rationals(b,c)| returns the rational function $b\, c$ using~\eqref{eq:MultiplyRat},
\item \lstinline|Integrate_Rational(b)| returns $\dashint_T b \dx$ using Algorithm~\ref{algo:ComputeI},
\item \lstinline|Diff_Rational_lam(b)| returns the gradient $\nabla_{\lambda} \hat{b}$ using~\eqref{eq:diffRat}, 
\item \lstinline|Diff_Rational_x(DF,nabla_b)| and \lstinline|Diff_Rational_y(DF,nabla_b)| with $\mathtt{nabla\_b} = \nabla_\lambda \hat{b}$ and $\mathtt{DF} = DF$ return the derivatives $\partial_{x_1} b$ and $\partial_{x_2} b$ using~\eqref{eq:trafogradx}, \eqref{eq:nablaLam}.
\end{itemize}

\subsubsection{Compute local system matrices}\label{subsec:ComputeLocSysMat}
Let $(b_\ell)_{\ell = 1}^L$ with $L\in \setN$ denote a set of rational basis functions of a local finite element space $\mathcal{P}(T)$ on a simplex $T\in \tria$. 
Our routine computes local system matrices such as the mass matrix $M$ and the stiffness matrix $A$, defined by
\begin{align*}
\begin{aligned}
M &= (M_{\ell,k})_{\ell,k=1}^L&&\text{with } M_{\ell,k} \coloneqq \int_T b_\ell\, b_k\dx,\\
A &= (A_{\ell,k})_{\ell,k=1}^L&&\text{with } A_{\ell,k} \coloneqq \int_T \nabla b_\ell\cdot \nabla b_k\dx. 
\end{aligned}
\end{align*}
We can compute these local matrices directly by the routines mentioned in Section~\ref{subsec:ClassRationalFunction}, but this approach appears to be rather slow. 
Instead, the integrals are best calculated on a reference simplex using the routines in Section~\ref{subsec:ClassRationalFunction} and then transformed to~$T$ as exemplified in Section~\ref{sec:implZienk} and~\ref{sec:implGN} below. 
Notice that the reference simplex is $\widehat{T} \subset \RR^3$ defined in~\eqref{eq:defvhat}, that is, we compute quantities with respect to the barycentric coordinates. 
This distinguishes our ansatz from typical finite element implementations as for example in~\cite[Chap.~2.8]{Braess07}, where the reference element is $T^\textup{ref} \subset \RR^2$. 

\subsubsection{Evaluation of the right-hand side}\label{subsubsec:Rhs}
Let $f\colon \Omega \to \mathbb{R}$ be a function usually representing the right-hand side of a partial differential equation, and let $T \in \tria$ be a simplex.
Let $(\phi_j)_{j=1}^J \subset \mathcal{P}_r(T)$ denote a basis of the polynomial space $\mathcal{P}_r(T)$ for some $r\in \setN_0$ and recall the local basis functions $(b_\ell)_{\ell=1}^L$ mentioned in Section~\ref{subsec:ComputeLocSysMat}.  
Let~$\phi_j = \hat{\phi}_j \circ \lambda$ and $b_\ell = \hat{b}_\ell \circ \lambda$ and we define the affinely transformed basis on the reference triangle by $\phi^\textup{ref}_j = \hat{\phi}_j \circ \lambda^\textup{ref}$ and $b^\textup{ref}_\ell = \hat{b}_\ell \circ \lambda^\textup{ref}$  for all $j=1,\dots,J$ and $\ell = 1,\dots,L$, with $\lambda^{\textup{ref}}$ as in \eqref{def:lambdaref}. 
As described in the Section~\ref{subsec:ComputeLocSysMat} we can compute the matrix 
\begin{equation*}
\widehat{C} \in \mathbb{R}^{L\times J} \quad\text{with entries}\quad \widehat{C}_{\ell,j} = \dashint_{T^\textup{ref}} \phi^\textup{ref}_j\, b^\textup{ref}_\ell\dx = \dashint_T \phi_j b_\ell \dx.  
\end{equation*}
Suppose we have an approximation $\mathcal{I} f = \sum_{j=1}^J f_j \phi_j$ of $f$ with coefficient vector $\mathtt{f} = (f_j)_{j=1}^J\in \mathbb{R}^J$, obtained for example by means of nodal interpolation. 
Then for all $\ell = 1,\dots,L$, one has 
\begin{align*}
\dashint_T f\, b_\ell \dx \approx \dashint_T \mathcal{I} f\,  b_\ell \dx = \sum_{j=1}^J f_j \dashint_T \phi_j \, b_\ell\dx = (\widehat{C}\,  \mathtt{f})_\ell.
\end{align*}
In particular, the matrix vector multiplication $\widehat{C}\,  \mathtt{f}$ allows us to evaluation the right-hand side in our finite element scheme on each simplex $T\in \tria$. 
In our implementations we precompute $\widehat{C}^\top \eqqcolon \mathtt{bhat}$ and obtain $\mathtt{f}$ by nodal interpolation, cf.~line~\ref{line:f_loc1} in Figure~\ref{fig:MatlabRoutineZ}. 

\subsubsection{Change of basis}\label{sec:impl-gen-basischange}
Rather than using the basis $(b_\ell)_{\ell = 1}^L$ from the previous subsections, we want to use a basis $(c_\ell)_{\ell= 1}^L \subset \mathcal{P}(T) \coloneqq \linearspan\lbrace b_\ell\colon \ell = 1,\dots,L\rbrace$, that corresponds to the degrees of freedom $(\psi_\ell)_{\ell = 1}^L \subset \mathcal{P}(T)^*$ of the finite element in the sense that
\begin{align*}
\psi_\ell(c_m) = \delta_{\ell,m}\qquad\text{for all }\ell,m = 1,\dots, L.
\end{align*}
The representation of the basis functions $(c_\ell)_{\ell=1}^L$ might be 
independent of the underlying triangle $T\in \tria$, as for example in the case of Lagrange finite elements or, more generally, for affinely equivalent elements, see~\cite[Def.~3.4.1]{BrennerScott08}. 
However, since the elements discussed in this paper are not affinely equivalent, we compute the representation of the shape functions on the fly. 
For this purpose we define the generalized Vandermonde matrix
\begin{equation*}
\Vander = (\Vander_{\ell,k})_{\ell,k=1}^L \in \mathbb{R}^{L  \times L}\quad\text{with}\quad \Vander_{\ell,k} \coloneqq \psi_\ell(b_k)\quad\text{for all }\ell,k=1,\dots,L.
\end{equation*}
Then, the coefficients $y = (y_\ell)_{\ell=1}^L\in \mathbb{R}^L$ in the expansion $c_m = \sum_{\ell=1}^L y_\ell b_\ell$, for each $m = 1, \dots L$, are determined by
\begin{equation*}
\Vander y = e_m, 
\end{equation*}
where $e_m$ denotes the $m$-th canonical unit vector in $\mathbb{R}^L$. 
In other words, the coefficients of each basis function are the columns of the inverse matrix $\Vander^{-1}$.
We obtain the system matrices with respect to the basis $(c_\ell)_{\ell=1}^L$ by multiplying the matrices computed in Section~\ref{subsec:ComputeLocSysMat} by the matrix $\Vander^{-1}$ and its transpose from the left and right, e.g., for the mass matrix 
\begin{equation*}
\widetilde{M} = (\widetilde{M}_{\ell,k})_{\ell,k=1}^L \coloneqq \Vander^{-\top} M \Vander^{-1}\;\;\text{satisfies } \;\;\widetilde{M}_{\ell,k} = \int_T c_\ell \, c_k \dx\text{ for }\ell,k=1,\dots,L.
\end{equation*}
Similarly, we modify the computation of the right-hand side in Section~\ref{subsubsec:Rhs} by multiplying the matrix $\widehat{C}$ by $\Vander^{-\top}$, i.e., we replace $\widehat{C} \mathtt{f}$ by $\Vander^{-\top} \widehat{C} \mathtt{f}$.
\subsubsection{Assembly of the global system}\label{subsubsec:AssGlobSys}
With the local integrals computed as in the previous subsections, one can assemble the global system matrices and right-hand side. 
With global basis functions $(B_j)_{j=1}^N$ and $N\coloneqq \dim V_h$ and with $a(\bigcdot,\bigcdot) \colon V_h \times V_h \to \setR$ representing the bilinear form of the corresponding system matrix one has 
\begin{equation*}
\mathtt{A} = (\mathtt{A}_{j,k})_{j,k = 1}^N \in \mathbb{R}^{N \times N}\qquad\text{with } \mathtt{A}_{j,k} = a(B_j, B_k). 
\end{equation*}
The basis $(B_j)_{j=1}^N$ is chosen such that $\Psi_k(B_j) = \delta_{k,j}$ for all $j,k= 1,\dots, N$ with global degrees of freedom $\Psi_k$. 
We initialize $A$ as a $N\times N$ matrix with zero entries. 
Then we loop over all simplices $T\in \tria$. 
For each simplex $T\in \tria$ we compute the corresponding local system matrices as described above and add the values for the local degrees of freedom to the corresponding ones in $A$. 
Similarly, we proceed with the right-hand side, which leads to the vector $\mathtt{b} = (\mathtt{b}_j)_{j=1}^N \in \mathbb{R}^N$ with entries $\mathtt{b}_j = \int_\Omega \mathcal{I} f B_j\dx$ for all $j=1,\dots,N$.

\begin{remark}[Improvement]
In Figure~\ref{fig:MatlabRoutineZ} and~\ref{fig:MatlabGN} we use a slow assembling routine that updates the sparse Matrix $\mathtt{A}$ in each loop for the sake of a simpler presentation. 
For a more efficient implementation it is recommended to build the system matrix directly from an array of local system matrices and suitable index sets as described for example in~\cite{FunkenPraetoriusWissgott11}.
\end{remark}
%
\subsection{Singular \Zienkiewicz element}\label{sec:implZienk}

The previous section contains an approach to implement general finite elements that use polynomial and rational basis functions with degrees of freedom including point evaluations, evaluations of gradients, and normal derivatives on edges. 
However, the resulting routines are in general rather slow. 
We address this drawback by using the routines discussed in Section~\ref{subsec:ClassRationalFunction} to precompute certain quantities that are independent of the specific triangle $T \in \tria$.
This subsection discusses the resulting implementation for the singular \Zienkiewicz element introduced in Section~\ref{sec:Zienkiewicz} for the biharmonic problem:  One seeks $u_h \in V_h \cap H^2_0(\Omega)$ with $V_h \coloneqq \Zsing$  defined in Lemma~\ref{lem:sing-Zien} such that 
\begin{equation}\label{eq:biharmonic}
\int_\Omega \Delta u_h\, \Delta v_h \dx = \int_\Omega f v_h\dx\qquad\text{for all }v_h \in V_h \cap H^2_0(\Omega).
\end{equation}
Let us recall the local basis of $\Zsing$ on a simplex $T=[v_0,v_1,v_2]\in \tria$ with faces $\edges(T)= \{f_0,f_1,f_2\}$. 
The local basis, see~\eqref{eq:DefZienkiwiecz}, contains the basis of  $\mathcal{P}_2(T)$
\begin{equation*}
	\begin{aligned}
		b_1 \coloneqq \lambda_2^2, &&  b_2 \coloneqq  \lambda_1\lambda_2, && b_3 \coloneqq  \lambda_1^2, && b_4 \coloneqq  \lambda_0\lambda_2, && b_5 \coloneqq  \lambda_0\lambda_1, && b_6 \coloneqq  \lambda_0^2.
	\end{aligned}
\end{equation*}
The remaining basis functions of the reduced cubic Hermite element are
\begin{equation*}
\begin{aligned}
b_7 \coloneqq  \lambda_0^2 \lambda_1 - \lambda_0 \lambda^2_1, && b_8 \coloneqq   \lambda_1^2 \lambda_2 - \lambda_1 \lambda_2^2, && b_9 \coloneqq  \lambda_2^2\lambda_0 - \lambda_2\lambda^2_0,
\end{aligned}
\end{equation*}
and the rational bubble basis functions 
\begin{equation*}
\begin{aligned}
b_{10} \coloneqq  B_{f_0}, && b_{11} \coloneqq  B_{f_1}, && b_{12} \coloneqq  B_{f_2}.
\end{aligned}
\end{equation*}
The local degrees of freedom are 
\begin{equation}\label{eq:dofsZienkewicz}
\begin{aligned}
\psi_{1+j}(p) &= p(v_{j}),\   
\psi_{3+j}(p) = \partial_x p(v_{j}),  \ 
 \psi_{7+j}(p) = \partial_y p(v_{j}), \\
  \psi_{10+j}(p) &= \nabla p(\textup{mid}(\edge_{j})) \cdot \nu_{\edge_{j}},
\end{aligned}
\end{equation}
for all $p \in \Zsing(T)$ and $j=0,1,2$. 

In our implementation the vertices $\{v_1,\dots,v_m\} = \mathcal{N}$ and edges $\lbrace f_1,\dots,f_n\rbrace = \edges$ in the underlying triangulation $\tria$ are sorted according to their occurrence in the matrix $\mathtt{c4n} \in \mathbb{R}^{m\times 2}$ and $\mathtt{n4s} \in \mathbb{R}^{n\times 2}$. 
Those are computed in separate routines and contain the coordinates of each vertex and the vertex numbers of each edge as rows, respectively. 
The global degrees of freedom are
\begin{equation}
\begin{aligned}
&\Psi_{j}(w_h) = w_h(v_j),\quad  \Psi_{m+j}(w_h) = \partial_x w_h(v_j), \quad  \Psi_{2m+j}(w_h) = \partial_y w_h(v_j),\\
&\Psi_{3m+k}(w_h) = \nabla w_h(\textup{mid}(\edge_{k})) \cdot \nu_{\edge_{k}},
\end{aligned}
\end{equation}
 for all $w_h \in V_h$, $j=1,\dots,m$, and $k=1,\dots,n$. 
Let $(B_r)_{r=1}^{N} \subset V_h$ with dimension $N = 3m+n$ denote the corresponding basis functions in the sense of Section~\ref{subsubsec:AssGlobSys}. 
Moreover, we define the system matrix $\mathtt{A} = (\mathtt{A}_{r,s})_{r,s =1}^N$ for the biharmonic problem~\eqref{eq:biharmonic} and the right-hand side $\mathtt{b} = (\mathtt{b}_r)_{r=1}^N \in \mathbb{R}^N$ by
\begin{equation}\label{eq:GlobalSysMatZien}
\mathtt{A}_{r,s} = \int_\Omega \Delta B_r \, \Delta B_s \dx\quad\text{and}\quad \mathtt{b}_r\approx \int_\Omega f B_r \dx \quad \text{ for all  } r,s=1,\dots,N.
\end{equation}
While the computation of the right-hand side $\mathtt{b}$ has been discussed in Section~\ref{subsubsec:Rhs}, the computation of the matrix $\mathtt{A}$ is explained in the remainder of this subsection.
\subsubsection{Local stiffness matrix}\label{sec:impl-zien-Aloc}
We start with the computation of the matrix $A_T \in \mathbb{R}^{12\times 12}$ that represents the (local) bilinear form. Its entries read 
\begin{align}\label{eq:bilin-T}
(A_{T})_{r,s} \coloneqq \int_T \Delta b_r\, \Delta b_s \dx\qquad\text{for }r,s=1,\dots,12. 
\end{align}
The identity in~\eqref{eq:trafogradx} yields for rational polynomials $b\colon T \to \mathbb{R}$ with $b = \hat{b}\circ \lambda$
and with the $i$-th row $\partial_{x_i} \lambda$ of $\nabla \lambda \in \mathbb{R}^{2\times 3}$ that
\begin{equation}\label{eq:Formulapartb}
\partial_{x_i} b  = (\partial_{x_i} \lambda)\, (\nabla_\lambda \hat{b}) \circ  \lambda.
\end{equation}
Applying this formula twice leads to    
$\partial_{x_i}^2 b = (\partial_{x_i} \lambda)\, (\nabla_\lambda^2 \hat{b}) \circ \lambda\,  (\partial_{x_i} \lambda)^\top$.
  In particular, we obtain the representation
\begin{align}\label{eq:chainrule-Lapl}
\Delta b  = \partial^2_{x_1} b + \partial^2_{x_2} b = \sum_{i = 1,2}(\partial_{x_i} \lambda)\, (\nabla_\lambda^2 \hat{b}) \circ \lambda\,  (\partial_{x_i} \lambda)^\top.
\end{align}
Let us rewrite the matrix $A_T$ in~\eqref{eq:bilin-T}. For $v,w\in \setR^3$ and $H\in \setR^{3 \times 3}$ the  Frobenius product satisfies $v^\top H v = H : (v \otimes v)$ and thus
 \begin{equation}\label{eq:proofTempasdsafda}
  	v^\top H v + w^\top H w = H : (v \otimes v + w \otimes w ) = H : \left( (v\mid w) \begin{pmatrix}v^\top\\ w^\top \end{pmatrix} \right). 
 \end{equation}
Recall the matrix $G_T \coloneqq D \lambda \in \setR^{3\times 2}$ computed in~\eqref{eq:nablaLam}. 
Using~\eqref{eq:proofTempasdsafda} we can rewrite~\eqref{eq:chainrule-Lapl} as $\Delta b = (\nabla_\lambda^2 \hat{b} \circ \lambda) : G_T G_T^\top$.
Hence, for indices $r,s=1,\dots,12$ the integrand in~\eqref{eq:bilin-T} is given by 
\begin{equation}\label{eq:Prop1}
 \Delta b_r \Delta b_s 
 =	\big((\nabla_\lambda^2 \hat{b}_r \circ \lambda) : G_T G_T^\top\big) \big((\nabla_\lambda^2 \hat{b}_s \circ \lambda) : G_T G_T^\top\big).
\end{equation}
Let $A,B,C \in \setR^{3 \times 3}$ be matrices and define the tensors $[AB]$ with entries $[AB]_{i,j,k,l} = A_{i,j}  B_{k,l}$ and  $[CC]$ with entries $[CC]_{i,j,k,l} = C_{i,j}  C_{k,l}$. 
One has the identity
\begin{equation}\label{eq:Prop2}
\begin{aligned}
	(A:C) (B:C) &= \Big(\sum_{i,j} A_{i,j} C_{i,j}\Big) \Big(\sum_{k,l} B_{k,l} C_{k,l}\Big) \\
	&= \sum_{i,j,k,l} A_{i,j}  B_{k,l} C_{i,j} C_{k,l} = [AB]: [CC].
\end{aligned}
\end{equation}
 We define the tensors $\widehat{A}\in \setR^{12 \times 12 \times 3 \times 3 \times 3 \times 3}$ and $Q_T\in \setR^{3 \times 3 \times 3 \times 3}$ with entries 
\begin{align*}
\widehat{A}(r,s,i,j,k,l) 
&\coloneqq \dashint_T \left((\partial_{\lambda_i} \partial_{\lambda_j} \hat{b}_r) (
\partial_{\lambda_k} \partial_{\lambda_l} \hat{b}_s)\right) \circ \lambda  \dx, \\
{Q}_T(i,j,k,l) 
&\coloneqq (G_T G_T^\top)_{i,j}   (G_T G_T^\top)_{k,l}.  
\end{align*}
Notice that $\widehat{A}$, which is denoted by $\mathtt{Ahat}$ in Figure~\ref{fig:MatlabRoutineZ}, is independent of $T$ and thus is precomputed in our routine. 
Combining~\eqref{eq:Prop1} and~\eqref{eq:Prop2} leads to the identity
\begin{align}
	(A_T)_{r,s} = \int_{T} \Delta b_r \Delta b_s \dx = \abs{T}\, \widehat{A}(r,s,:,:,:,:) : {Q}_T.
\end{align} 
The \Matlab{} routine in Figure~\ref{fig:MatlabRoutineZ} performs this computation in lines~\ref{line:assAstart}--\ref{line:assAend}.
\begin{figure}[ht] 
\begin{lstlisting}[language=Matlab,escapechar=|]
for elem = 1 : nrElems
  nodes = n4e(elem,:);   % nodes of triangle
  coords = c4n(nodes,:); % coordinates of the three nodes
  sides = s4e(elem,:);   % sides of triangle
  DF = [coords(2,:)-coords(1,:);coords(3,:)-coords(1,:)]';
  grad_lam = [-1,-1;1,0;0,1]/DF;  % = [-1,-1;1,0;0,1]*inv(DF)|\label{line:grad_lam}|
  area = abs(det(DF))/2;  % area of current simplex T
  %% Compute local system matrix A_T %% 
  Q_T = tensorprod(grad_lam*grad_lam',grad_lam*grad_lam'); |\label{line:assAstart}|
  A_T = area*tensorprod(Ahat,Q_T,[3 4 5 6],[1 2 3 4]); |\label{line:assAend}|
  %% Local basis evaluation at dofs %% 
  Tgv = tensorprod(grad_lam,That_gv,1,3);	|\label{line:Tgv}|
  Tge = squeeze(pagemtimes(reshape(normal4s(sides,:)',[1,2,3]),permute(tensorprod(grad_lam,That_ge,1,3),[1,3,2])))'; |\label{line:Tge}|
  V = [That_v;squeeze(Tgv(1,:,:));squeeze(Tgv(2,:,:));Tge];|\label{line:Psi}|
  Basis4elem(:,:,elem) = inv(V);							|\label{line:Psiinv}|
  %% Update global matrix A and rhs b %%
  l2g_Dof = [nodes,nrNodes+nodes,2*nrNodes+nodes,3*nrNodes+sides]; |\label{line:defDof}|
  A(l2g_Dof,l2g_Dof) = A(l2g_Dof,l2g_Dof) + Basis4elem(:,:,elem)'*A_T*Basis4elem(:,:,elem);                                    
  b(l2g_Dof) = b(l2g_Dof) + area*Basis4elem(:,:,elem)'*bhat'*f(BaryCoords*coords);|\label{line:f_loc1}|
end 
\end{lstlisting}
\caption{\Matlab loop over all $T\in \tria$ to obtain the matrix $\mathtt{A}$ and right-hand side $\mathtt{b}$ in~\eqref{eq:GlobalSysMatZien}.}\label{fig:MatlabRoutineZ}
\end{figure}
\subsubsection{Local basis evaluation at dofs}\label{sec:impl-Zien-locbas}
In lines~\ref{line:Tgv}--\ref{line:Psiinv} in Figure~\ref{fig:MatlabRoutineZ} our routine computes a basis $(c_j)_{j=1}^{12}\subset Z(T)$ 
such that with degrees of freedom $\psi_j$ as in~\eqref{eq:dofsZienkewicz} we have
\begin{equation*}
\psi_j(c_k) = \delta_{j,k}\qquad\text{for all }j,k=1,\dots,12.
\end{equation*}
As described in Section~\ref{sec:impl-gen-basischange} for this purpose we invert the generalized Vandermonde matrix $\Vander = (\Vander_{j,k})_{j,k=1}^{12}\in \mathbb{R}^{12 \times 12}$ with $\Vander_{j,k} \coloneqq \psi_j(b_k)$ for all $j,k=1,\dots,12$. 
Its entries are computed as follows.

The point evaluations $\psi_1,\psi_2,\psi_3$ are independent of the underlying triangle $T$ and are stored in the precomputed matrix 
\begin{equation*}
\mathtt{That\_v} = (\Vander_{j,k})_{j=1,2,3}^{k=1,\dots,12} \in \mathbb{R}^{3\times 12}.
\end{equation*}
The dofs $\psi_4,\dots,\psi_9$ correspond to evaluations of the gradients at the vertices $v_0,v_1,v_2$ of $T$. 
We define the precomputed tensor $\mathtt{That\_gv} \in \mathbb{R}^{3\times 12 \times 3}$ with 
\begin{equation*}
\mathtt{That\_gv}(i,j,:) = (\nabla_\lambda \hat{b}_j)\circ \lambda(v_{i-1})\qquad\text{for all }i=1,2,3\text{ and }j=1,\dots,12.
\end{equation*}
Using~\eqref{eq:Formulapartb} this tensor allows us to compute the tensor $\mathtt{Tgv} \in \mathbb{R}^{2\times 3 \times 12}$ with entries $\mathtt{Tgv}(:,i,j) = \nabla b_j(v_{i-1})$ for all $i=1,2,3$ and $j=1,\dots,12$, cf.~line~\ref{line:Tgv} in Figure~\ref{fig:MatlabRoutineZ}.

It remains to consider the evaluations $\psi_{10},\psi_{11},\psi_{12}$ of the normal derivatives at the midpoints of edges. 
The gradients at the face midpoints are computed similarly as $\mathtt{Tgv}$. 
We then multiply these gradients with precomputed normal vectors. This leads to the matrix $\mathtt{Tge} \in \mathbb{R}^{3\times 12}$ with entries $\mathtt{Tge}(i,j) = \nabla b_j(\textup{mid}(\edge_{i-1}))\cdot \nu_{\edge_{i-1}}$ for all $i=1,2,3$ and $j=1,\dots,12$, cf.~line~\ref{line:Tge} in Figure~\ref{fig:MatlabRoutineZ}.
Combining these matrices leads to the matrix $\mathtt{V} = \Vander \in \mathbb{R}^{12\times 12}$ in line~\ref{line:Psi} in Figure~\ref{fig:MatlabRoutineZ}.
\subsubsection{Reduced \Zienkiewicz element} \label{subsec:RedZienkiewicz}
As discussed in Section~\ref{sec:Zienkiewicz} it is possible to reduce the number of basis functions, leading to the reduced singular \Zienkiewicz element in~\eqref{eq:ReducedZienkiewicz}. 
To obtain this reduced element, we have to modify our local basis functions $b_1,\dots,b_9$ by subtracting suitable scaled rational bubble functions $b_{10},b_{11},b_{12}$ such that the resulting functions $b \in \Zsing(T)$ satisfy $\nabla b|_{\edge_j} \cdot \nu_j \in \mathcal{P}_1(\edge_j)$ for all $j = 0,1,2$. This property is satisfied for the quadratic basis functions $b_1,\dots, b_6 \in \mathcal{P}_2(T)$ without any modification. 
For $b\in \lbrace b_7,b_8,b_9\rbrace$ we use the ansatz 
\begin{equation*}
\bar{b} \coloneqq b - \sum_{j=0}^2 \gamma_j B_{\edge_j}\qquad\text{with coefficients }\gamma_0,\gamma_1,\gamma_2 \in \mathbb{R}\text{ to be determined}.
\end{equation*} 
According to Lemma~\ref{lem:RatBubbles} the function $\nabla \bar{b}|_{\edge_j}$ is a quadratic function on the edge $\edge_j = [v_{j+1},v_{j+2}]$, for $j =  0,1,2$. 
Hence, the function $\bar{b}$ satisfies $\nabla \bar{b}|_{\edge_j} \cdot \nu_j \in \mathcal{P}_1(\edge_j)$ if and only if 
\begin{equation*}
\nabla \bar{b}(\textup{mid}(\edge_j))\cdot \nu_{\edge_j}
 =\tfrac12 \left( \nabla \bar{b}(v_{j+1})
  + \nabla \bar{b}(v_{j+2})\right)\cdot \nu_{\edge_j}.
\end{equation*}
Due to the properties of the rational bubble functions stated in Lemma~\ref{lem:RatBubbles} and by the definition of $\bar{b}$ this is equivalent to
$$\nabla \left( b(\textup{mid}(\edge_j)) - \gamma_j B_{\edge_j}(\textup{mid}(\edge_j))\right)\cdot \nu_{\edge_j}
 = \tfrac12 \big( \nabla b(v_{j+1}) + \nabla b(v_{j+2})\big)\cdot \nu_{\edge_j}.$$ 
This determines the coefficients to be 
\begin{equation*}
\gamma_j = \frac{\nabla b(\textup{mid}(\edge_j))\cdot \nu_{\edge_j} -\tfrac12  \big( \nabla b(v_{j+1}) + \nabla b(v_{j+2})\big)\cdot \nu_{\edge_j}}{B_{\edge_j}(\textup{mid}(\edge_j))\cdot \nu_{\edge_j} }\qquad \text{for }j=0,1,2. 
\end{equation*}
The values on the right-hand side have been computed and stored in the matrix $\mathtt{V}\in \mathbb{R}^{12\times 12}$, see Section~\ref{sec:impl-Zien-locbas}, in the sense that for all $k=1,\dots,9$ and $j=0,1,2$
\begin{align*}
\nabla b_k(\textup{mid}(\edge_j))\cdot \nu_{\edge_j} &= \mathtt{V}(10+j,k),\\
\nabla b_k(v_j) &= (\mathtt{V}(4+j,k),\mathtt{V}(7+j,k))^\top,\\
B_{\edge_j}(\textup{mid}(\edge_j))\cdot \nu_{\edge_j} &= b_{10+j}(\textup{mid}(\edge_j))\cdot \nu_{\edge_j} = \mathtt{V}(10+j,10+j).
\end{align*}
This allows us to compute the values $\gamma_j$ as in line~\ref{line:rho0}--\ref{line:rho2} in Figure~\ref{fig:MatlabRoutineZreduced}. 
In this way we obtain the basis functions $\bar{b}_1,\dots,\bar{b}_9$ of $\Zred(T)$ with $\bar{b}_k = b_k$ for $k=1,\dots,6$. 
Since the degrees of freedom $\psi_1,\dots,\psi_9$ do not see the correction by the rational bubble functions according to Lemma~\ref{lem:RatBubbles}, that is, $\psi_k(B_{\edge_j}) = 0$ for $k=1,\dots,9$ and $j=0,1,2$, the Vandermonde matrix remains unchanged in the sense that $\psi_k(\bar{b}_\ell) = \psi_k(b_\ell)$ for all $k,\ell=1,\dots,9$. 
This allows us to compute the coefficients $(y_\ell)_{\ell=1}^{12} \in \mathbb{R}^{12}$ of the functions $\bar{c}_k = \sum_{\ell=1}^{12} y_\ell b_\ell \in \Zred(T)$ with $\psi_{\ell}(\bar{c}_k) = \delta_{\ell,k}$ for all $k,\ell=1,\dots,9$ as illustrated in lines~\ref{line:basisRed0}--\ref{line:basisRed} in Figure~\ref{fig:MatlabRoutineZreduced}.  

\begin{figure}
\begin{lstlisting}[language=Matlab,escapechar=|]
gamma0 = (V(10,7:9)-normal4s(sides(1),:)/2*(V([5,8],7:9)+V([6,9],7:9)))/V(10,10);|\label{line:rho0}|
gamma1 = (V(11,7:9)-normal4s(sides(2),:)/2*(V([6,9],7:9)+V([4,7],7:9)))/V(11,11);
gamma2 = (V(12,7:9)-normal4s(sides(3),:)/2*(V([4,7],7:9)+V([5,8],7:9)))/V(12,12);|\label{line:rho2}|
RedBasis = [eye(9);zeros(3,6),-[gamma0;gamma1;gamma2]];|\label{line:basisRed0}|
Basis4elem(:,:,elem) = RedBasis/V(1:9,1:9);|\label{line:basisRed}|
l2g_Dof = [nodes,nrNodes+nodes,2*nrNodes+nodes];
\end{lstlisting}
\caption{Modification replacing lines~\ref{line:Psiinv}--\ref{line:defDof} in Figure~\ref{fig:MatlabRoutineZ} to obtain the reduced singular \Zienkiewicz element.}\label{fig:MatlabRoutineZreduced}
\end{figure}

\subsection{\Guzman{}--Neilan element}\label{sec:implGN}

In this subsection we present an implementation of the 2D \Guzman{}--Neilan element for the Stokes problem.
More precisely, we consider the \Guzman{}--Neilan element as presented in Section~\ref{sec:GNelement} with global spaces $\Vspace$ and $\Qspace$ defined in Lemma~\ref{lem:GN} with given underlying triangulation $\tria$. The discretized Stokes problems seeks $(u_h,\pi_h) \in (\Vspace \cap H^1_0(\Omega)^2)\times \Qspace \cap L^2_0(\Omega)$ such that 
		\begin{align}\label{eq:StokesGN}
		\begin{aligned}
			\int_\Omega \nabla u_h : \nabla v_h \dx -
			 \int_{\Omega} \pi_h \divergence v_h \dx  &= \int_\Omega f v_h\dx&\quad&\text{for all }v_h \in \Vspace \cap H^1_0(\Omega)^2,\\
			\int_\Omega q_h \divergence  u_h &= 0 &\quad&\text{for all }q_h \in \Qspace  \cap L^2_0(\Omega).
		\end{aligned}
	\end{align}
To define our local basis functions for any simplex $T=[v_0,v_1,v_2]$, recall the functions $b_{f_i}$ and $B_{f_i}$ with $i = 0,1,2$ defined in~\eqref{eq:DefRatBubble}, and set $e_1 = (1,0)^\top$, $e_2 = (0,1)^\top$.
The local velocity space $V(T)$ is spanned by the basis functions
\begin{equation}\label{eq:DefBasisGN}
	\begin{aligned}
		b_1 &\coloneqq \lambda_0 e_1, &  
		b_2 &\coloneqq  \lambda_1 e_1, & 
		b_3 &\coloneqq  \lambda_2 e_1 , \\ 
		b_4 &\coloneqq \lambda_0 e_2, & 
		b_5 &\coloneqq  \lambda_1 e_2, & 
		b_6 &\coloneqq  \lambda_2 e_2, \\
		b_7 &\coloneqq \bfcurl (\lambda_0^2 \lambda_1 - \lambda_1^2 \lambda_0), &  
		b_8 &\coloneqq \bfcurl (\lambda_1^2 \lambda_2 - \lambda_2^2 \lambda_1), &  
	    b_9 &\coloneqq \bfcurl (\lambda_2^2 \lambda_0 - \lambda_0^2 \lambda_2), \\
	    b_{10} &\coloneqq \bfcurl (B_{f_{0}}), &  
	    b_{11} &\coloneqq \bfcurl (B_{f_{1}}), &  
	    b_{12} &\coloneqq \bfcurl (B_{f_{2}}).
	\end{aligned}
\end{equation}
Let us additionally define  
\begin{equation*}
	\begin{aligned}
		\rho_1 &\coloneqq \lambda_0^2 \lambda_1 - \lambda_1^2 \lambda_0,\quad &  
		\rho_2 &\coloneqq  \lambda_1^2 \lambda_2 - \lambda_2^2 \lambda_1, \quad&  
		\rho_3 &\coloneqq \lambda_2^2 \lambda_0 - \lambda_0^2 \lambda_2, \\
		\rho_{4} &\coloneqq B_{f_{0}}, &  
		\rho_{5} &\coloneqq  B_{f_{1}}, &  
		\rho_{6} &\coloneqq B_{f_{2}}.
	\end{aligned}
\end{equation*}
The local basis of $Q(T)$ consists of the single constant function 
\begin{align*}
	q_1 = 1.
\end{align*}
The local degrees of freedom are for all $p \in V(T)$ and $j = 0,1, 2$ defined by
\begin{equation}\label{eq:dofsGuzmanNeilan}
\begin{aligned}
\psi_{1+j}(v) &= (p(v_j))_1,& \psi_{4+j}(p)& = (p(v_j))_2, \\ 
\psi_{7+j}(p) &= p(\textup{mid}(\edge_j)) \cdot \nu_{\edge_j},& \psi_{10+j}(p)& = p(\textup{mid}(\edge_j)) \cdot \tau_{\edge_j}.
\end{aligned}
\end{equation}
In our implementation the vertices $\{v_1,\dots,v_m\} = \mathcal{N}$ and edges $\lbrace f_1,\dots,f_n\rbrace = \edges$ in the underlying triangulation $\tria$ are sorted according to their occurrence in the matrix $\mathtt{c4n} \in \mathbb{R}^{m\times 2}$ and $\mathtt{n4s} \in \mathbb{R}^{n\times 2}$, which contains the coordinates of each vertex and the vertex numbers of each edge, respectively. Then the global degrees of freedom are, for all $w_h \in \Vspace$, $j=1,\dots,m$, and $k=1,\dots,n$,
\begin{equation}
\begin{aligned}
\Psi_{j}(w_h) &= (w_h(v_j))_1,& \Psi_{m+j}(w_h)& = (w_h(v_j))_2,\\
\Psi_{2m+k}(w_h) &= w_h(\textup{mid}(\edge_k)) \cdot \nu_{\edge_k},& \Psi_{2m+n+k}(w_h)& = w_h(\textup{mid}(\edge_k)) \cdot \tau_{\edge_k}.
\end{aligned}
\end{equation}
The corresponding global basis in the sense of Section~\ref{subsubsec:AssGlobSys} is denoted by $(B_r)_{r=1}^{N} \subset V_h$ with dimension $N = 2m+2n$. 
Moreover, in our implementation the simplices $\lbrace T_1,\dots,T_p\rbrace = \tria$ are sorted according to their occurrence in the matrix $\mathtt{n4e} \in \mathbb{R}^{p\times 3}$ whose $j$-th row contains the indices of the vertices of $T_j$. 
Then $(q_j)_{j=1}^p$ with $q_j \coloneqq \indicator_{T_j}$ for all $j=1,\dots,p$ is a basis of $\Qspace$. The remainder of this subsection focuses on the computation of the local contribution of the matrices $\mathtt{A} = (\mathtt{A}_{r,s})_{r,s =1}^N \in \mathbb{R}^{N\times N}$ and $\mathtt{B} = (\mathtt{B}_{r,j})_{r=1,\dots,N}^{j=1,\dots,p} \in \mathbb{R}^{N\times p}$ as well as of the right-hand side $\mathtt{b} = (\mathtt{b}_r)_{r=1}^N \in \mathbb{R}^N$ such that for all $r,s=1,\dots,N$ and $j=1,\dots,p$
\begin{equation}\label{eq:GlobalSysMatGN}
\mathtt{A}_{r,s} = \int_\Omega \nabla B_r : \nabla B_s \dx,\quad 
\mathtt{B}_{r,j} = \int_\Omega  q_j \divergence· B_r\dx,\quad\mathtt{b}_r\approx \int_\Omega f B_r \dx.
\end{equation}
\begin{figure}
\begin{lstlisting}[language=Matlab,escapechar=|]
for elem = 1 : nrElems
  nodes = n4e(elem,:);   % nodes of triangle
  coords = c4n(nodes,:); % coordinates of the three nodes
  sides = s4e(elem,:);   % sides of triangle
  DF = [coords(2,:)-coords(1,:);coords(3,:)-coords(1,:)]';
  grad_lam = [-1,-1;1,0;0,1]/DF;  %grad_lam = [-1,-1;1,0;0,1]*inv(DF)
  area = abs(det(DF))/2;  % area equals volume of current simplex
  %% Compute local stiffness matrix A_T (P_T, R_T ,M_T) %%
  GG = grad_lam*grad_lam';
  A_T(1:3,1:3) = area*GG;  A_T(4:6,4:6) = area*GG;  %P_T |\label{line:P_T}|
  Q_T = tensorprod(GG,GG);
  A_T(7:12,7:12) = area*tensorprod(Rhat,Q_T,[3 4 5 6],[1 2 3 4]); %R_T|\label{line:R_T}|
  W_T = tensorprod(R*grad_lam',GG);
  A_T(1:6,7:12) = area*tensorprod(Mhat,W_T,[3 4 5 6],[1 2 3 4]); %M_T |\label{line:M_T}|
  A_T(7:12,1:6) = A_T(1:6,7:12)';
  B_T = area*grad_lam(:); |\label{line:B_T}|
  %% Local basis evaluation at dofs %% 
  normals = normal4s(sides,:);   %normal vector for each side 
  tangents = tangent4s(sides,:); %tangent vector for each side 
  V_left = [eye(6);normals(1,:)*val_mid1';normals(2,:)*val_mid2';normals(3,:)*val_mid3';tangents(1,:)*val_mid1';tangents(2,:)*val_mid2';tangents(3,:)*val_mid3'];   |\label{line:Psi_left}| 
  Tgv = tensorprod(grad_lam,That_gv,1,3);  |\label{line:TgvGN}| 
  Tge = tensorprod(grad_lam,That_ge,1,3); |\label{line:TgvAuxGN}| 
  for i=1:3
    Tge_nt([i,i+3],:) = [normals(i,:);tangents(i,:)]*R*squeeze(Tge(:,i,:));  end|\label{line:Tge_nt}|
  V_right = [squeeze(Tgv(2,:,:));-squeeze(Tgv(1,:,:));Tge_nt];
  V = [V_left,V_right]; |\label{line:Vander}|
  Basis4elem(:,:,elem) = inv(V);|\label{line:invVanderGN}|
  %% Update global matrix A and rhs b %%
  l2g_Dof = [nodes,nrNodes+nodes,2*nrNodes+sides,2*nrNodes+nrSides+sides];|\label{line:invVanderGN2}|
  A(l2g_Dof,l2g_Dof) = A(l2g_Dof,l2g_Dof) + Basis4elem(:,:,elem)'*A_T*Basis4elem(:,:,elem);
  B(l2g_Dof,elem) = Basis4elem(1:6,:,elem)'*B_T;
  f_loc = f(BaryCoords*coords); |\label{line:f_loc2}|
  b2 = tensorprod(grad_lam,bhat2,1,2); |\label{line:b2}|
  b_T1 = [bhat1'*f_loc(:,1);bhat1'*f_loc(:,2)]; 
  b_T2 =squeeze(b2(2,:,:))'*f_loc(:,1)-squeeze(b2(1,:,:))'*f_loc(:,2);|\label{line:bT2}|
  b(l2g_Dof) = area*b(l2g_Dof)+Basis4elem(:,:,elem)'*[b_T1;b_T2]; |\label{line:f_loc2End}|                           
end 
\end{lstlisting}
\caption{Computation of the local contributions to assemble the matrices~$ \mathtt{A}$, $\mathtt{B}$, and $\mathtt{b}$ in~\eqref{eq:GlobalSysMatGN}.}\label{fig:MatlabGN}
\end{figure}%
\subsubsection{Local stiffness matrix}
First, we describe the computation of the local stiffness matrix $A_T \in \setR^{12 \times 12}$. Given a simplex $T=[v_0,v_1,v_2]\in \tria$ its entries read 
\begin{align*}
	(A_T)_{r,s}  = \int_T \nabla b_r : \nabla b_s \dx\qquad\text{for all }r,s=1,\dots,12.
\end{align*}
We divide this matrix into blocks $R_T, M_T, P_T \in \setR^{6 \times 6}$ in the sense that 
\begin{align*}
	A_T = \begin{pmatrix}
		P_T & M_T \\
		M_T^\top & R_T 
		\end{pmatrix}. 
\end{align*}
This means $P_T$ contains the entries where both basis functions are affine polynomials, $R_T$ contains the entries where both basis functions are curls of a rational functions, and $M_T$ contains the mixed ones. 

Let us start with computing $P_T$. 
Its entries are 
\begin{align*}
(P_T)_{r,s} \coloneqq  \int_T \nabla b_r : \nabla b_s \dx\qquad\text{for }r,s=1,\dots,6.
\end{align*}
For all $j,k=1,2,3$ we have the identity
\begin{equation*}
P_{j,k} = \int_T \nabla \lambda_{j-1} \cdot \nabla \lambda_{k-1} \dx = \int_T \nabla b_j : \nabla b_k\dx = \int_T \nabla b_{3+j} : \nabla b_{3+k}\dx.
\end{equation*}
Moreover, the off-diagonal blocks equal zero, that is, $P_{j,3+k} = 0 = P_{3+j,k}$ for all $j,k=1,2,3$.  
Hence, with the matrix $G_T = D \lambda\in \mathbb{R}^{3\times 2}$ computed as in~\eqref{eq:nablaLam} we have, cf.~Figure~\ref{fig:MatlabGN}, line~\ref{line:P_T}, that
\begin{equation*}
P_T =  \begin{pmatrix}
|T|\, G_T G_T^\top & 0 \\
0 & |T|\, G_T G_T^\top
\end{pmatrix}.
\end{equation*}

Let us now consider $R_T \in \setR^{6 \times 6}$. 
Recall that in 2D the curl is given by
\begin{align*}
\bfcurl f = R \nabla f \qquad \text{with rotation matrix } R = \begin{pmatrix} 
0 & 1 \\
-1 & 0	
\end{pmatrix}.
\end{align*}
Hence, we obtain for all $r,s=1,\dots,6$ that
\begin{align*}
	(R_T)_{r,s} 
	&\coloneqq \int_T \nabla b_{r+6} : \nabla b_{s+6} \dx = \int_T (\nabla \bfcurl\rho_r) : ( \nabla \bfcurl \rho_{s}) \dx\\
	& = \int_T (\nabla R \nabla \rho_r) : (\nabla R \nabla  \rho_{s}) \dx 
	= \int_T  \nabla^2 \rho_r : \nabla^2 \rho_{s} \dx.
\end{align*} 
With $G_T = D\lambda$ and~\eqref{eq:chainrule-2} the integrand equals 
\begin{align*}
	\nabla^2 \rho_r : \nabla^2 \rho_{s}
	= \left( G_T^\top (\nabla_{\lambda}^2 \hat{\rho}_r)\circ \lambda\,  G_T \right) : \left( G_T^\top (\nabla_{\lambda}^2 \hat{\rho}_s) \circ \lambda\,   G_T \right). 
\end{align*}
We define the tensors $\widehat{R} \in\setR^{6 \times 6 \times 3 \times 3 \times 3 \times 3}$ and ${Q}_T\in\setR^{ 3 \times 3 \times 3 \times 3} $ as
\begin{align*}
	\widehat{R}(r,s,i,j,k,l) 
	&\coloneqq \dashint_T \left((\partial_{\lambda_i} \partial_{\lambda_k} \hat{\rho}_r) (
	\partial_{\lambda_j} \partial_{\lambda_l} \hat{\rho}_s)\right)\circ \lambda  \dx, \\
	{Q}_T(i,j,k,l) 
	&\coloneqq (G_T G_T^\top)_{i,j}   (G_T G_T^\top)_{k,l}.
\end{align*}
Component-wise computation reveals the identity $(R_T)_{r,s} =  \abs{T}\, \widehat{R}(r,s,:,:,:,:) : {Q}_T$.  
In our implementation the element-independent tensor $\widehat{R} = \mathtt{Rhat}$ is precomputed and the computation of $R_T = \mathtt{R\_T}$ is shown in Figure~\ref{fig:MatlabGN}, line~\ref{line:R_T}.

It remains to compute the submatrix $M_T$ with entries, for $r,s = 1,\ldots, 6$, 
\begin{align*}
	(M_T)_{r,s} &=  \int_T \nabla b_r : \nabla b_{s+6}\dx  = 
	\int_T \nabla b_r : \nabla R \nabla \rho_{s} \dx 	= \int_T \nabla b_r : R \nabla^2 \rho_{s} \dx.
\end{align*}
With the identities~\eqref{eq:trafogradx}, \eqref{eq:chainrule-2}, and $G_T = D \lambda $ the integrand has the form 
\begin{align*}
 \nabla b_r  : R \nabla^2 \rho_{s} = (\nabla_\lambda \hat{b}_r) \circ \lambda\, G_T : R (G_T^\top (\nabla^2_{\lambda} \hat{\rho}_s)\circ \lambda \, \, G_T ).
\end{align*} 
Let us define the tensors $\widehat{M} \in\setR^{6 \times 6 \times 2 \times 3 \times 3 \times 3}$ and ${W}_T\in\setR^{ 2 \times 3 \times 3 \times 3} $ by
\begin{align*}
	\widehat{M}(r,s,i,j,k,l) 
	&\coloneqq \dashint_T \big(\partial_{\lambda_k}  (\hat{b}_r)_{i} (
	\partial_{\lambda_j} \partial_{\lambda_l} \hat{\rho}_s)\big)\circ \lambda \dx, \\
	{W}_T(i,j,k,l) 
	&\coloneqq (R G_T^\top)_{i,j}   (G_T G_T^\top)_{k,l}.
\end{align*}
This leads to the identity $(M_T)_{r,s} =\abs{T}\,  \widehat{M}(r,s,:,:,:,:) : {W}_T$.  
In our implementation the element-independent tensor $\widehat{M} = \mathtt{Mhat}$ is precomputed and the computation of $M_T = \mathtt{M\_T}$ is shown in Figure~\ref{fig:MatlabGN}, line~\ref{line:M_T}.

\subsubsection{System matrix}

Since the pressure space is one-dimensional, the contributions $\int_{T} q_1 \divergence b_r \dx$ with $q_1 = 1$ are represented by a vector $B_T \in \setR^{12}$ with entries 
\begin{align*}
	(B_T)_{r} = \int_T \divergence b_r(x) \dx\qquad\text{for }r = 1, \dots, 12. 
\end{align*}
It follows by $\divergence \bfcurl \rho = 0$ that $ 
(B_T)_{r} = 0$ for all $r = 7, \dots,12$. 
Due to the definition of the basis in~\eqref{eq:DefBasisGN} for $j=1,2,3$ we have that 
\begin{equation*}
\divergence b_j = \partial_x \lambda_{j-1}  \qquad\text{and}\qquad \divergence b_{3+j} = \partial_y \lambda_{j-1}.
\end{equation*}
In particular, we obtain
\begin{align*}
(B_T)_{r} =  
\begin{cases}
\abs{T}\, \partial_x \lambda_{r-1} & \text{for } r = 1, \dots, 3, \\
\abs{T}\, \partial_y \lambda_{r-4} & \text{for } r = 4, \dots, 6, \\
0&\text{for } r = 7, \dots, 12.
\end{cases}
\end{align*}
This leads to the computation of $(B_T)_{r=1}^6 = \mathtt{B\_T}$ as in line~\ref{line:B_T} in Figure~\ref{fig:MatlabGN}.
\subsubsection{Local basis evaluation at dofs}
Since point evaluations of the basis functions $b_1,\dots,b_6$ at vertices and midpoints of edges are independent of the underlying simplex $T = [v_0,v_1,v_2]$, we obtain the left side $\mathtt{V\_left} = (\Vander_{r,s})_{r=1,\dots,12}^{s=1,\dots,6} \in \mathbb{R}^{12\times 6}$ of the generalized Vandermonde matrix by multiplying precomputed point evaluations with the corresponding normal and tangential vectors, see Figure~\ref{fig:MatlabGN}, line~\ref{line:Psi_left}.

To obtain the remaining part $\mathtt{V\_right} = (\Vander_{r,s})_{r=1,\dots,12}^{s=7,\dots,12}$ of the Vandermonde matrix, we apply the considerations in  Section~\ref{sec:impl-Zien-locbas} to compute in line~\ref{line:TgvGN}--\ref{line:TgvAuxGN} in Figure~\ref{fig:MatlabGN} the tensors $\mathtt{Tgv},\mathtt{Tge}\in \mathbb{R}^{2\times3\times 6}$ with entries  
\begin{equation*}
   \mathtt{Tgv}(:,j,s) = \nabla \rho_s(v_{j-1})\quad\text{and}\quad  \mathtt{Tge}(:,j,s)= \nabla \rho_s(\textup{mid}(\edge_{j-1})),
\end{equation*}
for all $j=1,2,3$  and $s=1,\dots,6$. 
While the values in $\mathtt{Tgv}$ can be directly added to the Vandermonde matrix $V$, one has to rotate and multiply the values in $\mathtt{Tge}$ by the normal and tangential vectors as done in line~\ref{line:Tge_nt} in Figure~\ref{fig:MatlabGN}. 
Combining the values leads to the matrix $\Vander = \mathtt{V}$ in line~\ref{line:Vander} in Figure~\ref{fig:MatlabGN}.

\subsubsection{Assembly of the right-hand side}
Let $\mathcal{P}_m(T)$ be the polynomial space of maximal degree $m\in \mathbb{N}_0$ with Lagrange basis $\{\varphi_1,\dots,\varphi_J\}$.
In lines~\ref{line:f_loc2}--\ref{line:f_loc2End} of Figure~\ref{fig:MatlabGN} the right-hand side is assembled. 
The evaluation of the local components $\int_T\mathcal{I} f b_j \dx$ for $j=1,\dots,6$ with nodal interpolation operator $\mathcal{I}\colon C^0(T) \to \mathcal{P}_m(T)$ is described in Section~\ref{subsubsec:Rhs}. To evaluate the local components  $\int_T \mathcal{I} f \bfcurl \rho_j \dx$ for $j=1,\dots,6$, the code precomputes the tensor $\mathtt{bhat2}\in \mathbb{R}^{J \times 3 \times 6}$ containing the values
\begin{equation*}
\mathtt{bhat2}(j,k,r) = \dashint_T (\hat{\phi}_j \partial_{\lambda_k} \hat{\rho}_r) \circ \lambda\dx \quad \text{for all } j=1,\dots,J, \ k=1,\dots,3,\ r=1,\dots,6.
\end{equation*}
Exploiting the representation of $\nabla \rho_s = \nabla \lambda (\nabla_\lambda \hat{\rho}_s) \circ \lambda$ in~\eqref{eq:trafogradx}, we obtain in line~\ref{line:b2} in Figure~\ref{fig:MatlabGN} the tensor $\mathtt{b2}$ with entries 
\begin{equation*}
\mathtt{b2}(m,j,r) = \dashint_T \phi_j \partial_{x_m} \rho_r \dx\quad\text{for all }m = 1,2,\ j=1,\dots,J,\ r=1,\dots,6. 
\end{equation*}
With this we evaluate in line~\ref{line:bT2} in Figure~\ref{fig:MatlabGN} the contributions
\begin{equation*}
\mathtt{b\_T2}(r) = \dashint_T \mathcal{I} f\,  \bfcurl \rho_r \dx = \dashint_T \mathcal{I}f\,  b_{6+r} \dx\qquad\text{for all }r = 1,\dots,6.
\end{equation*}
These contributions are added to obtain the global vector $\mathtt{b}$.
\subsubsection{Reduced \Guzman{}--Neilan element}
To obtain the reduced \Guzman--Neilan element described in Lemma~\ref{lem:GNred}, we have to correct the local basis $b_1,\dots,b_9$ by the rational bubble functions $b_{10} = \bfcurl B_{\edge_0}$, $b_{11} = \bfcurl B_{\edge_1}$, $b_{12}=\bfcurl B_{\edge_2}$ such that the resulting functions $\bar{b} \in \Vred(T)$ satisfy
\begin{equation}\label{eq:CondGNref}
\bar{b}|_{\edge_j} \cdot \tau_{\edge_j} \in \mathcal{P}_1(f_j)\qquad\text{for all }j=0,1,2.
\end{equation}
This is trivially satisfied for the affine basis functions $b_1 = \bar{b}_1,\dots,b_6 = \bar{b}_6 \in \mathcal{P}_1(T)^2$. For the corrections $\bar{b}_k = b_k - \sum_{\ell=1}^3 \gamma_\ell b_{9+\ell}$ with $k=7,8,9$ and coefficients $\gamma_1,\gamma_2,\gamma_3\in \mathbb{R}$ of the remaining basis functions 
we exploit the identity
\begin{align*}
 \bar{b}_k|_{\edge_j} \cdot \tau_{\edge_j} & = \bfcurl \Big( \rho_{k-6} - \sum_{\ell=1}^3 \gamma_\ell \rho_{3+\ell} \Big)\Big|_{\edge_j} \cdot \tau_{\edge_j}  = \bfcurl \big( \rho_{k-6} - \gamma_j B_{\edge_j} \big)\big|_{\edge_j} \cdot \tau_{\edge_j}. 
\end{align*}
This allows us to compute the coefficients $\gamma_j$ similarly as in  Section~\ref{subsec:RedZienkiewicz}, which leads to the additional code displayed in Figure~\ref{fig:MatlabRoutineGNreduced}.

\begin{figure}
\begin{lstlisting}[language=Matlab,escapechar=|]
gam0=(V(10,7:9)-tangents(1,:)/2*(V([2,5],7:9)+V([3,6],7:9)))/V(10,10);
gam1=(V(11,7:9)-tangents(2,:)/2*(V([3,6],7:9)+V([1,4],7:9)))/V(11,11);
gam2=(V(12,7:9)-tangents(3,:)/2*(V([1,4],7:9)+V([2,5],7:9)))/V(12,12);
RedBasis = [eye(9);zeros(3,6),-[gam0;gam1;gam2]];
Basis4elem(:,:,elem) = RedBasis/V(1:9,1:9);
l2g_Dof = [nodes,nrNodes+nodes,2*nrNodes+sides];
\end{lstlisting}
\caption{Modification replacing lines~\ref{line:invVanderGN}--\ref{line:invVanderGN2} in Figure~\ref{fig:MatlabGN} to obtain the reduced \Guzman--Neilan element.}\label{fig:MatlabRoutineGNreduced}
\end{figure}

\section{Numerical experiment}\label{sec:NumExp}

\noindent In this work we have presented an exact numerical integration for rational finite element functions. 
Previously, only inexact quadrature was available. 
In this experiment we investigate the benefits of our exact numerical integration. 

\subsection{Biharmonic eigenvalue problem}\label{subsec:BiHarm}
It is well known that the quadrature error may spoil the approximation order when approximating solutions to PDEs as shown for the linear elliptic PDE $-\divergence( a \nabla u) = f$ for smooth coefficient function $a$ discretized by Lagrange elements, cf.~\cite[Thm.~4.1.6]{Ciarlet2002} and for the related eigenvalue problem~\cite{BanerjeeOsborn90,Banerjee92}. 
The results state that for smooth solutions and Lagrange elements of degree $k$ the order of convergence remains optimal if the quadrature rule is exact for polynomials of maximal degree $2k-2$ for the source problem and of  maximal degree $2k-1$ for the eigenvalue problem. 
For rough coefficients $a$, randomized quadrature rules have been investigated in~\cite{KrusePolydoridesWu19}. 

In the following we numerically study the effect of quadrature errors in the singular \Zienkiewicz FEM for the biharmonic eigenvalue problem: Seek an eigenfunction $\phi\in H^2(\Omega)\setminus \lbrace 0 \rbrace$ with smallest eigenvalue $\lambda > 0$ such that
\begin{equation*}
\begin{aligned}
\Delta^2 \phi &= \lambda \, \phi && \text{in }\Omega,\\
\phi & = \nabla \phi\cdot \nu = 0 &&\text{on }\partial \Omega.
\end{aligned}
\end{equation*} 
The domain $\Omega$ is chosen as the unit square $\Omega = (0,1)^2$ or the L-shaped domain $\Omega = (-1,1)^2\setminus [0,1)^2$. The underlying triangulations are uniform for the square and graded towards the re-entrant corner for the L-shaped domain.
The grading is obtained by the standard AFEM loop with D\"orfler marking and bulk parameter $\theta = 0.5$, where the refinement indicator is for all $T\in \tria$ defined as 
\begin{align*}
\eta^2(T) \coloneqq \lvert \textup{mid}(T)\rvert^{-2} |T|^{5/7}.
\end{align*}%
For the (inexact) quadrature we use Fubini's theorem and 1D Gau\ss{} quadrature, that is,  for a function  $g\colon T \to \mathbb{R}$  with $T = [(0,0)^\top,(1,0)^\top,(0,1)^\top]$ we approximate  the integral with suitable Gau\ss{} points $x_i,y_{i,j}\in T$ and weights $\omega_i,\omega_{i,j}\in \mathbb{R}$ for $i,j=1,\dots,n$ with $n\in \mathbb{N}$ by
\begin{equation}\label{eq:Quad}
\begin{aligned}
\int_T g(x,y)\,\mathrm{d}(x,y) &= \int_{0}^1 \int_{0}^{1-x} g(x,y)  \dy
\dx \approx \sum_{i=1}^n \omega_i \int_{0}^{1-x} g(x_i,y) \dy\\
& \approx \sum_{i=1}^n \omega_i \sum_{j=1}^n \omega_{i,j} g(x_i,y_{i,j}).
\end{aligned}
\end{equation}
Notice that for polynomials $g\in \mathcal{P}_k(T)$ the integrand $\int_0^{1-x} g(x,y) \dy$ is a polynomial in $x$ of maximal degree $k+1$. Thus, exploiting the exactness of  Gau\ss{} quadrature for polynomials with maximal degree $2n-1$ in 1D, this approach is exact for polynomials $g\in \mathcal{P}_{2n-2}(T)$. We use the quadrature in~\eqref{eq:Quad} to approximate the integrands in~\eqref{eq:bilin-T} and the local contributions of the mass matrix 
\begin{equation*}
(M_T)_{r,s} = \int_T b_r b_s \dx \qquad\text{for all }r,s =1,\dots,12.
\end{equation*}
We solve the resulting eigenvalue problem and compare the approximated eigenvalue $\lambda_h$ obtained with exact integration by the implementation presented in Section~\ref{sec:implZienk} and the eigenvalue $\overline{\lambda}_h$ obtained by numerical quadrature~\eqref{eq:Quad} in the computation of the system matrices. The results are displayed in Figure~\ref{fig:ExpSquare}.
\begin{figure}
{\centering
\begin{tikzpicture}
\begin{axis}[
clip=false,
width=.5\textwidth,
height=.45\textwidth,
xmode = log,
ymode = log,
xmax = 6000000,
cycle multi list={\nextlist MyColors},
scale = {1},
xlabel={ndof},
clip = true,
legend cell align=left,
legend style={legend columns=1,legend pos= outer north east,font=\fontsize{7}{5}\selectfont}
]
 \addplot table [x=ndof,y=relatDistToExactQuad] {Experiments/Exp1_Square_n2.txt} node[pos=1.0, right] {\tiny $n=2$};
    \addplot table [x=ndof,y=relatDistToExactQuad] {Experiments/Exp1_Square_n3.txt} node[pos=1.0, right] {\tiny $n=3$};
    \addplot table [x=ndof,y=relatDistToExactQuad] {Experiments/Exp1_Square_n4.txt} node[pos=1.0, right] {\tiny $n=4$};
    \addplot table [x=ndof,y=relatDistToExactQuad] {Experiments/Exp1_Square_n5.txt} node[pos=1.0, right] {\tiny $n=5$};
    \addplot table [x=ndof,y=relatDistToExactQuad] {Experiments/Exp1_Square_n6.txt} node[pos=1.0, right] {\tiny $n=6$};
    \addplot table [x=ndof,y=relatDistToExactQuad] {Experiments/Exp1_Square_n7.txt} node[pos=1.0, right] {\tiny $n=7$};
    \addplot table [x=ndof,y=relatDistToExactQuad] {Experiments/Exp1_Square_n8.txt} node[pos=1.0, right] {\tiny $n=8$};
    \addplot table [x=ndof,y=relatDistToExactQuad] {Experiments/Exp1_Square_n9.txt} node[pos=1.0, right] {\tiny $n=9$};
    \addplot table [x=ndof,y=relatDistToExactQuad] {Experiments/Exp1_Square_n10.txt} node[pos=1.0, right] {\tiny $n=10$};
    \addplot table [x=ndof,y=relatDistToExactQuad] {Experiments/Exp1_Square_n11.txt} node[pos=1.0, right] {\tiny $n=11$};
\end{axis}
\end{tikzpicture}
\begin{tikzpicture}
\begin{axis}[
clip=false,
width=.5\textwidth,
height=.45\textwidth,
xmode = log,
ymode = log,
xmax = 2500000,
cycle multi list={\nextlist MyColors},
scale = {1},
xlabel={ndof},
clip = true,
legend cell align=left,
legend style={legend columns=1,legend pos= south west,font=\fontsize{7}{5}\selectfont}
]
    \addplot[dotted,sharp plot,update limits=false] coordinates {(1e3,2e-2) (1e7,2e-4)};\label{plot:5}
    \addplot[dashed,sharp plot,update limits=false] coordinates {(1e3,1e-5) (1e6,1e-8)};\label{plot:6}
 \addplot table [x=ndof,y=relatDistToExactQuad] {Experiments/Exp2_Lshape_n2} node[pos=1.0, right] {\tiny $n=2$};
    \addplot table [x=ndof,y=relatDistToExactQuad] {Experiments/Exp2_Lshape_n3} node[pos=1.0, right] {\tiny $n=3$};
    \addplot table [x=ndof,y=relatDistToExactQuad] {Experiments/Exp2_Lshape_n4} node[pos=1.0, right] {\tiny $n=4$};
    \addplot table [x=ndof,y=relatDistToExactQuad] {Experiments/Exp2_Lshape_n5} node[pos=1.0, right] {\tiny $n=5$};
    \addplot table [x=ndof,y=relatDistToExactQuad] {Experiments/Exp2_Lshape_n6} node[pos=1.0, right] {\tiny $n=6$};
    \addplot table [x=ndof,y=relatDistToExactQuad] {Experiments/Exp2_Lshape_n7} node[pos=1.0, right] {\tiny $n=7$};
    \addplot table [x=ndof,y=relatDistToExactQuad] {Experiments/Exp2_Lshape_n8} node[pos=1.0, right] {\tiny $n=8$};
    \addplot table [x=ndof,y=relatDistToExactQuad] {Experiments/Exp2_Lshape_n9} node[pos=1.0, right] {\tiny $n=9$};
    \addplot table [x=ndof,y=relatDistToExactQuad] {Experiments/Exp2_Lshape_n10} node[pos=1.0, right] {\tiny $n=10$};
    \addplot table [x=ndof,y=relatDistToExactQuad] {Experiments/Exp2_Lshape_n11} node[pos=1.0, right] {\tiny $n=11$};
    \legend{{$\mathcal{O}(\textup{ndof}^{-1/2})$},{$\mathcal{O}(\textup{ndof}^{-1})$}};	
    \end{axis}
\end{tikzpicture}
}
\caption{Distance $|\lambda_h - \overline{\lambda}_h|/\lambda_h$ on the square with uniform refinement (left) and L-shaped domain with graded mesh (right), where $\overline{\lambda}_h$ resulted from computations with the quadrature rule in~\eqref{eq:Quad} and various $n$.}\label{fig:ExpSquare}
\end{figure}%

Our computations with uniform mesh refinements of the square domain indicate a stagnation of the error as the mesh becomes finer, and hence suggests a failure of convergence. 
This effect is quite strong for small number of Gau\ss{} points $n$, and seems to be less prominent for larger $n$.  
However, it can be observed, that the onset of the stagnation is merely shifted for larger $n$.  

Also for the graded meshes there are some issues. 
While the solutions derived with approximated system matrices seem to exhibit convergence, for small numbers of Gau\ss{} points $n$ one can observe a reduced convergence order $\textup{ndof}^{-1/2}$, instead of $\textup{ndof}^{-1}$ as for larger $n$.   
Notably, the accelerated rate $\textup{ndof}^{-1}$ reflects the convergence of the exact scheme towards the minimal eigenvalue, that is, $\lambda_h - \lambda = \mathcal{O}(\textup{ndof}^{-1})$, cf.~\cite[Sec.~4.3]{CarstensenPuttkammer23}. 
Thus, smaller values of $n$ retard the convergence rate of the numerical scheme, whereas larger  values of $n$ restore the rate to its expected trajectory. 
However, this may well be true only in the pre-asymptotic stage. 
With an increase in the degrees of freedom, the schemes  employing inexact quadrature might still encounter a stagnation, similar to the one  observed for uniform mesh refinement of the unit square domain.

\subsection{Pressure robustness}\label{subsec:ExpPressRob}
The primary advantage of \Guzman{}--Neilan elements over conventional low-order discretizations for fluid problems lies in their exact divergence-free property. Specifically, given the discrete spaces $V$ and $Q$ defined in Lemma~\ref{lem:GN}, we have
\begin{align*}
\left\lbrace v_h \in V\colon \int_\Omega q_h \, \divergence v_h \dx = 0 \text{ for all } q_h \in Q \right\rbrace = \lbrace v_h \in V\colon \divergence v_h = 0 \rbrace.
\end{align*}
However, employing an inexact quadrature compromises this property, as the identity above no longer holds when the integral $\int_\Omega q_h \, \divergence v_h \dx$ is replaced by a numerical approximation. In the remainder of this subsection, we numerically investigate this limitation.
In particular, we solve the following problem from \cite[Sec.~1.1]{John17}. Let $\Omega =(0,1)^2$ be the unit square domain and consider the right-hand side
$f(x,y) \coloneqq (0,100\, (1 - y + 3y^2))^\top$ for all $(x,y)\in \Omega$.
We seek the velocity $u\in H^1_0(\Omega)^2$ and the pressure $p\in L^2_0(\Omega)$ satisfying
\begin{align*}
\begin{aligned}
-\Delta u + \nabla p &= f &&\quad \text{in } \Omega, \\
-\divergence u &= 0 &&\quad \text{in } \Omega.
\end{aligned}
\end{align*}
The exact solution to the Stokes problem is given by $u = 0$ and $p(x,y) = 100\, (y^3 - y^2/2 + y - 7/12)$. 
A pressure-robust numerical scheme, such as the \Guzman{}--Neilan FEM with exact quadrature, reproduces the exact velocity $u_h = u = 0$.
With inexact quadrature such as the one defined in \eqref{eq:Quad} with $n$ 1D Gau\ss{} points this property no longer holds, as illustrated in Figure~\ref{fig:Exp3}. In fact, for a moderate number of Gau\ss{} points, the velocity error in the \Guzman{}--Neilan FEM with inexact quadrature exceeds that of the classical Taylor--Hood FEM \cite{Boffi2013,DieningStornTscherpel22}, which is known to struggle with pressure robustness, cf.~\cite{John17}.
Thus, inexact quadrature undermines the key advantage of the \Guzman{}--Neilan FEM, highlighting the crucial role of exact quadrature in preserving its divergence-free property.
\begin{figure}
\begin{tikzpicture}
\begin{axis}[
clip=false,
width=.5\textwidth,
height=.45\textwidth,
ymode = log,
cycle multi list={\nextlist MyColors},
scale = {1},
xlabel={$n$},
clip = true,
legend cell align=left,
legend style={legend columns=1,legend pos= north east,font=\fontsize{7}{5}\selectfont}
]
 \addplot table [x=nrGpts,y=GradNorm] {Experiments/Exp3_Stokes_red.txt};
  \addplot[dashed,sharp plot,update limits=false] coordinates {(-5,4.410009e-05) (16,4.410009e-05)};\label{line:TH}
    \legend{{$\lVert \nabla (u-u_h) \rVert_{L^2(\Omega)}$}};	
    \end{axis}
\end{tikzpicture}
\caption{Velocity error $\lVert \nabla (u - u_h) \rVert_{L^2(\Omega)}$ for the \Guzman{}--Neilan FEM with uniformly refined triangulation $\tria$ with $\#\tria = 8192$ elements and inexact quadrature in \eqref{eq:Quad} with $n$ 1D Gau\ss{} points.
The dashed line \ref{line:TH} represents the velocity error of the Taylor--Hood FEM.
}\label{fig:Exp3}
\end{figure}
\subsection*{Acknowledgements}
We thank Professor Werner Hoffmann for the elegant proof of Lemmas~\ref{lem:QuadPolynomials} and~\ref{lem:QuadRat-1}. 
The work of the first two authors was supported by the Deutsche Forschungsgemeinschaft (DFG, German Research Foundation) – SFB 1283/2 2021 – 317210226.
 
\printbibliography 

@article {AlbertyCarstensenFunken99,
    AUTHOR = {Alberty, Jochen and Carstensen, Carsten and Funken, Stefan A.},
     TITLE = {Remarks around 50 lines of {M}atlab: short finite element
              implementation},
   JOURNAL = {Numer. Algorithms},
  FJOURNAL = {Numerical Algorithms},
    VOLUME = {20},
      YEAR = {1999},
    NUMBER = {2-3},
     PAGES = {117--137},
      ISSN = {1017-1398,1572-9265},
   MRCLASS = {65N30 (65M60)},
  MRNUMBER = {1709562},
       DOI = {10.1023/A:1019155918070},
       URL = {https://doi.org/10.1023/A:1019155918070},
}

@article {AlbertyCarstensenFunkenKlose02,
    AUTHOR = {Alberty, J. and Carstensen, C. and Funken, S. A. and Klose,
              R.},
     TITLE = {Matlab implementation of the finite element method in
              elasticity},
   JOURNAL = {Computing},
  FJOURNAL = {Computing. Archives for Scientific Computing},
    VOLUME = {69},
      YEAR = {2002},
    NUMBER = {3},
     PAGES = {239--263},
      ISSN = {0010-485X,1436-5057},
   MRCLASS = {74S05 (65N30 74-04)},
  MRNUMBER = {1954562},
       DOI = {10.1007/s00607-002-1459-8},
       URL = {https://doi.org/10.1007/s00607-002-1459-8},
}

@Article{Auricchio2012,
  author     = {Auricchio, F. and Calabro, F. and Hughes, T. J. R. and Reali, A. and Sangalli, G.},
  journal    = {Comput. Methods Appl. Mech. Engrg.},
  title      = {A simple algorithm for obtaining nearly optimal quadrature rules for {NURBS}-based isogeometric analysis},
  year       = {2012},
  issn       = {0045-7825,1879-2138},
  pages      = {15--27},
  volume     = {249/252},
  doi        = {10.1016/j.cma.2012.04.014},
  fjournal   = {Computer Methods in Applied Mechanics and Engineering},
  mrclass    = {65D30},
  mrnumber   = {3003059},
  mrreviewer = {Marija\ P.\ Stani\'{c}},
  url        = {https://doi.org/10.1016/j.cma.2012.04.014},
}

@article {BahriawatiCarstensen05,
    AUTHOR = {Bahriawati, C. and Carstensen, C.},
     TITLE = {Three {MATLAB} implementations of the lowest-order
              {R}aviart-{T}homas {MFEM} with a posteriori error control},
   JOURNAL = {Comput. Methods Appl. Math.},
  FJOURNAL = {Computational Methods in Applied Mathematics},
    VOLUME = {5},
      YEAR = {2005},
    NUMBER = {4},
     PAGES = {333--361},
      ISSN = {1609-4840,1609-9389},
   MRCLASS = {65N30 (65Y15)},
  MRNUMBER = {2194203},
       DOI = {10.2478/cmam-2005-0016},
       URL = {https://doi.org/10.2478/cmam-2005-0016},
}

@article {Banerjee92,
    AUTHOR = {Banerjee, Uday},
     TITLE = {A note on the effect of numerical quadrature in finite element
              eigenvalue approximation},
   JOURNAL = {Numer. Math.},
  FJOURNAL = {Numerische Mathematik},
    VOLUME = {61},
      YEAR = {1992},
    NUMBER = {2},
     PAGES = {145--152},
      ISSN = {0029-599X,0945-3245},
   MRCLASS = {65N25 (65N30)},
  MRNUMBER = {1147574},
       DOI = {10.1007/BF01385502},
       URL = {https://doi.org/10.1007/BF01385502},
}

@article {BanerjeeOsborn90,
    AUTHOR = {Banerjee, Uday and Osborn, John E.},
     TITLE = {Estimation of the effect of numerical integration in finite
              element eigenvalue approximation},
   JOURNAL = {Numer. Math.},
  FJOURNAL = {Numerische Mathematik},
    VOLUME = {56},
      YEAR = {1990},
    NUMBER = {8},
     PAGES = {735--762},
      ISSN = {0029-599X,0945-3245},
   MRCLASS = {65N30 (65D30 65N25)},
  MRNUMBER = {1035176},
MRREVIEWER = {Wolfgang\ Moldenhauer},
       DOI = {10.1007/BF01405286},
       URL = {https://doi.org/10.1007/BF01405286},
}

@book {Bartels15,
    AUTHOR = {Bartels, S\"{o}ren},
     TITLE = {Numerical methods for nonlinear partial differential
              equations},
    SERIES = {Springer Series in Computational Mathematics},
    VOLUME = {47},
 PUBLISHER = {Springer, Cham},
      YEAR = {2015},
     PAGES = {x+393},
      ISBN = {978-3-319-13796-4},
   MRCLASS = {65-01 (35A15 35A35 65Mxx 65Nxx)},
  MRNUMBER = {3309171},
MRREVIEWER = {Karsten\ Urban},
       DOI = {10.1007/978-3-319-13797-1},
       URL = {https://doi.org/10.1007/978-3-319-13797-1},
}

@Book{Boffi2013,
  author    = {Boffi, D. and Brezzi, F. and Fortin, M.},
  publisher = {Springer, Heidelberg},
  title     = {Mixed Finite Element Methods and Applications},
  year      = {2013},
  isbn      = {978-3-642-36518-8},
  series    = {Springer Series in Computational Mathematics},
  volume    = {44},
  comment   = {divergence preserving interpolation Ch. 2: FEM spaces, div-free, change of coordinates (2.1.3), ref.: Ciarlet, mathematical elasticity (I, II), Piola transform Ch. 8: inf-sup stable FEM pairs p.72 L^s_k},
  doi       = {10.1007/978-3-642-36519-5},
  eprint    = {978-3-642-36519-5},
  file      = {:finite_elements/Boffi_2013-MixedAndHybridFEM.pdf:PDF},
  groups    = {_01-time-discrete-implicitly-constituted, _00_thesis, _03_Fortin_TaylorHood, mixed, _06_Stokes-counting, tscherpel:2},
  owner     = {tscherpel},
  pages     = {xiv+685},
  timestamp = {2016.01.27},
  url       = {https://doi.org/10.1007/978-3-642-36519-5},
}

@book {Braess07,
    AUTHOR = {Braess, Dietrich},
     TITLE = {Finite elements},
   EDITION = {Third},
      NOTE = {Theory, fast solvers, and applications in elasticity theory,
              Translated from the German by Larry L. Schumaker},
 PUBLISHER = {Cambridge University Press, Cambridge},
      YEAR = {2007},
     PAGES = {xviii+365},
      ISBN = {978-0-521-70518-9},
   MRCLASS = {65N30 (65-02 74S05)},
  MRNUMBER = {2322235},
       DOI = {10.1017/CBO9780511618635},
       URL = {https://doi.org/10.1017/CBO9780511618635},
}

@Book{BrennerScott08,
  title         = {The Mathematical Theory of Finite Element Methods},
  publisher     = {Springer},
  year          = {2008},
  author        = {Brenner, S. and Scott, R.},
  volume        = {15},
  series        = {Texts in Applied Mathematics},
  edition       = {3rd},
  DOI = {10.1007/978-0-387-75934-0},
}

@Article{CarstensenGallistlHu14,
  author     = {Carstensen, Carsten and Gallistl, Dietmar and Hu, Jun},
  journal    = {Comput. Math. Appl.},
  title      = {A discrete {H}elmholtz decomposition with {M}orley finite element functions and the optimality of adaptive finite element schemes},
  year       = {2014},
  issn       = {0898-1221,1873-7668},
  number     = {12},
  pages      = {2167--2181},
  volume     = {68},
  doi        = {10.1016/j.camwa.2014.07.019},
  fjournal   = {Computers \& Mathematics with Applications. An International Journal},
  mrclass    = {65N30 (65N12 65N50)},
  mrnumber   = {3284267},
  mrreviewer = {Joscha\ Gedicke},
  url        = {https://doi.org/10.1016/j.camwa.2014.07.019},
}

@article {CarstensenHu21,
    AUTHOR = {Carstensen, C. and Hu, J.},
     TITLE = {Hierarchical {A}rgyris finite element method for adaptive and
              multigrid algorithms},
   JOURNAL = {Comput. Methods Appl. Math.},
  FJOURNAL = {Computational Methods in Applied Mathematics},
    VOLUME = {21},
      YEAR = {2021},
    NUMBER = {3},
     PAGES = {529--556},
      ISSN = {1609-4840},
   MRCLASS = {65N30 (65N12 65N15)},
  MRNUMBER = {4279098},
       DOI = {10.1515/cmam-2021-0083},
       URL = {https://doi.org/10.1515/cmam-2021-0083},
}

@article {CarstensenPuttkammer23,
    AUTHOR = {Carstensen, Carsten and Puttkammer, Sophie},
     TITLE = {Direct guaranteed lower eigenvalue bounds with optimal a
              priori convergence rates for the bi-{L}aplacian},
   JOURNAL = {SIAM J. Numer. Anal.},
  FJOURNAL = {SIAM Journal on Numerical Analysis},
    VOLUME = {61},
      YEAR = {2023},
    NUMBER = {2},
     PAGES = {812--836},
      ISSN = {0036-1429,1095-7170},
   MRCLASS = {65N25 (65N15 65N30)},
  MRNUMBER = {4592002},
MRREVIEWER = {Hengguang\ Li},
       DOI = {10.1137/21M139921X},
       URL = {https://doi.org/10.1137/21M139921X},
}

@Book{Ciarlet2002,
  author    = {Ciarlet, P.},
  publisher = {Society for Industrial and Applied Mathematics (SIAM), Philadelphia, PA},
  title     = {{The Finite Element Method for Elliptic Problems}},
  year      = {2002},
  isbn      = {0-89871-514-8},
  note      = {Reprint of the 1978 original [North-Holland, Amsterdam]},
  series    = {Classics in Applied Mathematics},
  volume    = {40},
  doi       = {10.1137/1.9780898719208},
  pages     = {xxviii+530},
  timestamp = {2016.02.23},
  url       = {https://doi.org/10.1137/1.9780898719208},
}

@article {DieningStornTscherpel22,
    AUTHOR = {Diening, L. and Storn, J. and Tscherpel, T.},
     TITLE = {Fortin operator for the {T}aylor-{H}ood element},
   JOURNAL = {Numer. Math.},
     keywords = {Rep},
  FJOURNAL = {Numerische Mathematik},
    VOLUME = {150},
      YEAR = {2022},
    NUMBER = {2},
     PAGES = {671--689},
      ISSN = {0029-599X},
   MRCLASS = {65N30 (65N12 76D07)},
  MRNUMBER = {4382591},
       DOI = {10.1007/s00211-021-01260-1},
       URL = {https://doi.org/10.1007/s00211-021-01260-1},
}

@article{FunkenPraetoriusWissgott11,
url = {https://doi.org/10.2478/cmam-2011-0026},
title = {Efficient implementation of adaptive {P1-FEM} in Matlab},
author = {Stefan Funken and Dirk Praetorius and Philipp Wissgott},
pages = {460--490},
volume = {11},
number = {4},
journal = {Comput. Methods Appl. Math.},
fjournal = {Computational Methods in Applied Mathematics},
doi = {doi:10.2478/cmam-2011-0026},
year = {2011},
}

@article {GalvinLinkeRebholzWilson12,
    AUTHOR = {Galvin, Keith J. and Linke, Alexander and Rebholz, Leo G. and
              Wilson, Nicholas E.},
     TITLE = {Stabilizing poor mass conservation in incompressible flow
              problems with large irrotational forcing and application to
              thermal convection},
   JOURNAL = {Comput. Methods Appl. Mech. Engrg.},
  FJOURNAL = {Computer Methods in Applied Mechanics and Engineering},
    VOLUME = {237/240},
      YEAR = {2012},
     PAGES = {166--176},
      ISSN = {0045-7825,1879-2138},
   MRCLASS = {76D05 (76M10)},
  MRNUMBER = {2947652},
       DOI = {10.1016/j.cma.2012.05.008},
       URL = {https://doi.org/10.1016/j.cma.2012.05.008},
}

@article {GrundmannMoeller78,
    AUTHOR = {Grundmann, Axel and M\"oller, H. M.},
     TITLE = {Invariant integration formulas for the {$n$}-simplex by
              combinatorial methods},
   JOURNAL = {SIAM J. Numer. Anal.},
  FJOURNAL = {SIAM Journal on Numerical Analysis},
    VOLUME = {15},
      YEAR = {1978},
    NUMBER = {2},
     PAGES = {282--290},
      ISSN = {0036-1429},
   MRCLASS = {41A55 (65D30)},
  MRNUMBER = {488881},
       DOI = {10.1137/0715019},
       URL = {https://doi.org/10.1137/0715019},
}

@article {GuzmanNeilan2014,
    AUTHOR = {Guzm\'{a}n, Johnny and Neilan, Michael},
     TITLE = {Conforming and divergence-free {S}tokes elements on general
              triangular meshes},
   JOURNAL = {Math. Comp.},
  FJOURNAL = {Mathematics of Computation},
    VOLUME = {83},
      YEAR = {2014},
    NUMBER = {285},
     PAGES = {15--36},
      ISSN = {0025-5718,1088-6842},
   MRCLASS = {65N30 (65N12 76D07 76M10)},
  MRNUMBER = {3120580},
MRREVIEWER = {Marius\ Ghergu},
       DOI = {10.1090/S0025-5718-2013-02753-6},
       URL = {https://doi.org/10.1090/S0025-5718-2013-02753-6},
}

@article {GuzmanNeilan2014-3D,
    AUTHOR = {Guzm\'{a}n, Johnny and Neilan, Michael},
     TITLE = {Conforming and divergence-free {S}tokes elements in three
              dimensions},
   JOURNAL = {IMA J. Numer. Anal.},
  FJOURNAL = {IMA Journal of Numerical Analysis},
    VOLUME = {34},
      YEAR = {2014},
    NUMBER = {4},
     PAGES = {1489--1508},
      ISSN = {0272-4979},
   MRCLASS = {65N30 (65N12 76D07)},
  MRNUMBER = {3269433},
MRREVIEWER = {David Maltese},
       DOI = {10.1093/imanum/drt053},
       URL = {https://doi.org/10.1093/imanum/drt053},
}

@incollection {IronsRazzaque72,
    AUTHOR = {Irons, Bruce M. and Razzaque, Abdur},
     TITLE = {Experience with the patch test for convergence of finite
              elements},
 BOOKTITLE = {The mathematical foundations of the finite element method with
              applications to partial differential equations ({P}roc.
              {S}ympos., {U}niv. {M}aryland, {B}altimore, {M}d., 1972)},
     PAGES = {557--587},
 PUBLISHER = {Academic Press, New York-London},
      YEAR = {1972},
   MRCLASS = {65N30},
  MRNUMBER = {423839},
MRREVIEWER = {A.\ D.\ Lyashko},
}

@Article{John17,
  author   = {John, Volker and Linke, Alexander and Merdon, Christian and Neilan, Michael and Rebholz, Leo G.},
  journal  = {SIAM Rev.},
  title    = {On the divergence constraint in mixed finite element methods for incompressible flows},
  year     = {2017},
  issn     = {0036-1445},
  number   = {3},
  pages    = {492--544},
  volume   = {59},
  doi      = {10.1137/15M1047696},
  fjournal = {SIAM Review},
  url      = {https://doi.org/10.1137/15M1047696},
}

@misc{KrusePolydoridesWu19,
      title={Application of Randomized Quadrature Formulas to the Finite Element Method for Elliptic Equations}, 
      author={Raphael Kruse and Nick Polydorides and Yue Wu},
      year={2019},
      eprint={1908.08901},
      archivePrefix={arXiv},
      primaryClass={math.NA},
      url={https://arxiv.org/abs/1908.08901}, 
}

@article {LedererLinkeMerdonSchoeberl17,
    AUTHOR = {Lederer, Philip L. and Linke, Alexander and Merdon, Christian
              and Sch\"{o}berl, Joachim},
     TITLE = {Divergence-free reconstruction operators for pressure-robust
              {S}tokes discretizations with continuous pressure finite
              elements},
   JOURNAL = {SIAM J. Numer. Anal.},
  FJOURNAL = {SIAM Journal on Numerical Analysis},
    VOLUME = {55},
      YEAR = {2017},
    NUMBER = {3},
     PAGES = {1291--1314},
      ISSN = {0036-1429,1095-7170},
   MRCLASS = {65N30 (65N12 76D05 76D07 76M10)},
  MRNUMBER = {3656505},
MRREVIEWER = {Manfred\ Dobrowolski},
       DOI = {10.1137/16M1089964},
       URL = {https://doi.org/10.1137/16M1089964},
}

@article {KreuzerVerfuerthZanotti21,
    AUTHOR = {Kreuzer, Christian and Verf\"{u}rth, R\"{u}diger and Zanotti,
              Pietro},
     TITLE = {Quasi-optimal and pressure robust discretizations of the
              {S}tokes equations by moment- and divergence-preserving
              operators},
   JOURNAL = {Comput. Methods Appl. Math.},
  FJOURNAL = {Computational Methods in Applied Mathematics},
    VOLUME = {21},
      YEAR = {2021},
    NUMBER = {2},
     PAGES = {423--443},
      ISSN = {1609-4840,1609-9389},
   MRCLASS = {65N30 (35Q30 65N12 65N15 76D07)},
  MRNUMBER = {4235807},
       DOI = {10.1515/cmam-2020-0023},
       URL = {https://doi.org/10.1515/cmam-2020-0023},
}

@article {Linke09,
    AUTHOR = {Linke, Alexander},
     TITLE = {Collision in a cross-shaped domain---a steady 2d
              {N}avier-{S}tokes example demonstrating the importance of mass
              conservation in {CFD}},
   JOURNAL = {Comput. Methods Appl. Mech. Engrg.},
  FJOURNAL = {Computer Methods in Applied Mechanics and Engineering},
    VOLUME = {198},
      YEAR = {2009},
    NUMBER = {41-44},
     PAGES = {3278--3286},
      ISSN = {0045-7825,1879-2138},
   MRCLASS = {76D05 (65N30)},
  MRNUMBER = {2571343},
       DOI = {10.1016/j.cma.2009.06.016},
       URL = {https://doi.org/10.1016/j.cma.2009.06.016},
}

@Article{Schneier2015,
  author   = {Schneier, M.},
  journal  = {Involve},
  title    = {Numerical integration of rational bubble functions with multiple singularities},
  year     = {2015},
  issn     = {1944-4176},
  number   = {2},
  pages    = {233--251},
  volume   = {8},
  comment  = {quadrature, exactly divergence-free GN element, 2D},
  doi      = {10.2140/involve.2015.8.233},
  file     = {:finite_elements/Schneier_2015_I8-NumericalIntegration.pdf:PDF},
  fjournal = {Involve. A Journal of Mathematics},
  url      = {https://doi.org/10.2140/involve.2015.8.233},
}

@article {ScottVogelius85,
    AUTHOR = {Scott, L. R. and Vogelius, M.},
     TITLE = {Norm estimates for a maximal right inverse of the divergence
              operator in spaces of piecewise polynomials},
   JOURNAL = {RAIRO Mod\'{e}l. Math. Anal. Num\'{e}r.},
  FJOURNAL = {RAIRO Mod\'{e}lisation Math\'{e}matique et Analyse
              Num\'{e}rique},
    VOLUME = {19},
      YEAR = {1985},
    NUMBER = {1},
     PAGES = {111--143},
      ISSN = {0764-583X,1290-3841},
   MRCLASS = {65N30},
  MRNUMBER = {813691},
       DOI = {10.1051/m2an/1985190101111},
       URL = {https://doi.org/10.1051/m2an/1985190101111},
}

@article {Shi84,
    AUTHOR = {Shi, Zhong Ci},
     TITLE = {The generalized patch test for {Z}ienkiewicz's triangles},
   JOURNAL = {J. Comput. Math.},
  FJOURNAL = {Journal of Computational Mathematics. An International Journal
              on Numerical Methods, Analysis and Applications},
    VOLUME = {2},
      YEAR = {1984},
    NUMBER = {3},
     PAGES = {279--286},
      ISSN = {0254-9409},
   MRCLASS = {73K25},
  MRNUMBER = {815422},
}

@Article{Wang2007,
  author     = {Wang, Ming and Shi, Zhong-ci and Xu, Jinchao},
  journal    = {Numer. Math.},
  title      = {A new class of {Z}ienkiewicz-type non-conforming element in any dimensions},
  year       = {2007},
  issn       = {0029-599X,0945-3245},
  number     = {2},
  pages      = {335--347},
  volume     = {106},
  doi        = {10.1007/s00211-007-0063-4},
  fjournal   = {Numerische Mathematik},
  mrclass    = {65N30},
  mrnumber   = {2291941},
  mrreviewer = {Stephen\ W.\ Brady},
  url        = {https://doi.org/10.1007/s00211-007-0063-4},
}

@Book{AbramowitzStegun1964,
  author     = {Abramowitz, Milton and Stegun, Irene A.},
  publisher  = {U. S. Government Printing Office, Washington, DC},
  title      = {Handbook of mathematical functions with formulas, graphs, and mathematical tables},
  year       = {1964},
  note       = {For sale by the Superintendent of Documents},
  series     = {National Bureau of Standards Applied Mathematics Series},
  volume     = {No. 55},
  comment    = {Hurwicz zeta function},
  mrclass    = {33.00 (65.05)},
  mrnumber   = {167642},
  mrreviewer = {D.\ H.\ Lehmer},
  pages      = {xiv+1046},
}

@Book{Stroud1971,
  author     = {Stroud, A. H.},
  publisher  = {Prentice-Hall, Inc., Englewood Cliffs, NJ},
  title      = {Approximate calculation of multiple integrals},
  year       = {1971},
  series     = {Prentice-Hall Series in Automatic Computation},
  mrclass    = {65D30},
  mrnumber   = {327006},
  mrreviewer = {R.\ D.\ Riess},
  pages      = {xiii+431},
}

@Article{DoelzHarbrechtKurzEtAl2020,
  author   = {J. Dölz and H. Harbrecht and S. Kurz and M. Multerer and S. Schöps and F. Wolf},
  journal  = {SoftwareX},
  title    = {Bembel: The fast isogeometric boundary element C++ library for Laplace, Helmholtz, and electric wave equation},
  year     = {2020},
  issn     = {2352-7110},
  pages    = {100476},
  volume   = {11},
  abstract = {In this article, we present Bembel, the C++ library featuring higher order isogeometric Galerkin boundary element methods for Laplace, Helmholtz, and Maxwell problems. Bembel is compatible with geometries from the Octave NURBS package, and provides an interface to the Eigen template library for linear algebra operations. For computational efficiency, it applies an embedded fast multipole method tailored to the isogeometric analysis framework and a parallel matrix assembly based on OpenMP.},
  doi      = {https://doi.org/10.1016/j.softx.2020.100476},
  file     = {:/home/tscherpel/HESSENBOX-DA/_research/Reading/Code/Doelz_2020_SX-BEMBL.pdf:PDF},
  keywords = {BEM, IGA, Laplace, Helmholtz, Maxwell, C++, FMM, -matrix},
  url      = {https://www.sciencedirect.com/science/article/pii/S2352711019301797},
}

@Article{ScottTscherpel2024,
  author   = {Scott, L. Ridgway and Tscherpel, Tabea},
  journal  = {SIAM J. Sci. Comput.},
  title    = {Dimensions of exactly divergence-free finite element spaces in 3{D}},
  year     = {2024},
  issn     = {1064-8275,1095-7197},
  number   = {2},
  pages    = {A1102--A1131},
  volume   = {46},
  doi      = {10.1137/22M1544579},
  fjournal = {SIAM Journal on Scientific Computing},
  mrclass  = {65N30 (65N50)},
  mrnumber = {4726004},
  url      = {https://doi.org/10.1137/22M1544579},
}

@InProceedings{Gautschi2001,
  author     = {Gautschi, Walter},
  booktitle  = {Proceedings of the {F}ifth {I}nternational {S}ymposium on {O}rthogonal {P}olynomials, {S}pecial {F}unctions and their {A}pplications ({P}atras, 1999)},
  title      = {The use of rational functions in numerical quadrature},
  year       = {2001},
  number     = {1-2},
  pages      = {111--126},
  volume     = {133},
  doi        = {10.1016/S0377-0427(00)00637-3},
  fjournal   = {Journal of Computational and Applied Mathematics},
  issn       = {0377-0427,1879-1778},
  journal    = {J. Comput. Appl. Math.},
  mrclass    = {65D30 (33F05 41A55)},
  mrnumber   = {1858272},
  mrreviewer = {Stephen\ W.\ Brady},
  url        = {https://doi.org/10.1016/S0377-0427(00)00637-3},
}

@InCollection{BultheelGonzalezVeraHendriksenEtAl2001,
  author     = {Bultheel, A. and Gonz\'alez-Vera, P. and Hendriksen, E. and Nj\aa stad, Olav},
  title      = {Quadrature and orthogonal rational functions},
  year       = {2001},
  note       = {Numerical analysis 2000, Vol. V, Quadrature and orthogonal polynomials},
  number     = {1-2},
  pages      = {67--91},
  volume     = {127},
  doi        = {10.1016/S0377-0427(00)00493-3},
  fjournal   = {Journal of Computational and Applied Mathematics},
  issn       = {0377-0427,1879-1778},
  journal    = {J. Comput. Appl. Math.},
  mrclass    = {65D30 (41A21)},
  mrnumber   = {1808569},
  mrreviewer = {M.\ V.\ Noskov},
  url        = {https://doi.org/10.1016/S0377-0427(00)00493-3},
}

@InCollection{Gautschi1993,
  author     = {Gautschi, Walter},
  booktitle  = {Numerical integration, {IV} ({O}berwolfach, 1992)},
  publisher  = {Birkh\"auser, Basel},
  title      = {Gauss-type quadrature rules for rational functions},
  year       = {1993},
  isbn       = {3-7643-2922-X},
  pages      = {111--130},
  series     = {Internat. Ser. Numer. Math.},
  volume     = {112},
  doi        = {10.1007/978-3-0348-6338-4\_9},
  mrclass    = {65D32},
  mrnumber   = {1248398},
  mrreviewer = {John\ P.\ Coleman},
  url        = {https://doi.org/10.1007/978-3-0348-6338-4_9},
}

@Article{DeckersBultheelPerdomoPio2011,
  author     = {Deckers, Karl and Bultheel, Adhemar and Perdomo-P\'io, Francisco},
  journal    = {Jaen J. Approx.},
  title      = {Rational {G}auss-{R}adau and rational {S}zeg\H o-{L}obatto quadrature on the interval and the unit circle respectively},
  year       = {2011},
  issn       = {1889-3066,1989-7251},
  number     = {1},
  pages      = {15--66},
  volume     = {3},
  fjournal   = {Jaen Journal on Approximation},
  mrclass    = {41A55 (41A20 42C05 65D32)},
  mrnumber   = {2961647},
  mrreviewer = {Ralitza\ Kovacheva},
}

@Article{TrefethenNakatsukasaWeideman2021,
  author     = {Trefethen, Lloyd N. and Nakatsukasa, Yuji and Weideman, J. A. C.},
  journal    = {Numer. Math.},
  title      = {Exponential node clustering at singularities for rational approximation, quadrature, and {PDE}s},
  year       = {2021},
  issn       = {0029-599X,0945-3245},
  number     = {1},
  pages      = {227--254},
  volume     = {147},
  doi        = {10.1007/s00211-020-01168-2},
  fjournal   = {Numerische Mathematik},
  mrclass    = {41A20 (65D32 65N35)},
  mrnumber   = {4207522},
  mrreviewer = {Kanstantin\ Smatrytski},
  url        = {https://doi.org/10.1007/s00211-020-01168-2},
}

@Article{BoulleHerremansHuybrechs2024,
  author   = {Boull\'e, Nicolas and Herremans, Astrid and Huybrechs, Daan},
  journal  = {SIAM J. Sci. Comput.},
  title    = {Multivariate {R}ational {A}pproximation of {F}unctions with {C}urves of {S}ingularities},
  year     = {2024},
  issn     = {1064-8275,1095-7197},
  number   = {6},
  pages    = {A3401--A3426},
  volume   = {46},
  doi      = {10.1137/23M1626414},
  fjournal = {SIAM Journal on Scientific Computing},
  mrclass  = {99-06},
  mrnumber = {4817670},
  url      = {https://doi.org/10.1137/23M1626414},
}

@Article{DeVoreYu1986,
  author     = {DeVore, Ronald A. and Yu, Xiang Ming},
  journal    = {Trans. Amer. Math. Soc.},
  title      = {Multivariate rational approximation},
  year       = {1986},
  issn       = {0002-9947,1088-6850},
  number     = {1},
  pages      = {161--169},
  volume     = {293},
  doi        = {10.2307/2000277},
  fjournal   = {Transactions of the American Mathematical Society},
  mrclass    = {41A20 (41A63)},
  mrnumber   = {814918},
  mrreviewer = {Christian\ Coatm\'elec},
  url        = {https://doi.org/10.2307/2000277},
}

@Article{Newman1964,
  author     = {Newman, D. J.},
  journal    = {Michigan Math. J.},
  title      = {Rational approximation to {$| x| $}},
  year       = {1964},
  issn       = {0026-2285,1945-2365},
  pages      = {11--14},
  volume     = {11},
  fjournal   = {Michigan Mathematical Journal},
  mrclass    = {41.17},
  mrnumber   = {171113},
  mrreviewer = {D.\ S.\ Greenstein},
  url        = {http://projecteuclid.org/euclid.mmj/1028999029},
}

@Article{AustinKrishnamoorthyLeyfferEtAl2021,
  author   = {Austin, Anthony P. and Krishnamoorthy, Mohan and Leyffer, Sven and Mrenna, Stephen and M\"uller, Juliane and Schulz, Holger},
  journal  = {Comput. Phys. Commun.},
  title    = {Practical algorithms for multivariate rational approximation},
  year     = {2021},
  issn     = {0010-4655,1879-2944},
  pages    = {Paper No. 107663, 17},
  volume   = {261},
  doi      = {10.1016/j.cpc.2020.107663},
  fjournal = {Computer Physics Communications},
  mrclass  = {65D15 (41A20 41A63 81-08)},
  mrnumber = {4210943},
  url      = {https://doi.org/10.1016/j.cpc.2020.107663},
}

@InCollection{ManniSpeleers2016,
  author    = {Manni, Carla and Speleers, Hendrik},
  booktitle = {Isogeometric analysis: a new paradigm in the numerical approximation of {PDE}s},
  publisher = {Springer, [Cham]},
  title     = {Standard and non-standard {CAGD} tools for isogeometric analysis: a tutorial},
  year      = {2016},
  isbn      = {978-3-319-42308-1},
  pages     = {1--69},
  series    = {Lecture Notes in Math.},
  volume    = {2161},
  mrclass   = {65D07 (65D17 65N30)},
  mrnumber  = {3586483},
}

@Article{FabienGuzmanNeilanEtAl2022,
  author   = {Fabien, Maurice and Guzm\'an, Johnny and Neilan, Michael and Zytoon, Ahmed},
  journal  = {Comput. Methods Appl. Mech. Engrg.},
  title    = {Low-order divergence-free approximations for the {S}tokes problem on {W}orsey-{F}arin and {P}owell-{S}abin splits},
  year     = {2022},
  issn     = {0045-7825,1879-2138},
  pages    = {Paper No. 114444, 21},
  volume   = {390},
  doi      = {10.1016/j.cma.2021.114444},
  fjournal = {Computer Methods in Applied Mechanics and Engineering},
  mrclass  = {65M60 (76D07)},
  mrnumber = {4357311},
  url      = {https://doi.org/10.1016/j.cma.2021.114444},
}

@Article{GuzmanNeilan2018,
  author    = {Guzm\'{a}n, J. and Neilan, M.},
  journal   = {SIAM J. Numer. Anal.},
  title     = {inf-sup stable finite elements on barycentric refinements producing divergence-free approximations in arbitrary dimensions},
  year      = {2018},
  issn      = {0036-1429},
  number    = {5},
  pages     = {2826--2844},
  volume    = {56},
  comment   = {divergence-free exactly, general dimension},
  doi       = {10.1137/17M1153467},
  file      = {:fluids/Guzman_2018_SIAMJNA-InfsupStableFiniteElement.pdf:PDF},
  fjournal  = {SIAM Journal on Numerical Analysis},
  owner     = {ttscherpel},
  timestamp = {2020-09-04},
  url       = {https://doi.org/10.1137/17M1153467},
}

@Article{BuffaGantnerGiannelliEtAl2022,
  author   = {Buffa, Annalisa and Gantner, Gregor and Giannelli, Carlotta and Praetorius, Dirk and V\'azquez, Rafael},
  journal  = {Arch. Comput. Methods Eng.},
  title    = {Mathematical foundations of adaptive isogeometric analysis},
  year     = {2022},
  issn     = {1134-3060,1886-1784},
  number   = {7},
  pages    = {4479--4555},
  volume   = {29},
  doi      = {10.1007/s11831-022-09752-5},
  fjournal = {Archives of Computational Methods in Engineering. State of the Art Reviews},
  mrclass  = {65D17 (41A05 65D07 65N12 65N30 65N38 65N50 65Y20)},
  mrnumber = {4507163},
  url      = {https://doi.org/10.1007/s11831-022-09752-5},
}

@Article{HughesCottrellBazilevs2005,
  author   = {Hughes, T. J. R. and Cottrell, J. A. and Bazilevs, Y.},
  journal  = {Comput. Methods Appl. Mech. Engrg.},
  title    = {Isogeometric analysis: {CAD}, finite elements, {NURBS}, exact geometry and mesh refinement},
  year     = {2005},
  issn     = {0045-7825,1879-2138},
  number   = {39-41},
  pages    = {4135--4195},
  volume   = {194},
  doi      = {10.1016/j.cma.2004.10.008},
  fjournal = {Computer Methods in Applied Mechanics and Engineering},
  mrclass  = {65D17 (65N30 74S05)},
  mrnumber = {2152382},
  url      = {https://doi.org/10.1016/j.cma.2004.10.008},
}

@Article{SauterSchwab1997,
  author   = {Sauter, Stefan A. and Schwab, Christoph},
  journal  = {Numer. Math.},
  title    = {Quadrature for {$hp$}-{G}alerkin {BEM} in {${\bf R}^3$}},
  year     = {1997},
  issn     = {0029-599X,0945-3245},
  number   = {2},
  pages    = {211--258},
  volume   = {78},
  doi      = {10.1007/s002110050311},
  fjournal = {Numerische Mathematik},
  mrclass  = {65R20 (45L05 65N38 73V10)},
  mrnumber = {1485998},
  url      = {https://doi.org/10.1007/s002110050311},
}

@Article{MorganScott1975,
  author   = {Morgan, J. and Scott, R.},
  journal  = {Math. Comput.},
  title    = {A nodal basis for {$C^{1}$} piecewise polynomials of degree {$n\geq 5$}},
  year     = {1975},
  issn     = {0378-4754},
  pages    = {736--740},
  volume   = {29},
  comment  = {counting methods, mesh quantities},
  doi      = {10.2307/2005284},
  file     = {:finite_elements/Morgan_1975_MC-ANodalBasisForC1.pdf:PDF},
  fjournal = {Mathematics and Computers in Simulation},
  groups   = {_06_Stokes-counting},
  url      = {http://links.jstor.org/sici?sici=0025-5718(197507)29:131<736:ANBFCP>2.0.CO;2-8&origin=MSN},
}

@Article{FalkNeilan2013,
  author    = {Falk, R. S. and Neilan, M.},
  journal   = {SIAM J. Numer. Anal.},
  title     = {Stokes complexes and the construction of stable finite elements with pointwise mass conservation},
  year      = {2013},
  issn      = {0036-1429},
  number    = {2},
  pages     = {1308--1326},
  volume    = {51},
  comment   = {exactly divergence free ref to Argyris elements in},
  doi       = {10.1137/120888132},
  file      = {:fluids/Falk_2013_SIAMJNA-StokesKomplexes.pdf:PDF},
  fjournal  = {SIAM Journal on Numerical Analysis},
  groups    = {fem, _06_Stokes-counting},
  owner     = {tscherpel},
  timestamp = {2020-08-19},
  url       = {https://doi.org/10.1137/120888132},
}

@Book{Zienkiewicz1971,
  author    = {Zienkiewicz, O. C.},
  publisher = {McGraw-Hill, London-New York-D\"usseldorf},
  title     = {The finite element method in engineering science},
  year      = {1971},
  note      = {The second, expanded and revised, edition of {\it The finite element method in structural and continuum mechanics}},
  mrclass   = {73.65},
  mrnumber  = {315970},
  pages     = {xiv+521},
}

@Article{ArgyrisFriedScharpf1968,
  author    = {Argyris, J. H. and Fried, I. and Scharpf, D. W.},
  journal   = {The Aeronautical Journal},
  title     = {The TUBA Family of Plate Elements for the Matrix Displacement Method},
  year      = {1968},
  issn      = {2059-6464},
  month     = aug,
  number    = {692},
  pages     = {701--709},
  volume    = {72},
  doi       = {10.1017/s000192400008489x},
  publisher = {Cambridge University Press (CUP)},
}

@Software{Matlab.2024,
  address   = {Natick, Mas\-sa\-chu\-setts, United States},
  author    = {{The MathWorks Inc.}},
  shortauthor = {MathWorks},
  publisher = {The MathWorks Inc.},
  title     = {MATLAB version: 24.2.0.2712019 (R2024b)},
  url       = {https://www.mathworks.com},
  year      = {2024},
}

@misc{Code,
  title = {Code - Exact Integration for singular {Z}ienkiewicz and {G}uzm{\'a}n---{N}eilan finite elements},
  author = {Lars Diening and Johannes Storn and Tabea Tscherpel},
  doi = {10.24433/CO.0647291.v3}, 
  howpublished = {\url{https://www.codeocean.com/}},
  year = 2024,
  month = {11},
  version = {v3}
}
\end{document}